\newtheorem{theorem}{Theorem}[section]
\newtheorem{corollary}[theorem]{Corollary}
\newtheorem{proposition}[theorem]{Proposition}
\newtheorem{lemma}[theorem]{Lemma}
\numberwithin{equation}{section}
\theoremstyle{definition}
\newtheorem{definition}[theorem]{Definition}
\theoremstyle{remark}
\newtheorem{remark}[theorem]{Remark}
\newtheorem{remarks}[theorem]{Remarks}
\newtheorem*{remark*}{Remark}
\newcommand{\1}[1]{{\mathbbm{1}\mkern -1.5mu}{\{#1\}}}
\newcommand{\R}{{\mathbb R}}
\newcommand{\Z}{{\mathbb Z}}
\newcommand{\N}{{\mathbb N}}
\newcommand{\barN}{{\overline \N}}
\newcommand{\ZP}{{\mathbb Z}_+}
\newcommand{\RP}{{\mathbb R}_+}
\DeclareMathOperator{\Exp}{\mathbb{E}}
\let\Pr\relax
\DeclareMathOperator{\Pr}{\mathbb{P}}
\DeclareMathOperator{\bE}{{\mathbf{E}}}
\DeclareMathOperator{\bP}{{\mathbf{P}}}
\DeclareMathOperator{\bVar}{{\mathbf{V}\!ar}}
\DeclareMathOperator{\Var}{\mathbb{V}ar}
\newcommand{\Prf}{{\Pr}^{N,M}_z}
\newcommand{\Expf}{{\Exp}^{N,M}_z}
\newcommand{\Prfo}{{\Pr}^{N,M}_{(1,M)}}
\newcommand{\PrfM}{{\Pr}^{N_M,M}_{z_M}}
\newcommand{\PrfMo}{{\Pr}^{N_M,M}_{(1,M)}}
\newcommand{\ExpfM}{{\Exp}^{N_M,M}_{z_M}}
\newcommand{\ExpfMo}{{\Exp}^{N_M,M}_{(1,M)}}
\newcommand{\VarfMo}{{\Var}^{N_M,M}_{(1,M)}}
\newcommand{\bPo}{\bP_1}
\newcommand{\bEo}{\bE_1}
\newcommand{\tbmo}{\tau^{\mbox{\textup{\tiny BM}}}_1}
\newcommand{\tbmoo}{\tau^{\mbox{\textup{\tiny BM}}}_{0,1}}
\newcommand{\phidm}{\varphi_{\mathrm{DM}}}
\newcommand{\DM}{\mathrm{DM}(1/2)}
\newcommand{\Stable}{\mathrm{S}_+(1/2)}
\newcommand{\eps}{\varepsilon}
\newcommand{\re}{{\mathrm{e}}}
\newcommand{\ud}{{\mathrm d}}
\newcommand{\cC}{{\mathcal C}}
\newcommand{\cE}{{\mathcal E}}
\newcommand{\cF}{{\mathcal F}}
\newcommand{\cI}{{\mathcal I}}
\newcommand{\cS}{{\mathcal S}}
\newcommand{\cT}{{\mathcal T}}
\newcommand{\Mgf}{\psi_M}
\newcommand{\cExp}{{\cE_1}}
\newcommand{\bPhi}{{\overline \Phi}}
\newcommand{\as}{\ \text{a.s.}}
\newcommand{\tod}{\overset{\mathrm{d}}{\longrightarrow}}
\newcommand{\biggmid}{\; \biggl| \;}
\newcommand{\eqd}{\overset{d}{=}}
\newcommand{\bI}{\partial I}
\newcommand{\iI}{I^{{\raisebox{0.3pt}{\scalebox{0.8}{$\circ$}}}}}
\def\namedlabel#1#2{\begingroup  
    (#2)%
    \def\@currentlabel{#2}%
    \phantomsection\label{#1}\endgroup
}
\newlist{myenumi}{enumerate}{10}
\setlist[myenumi]{leftmargin=0pt, labelindent=\parindent, listparindent=\parindent, labelwidth=0pt, itemindent=!, itemsep=1pt, parsep=4pt}
\newlist{thmenumi}{enumerate}{10}
\setlist[thmenumi]{leftmargin=0pt, labelindent=\parindent, listparindent=\parindent, labelwidth=0pt, itemindent=!}
\title{Energy-constrained random walk\\ with boundary replenishment}
\author{Andrew Wade\footnote{\scalebox{0.94}{Department of Mathematical Sciences, Durham University, Durham; \href{mailto:andrew.wade@durham.ac.uk}{\texttt{andrew.wade@durham.ac.uk}}.}}
 \and Michael Grinfeld\footnote{\scalebox{0.94}{Department of Mathematics \& Statistics, University of Strathclyde, Glasgow; \href{mailto:m.grinfeld@strath.ac.uk}{\texttt{m.grinfeld@strath.ac.uk}}.}}}
\date{1 September 2023}
\begin{document}
\maketitle

\begin{abstract}
We study an energy-constrained random walker on a length-$N$ interval of the one-dimensional integer lattice, with boundary reflection.
The walker consumes one unit of energy for every step taken in the interior, and energy is replenished up to a capacity of~$M$
on each boundary visit. 
We establish large $N, M$ distributional asymptotics for the lifetime of the walker, 
i.e., the first time at which the walker runs out of energy while in the interior. 
Three phases are exhibited. 
When $M \ll N^2$ (energy is scarce),
we show that there is an $M$-scale limit distribution related to a Darling--Mandelbrot law,
while when $M \gg N^2$ (energy is plentiful) we show that there is an exponential limit distribution on a stretched-exponential scale.
In the critical case where $M / N^2 \to \rho \in (0,\infty)$, we show that there is an $M$-scale limit
in terms of an infinitely-divisible distribution expressed via certain theta functions.
\end{abstract}

\medskip

\noindent
{\em Key words:} Reflecting random walk; Darling--Mandelbrot distribution; metastability; energy and resource dynamics.

\medskip

\noindent
{\em AMS Subject Classification:} 60J10 (Primary); 60G50, 60J20, 92D40 (Secondary).

\section{Introduction}

This paper is motivated by a number of problems in mathematical ecology.
The motion of individual animals and the space-time statistics of
animal populations are of central interest in ecology, crucial to the
understanding of population structure and dynamics, dispersion
patterns, foraging, herding, territoriality, and other aspects of the
behaviour of animals and their interactions with and responses to
their environment and broader ecosystem~\cite{gb,hjmm,pl,sby,vlrs}.
Movement ultimately bears on large-scale (in space and time)
phenomena, such as biological fitness, 
genetic variability, and inter-species dynamics.

Random walks, diffusions, and related processes have been used for over a century
to model animal movement: see e.g.~\cite{codling,jpe,vlrs} for an
introduction to the extensive literature on these topics. 
Assuming Fickian diffusion leads to
 reaction--diffusion models in which dynamics is driven by smooth
gradients of resource or habitat, such 
as chemotactic or haptotactic dynamics in microbiology~\cite{tw}, 
but fails in several important respects
to reflect the real-world behaviour of animals~\cite{gphm,jpe}. 
Various modelling paradigms attempt to incorporate more realistic aspects of animal behaviour, such
as memory and persistence~\cite{gurarie}, intermittent rest
periods~\cite{tilles}, or anomalous diffusion~\cite{jpe}.

A basic aspect of population dynamics is the flow of energy:
individuals consume resource and subsequently expend energy in somatic
growth, maintenance, reproduction, foraging, and so on; penguins must
balance feeding and swimming, for example~\cite{brfm}.  
The
distribution of scarce food or water in the environment imposes
constraints on animal movement, as is seen, for instance, in flights
of butterflies between flowers, elk movements between feeding craters,
and elephants moving between water sources in the dry
season~\cite{gurarie,wato}. 
An important modelling challenge
is to incorporate resource heterogeneity in space and time, to model the
distribution of food and shelter, consumption of resources, and
seasonality; in bounded domains, one must provide a well-motivated choice of boundary conditions.

Traditionally, mathematical ecology modelling of spatially distributed
processes in bounded domains employs Dirichlet or Neumann boundary conditions, which do
not do justice to the wealth of behaviour that can be exhibited at the
boundary of a domain occupied by an animal population. Clearly,
availability of resources can be different in the interior of the
domain and at the boundary, as can the nature of interactions among
members of the population. It is not impossible that an animal may
want to spend some time staying at, or diffusing along the boundary. An
example of the differences in behaviours, this time of a population of
molecules, is provided in the pregnancy-test model of~\cite{mgm}, where
the species only diffuse in the bulk of the sample, but participate in
reactions with the coating of the vessel where the test takes place. Another area in which reaction--diffusion
systems are applied and in which boundary conditions enter in a crucial way is porous-medium transport~\cite{cr,td}.

There have been proposed random walk models where arrival on the
boundary results in the termination of the walk; see~\cite{bs} for a review. 
In
the present paper we  take a ``dual'' view, in which staying in
the interior of the domain leads ultimately to the demise of the
walker, while arrival at the boundary  provides \emph{viaticum} to replenish the walker's energy and allows the walk to continue.
For example, an island in a lake may support animals that roam the interior but must return to shore to drink. Adjacent models 
include
the ``mortal random walks'' of~\cite{baak}
and the ``starving random walks'' of~\cite{ykr}, but in neither of these does the walker carry an internal energy state.
Our model differs from models that incorporate
resource depletion by feeding~\cite{bl,bbkr,bcr,cbr,grebenkov} in that, for us, energy replenishment only occurs on the boundary
and the resource is inexhaustible.
Comparison of our
results to those for models including resource depletion, in which the
domain in which the walk is in danger of extinction grows over time, is a topic we hope to address in future work.

Section~\ref{sec:results} describes our mathematical set-up and our main results, starting, in Section~\ref{sec:general-model}
with an introduction of a class of Markov models of energy-constrained random walks with boundary replenishment.

\section{Model and main results}
\label{sec:results}

\subsection{Energy-constrained random walk}
\label{sec:general-model}

For $N \in \N := \{1,2,3,\ldots\}$, denote the finite discrete interval $I_N := \{ x \in \Z : 0 \leq x \leq N \}$.
Also define the semi-infinite interval by $I_\infty := \ZP := \{ x \in \Z : x \geq 0 \}$.
We write $\barN := \N \cup \{ \infty \}$ and then $I_N, N \in \barN$ includes both finite and infinite cases.
The boundary $\bI_N$ of $I_N$ is defined as $\bI_N := \{0,N\}$ for $N \in \N$,
and $\bI_\infty := \{0\}$ for $N = \infty$; the interior is $\iI_N := I_N \setminus \bI_N$.
We suppose $N \geq 2$, so that $\iI_N$ is non-empty. Over $\barN$ we define simple arithmetic and function evaluations in the way consistent
with taking limits over $\N$,  e.g., $1/\infty := 0$, $\exp \{ -\infty \} := 0$, and so on.
 
We define a class of discrete-time Markov chains
 $\zeta := (\zeta_0, \zeta_1, \ldots)$, where $\zeta_n := (X_n,\eta_n) \in I_N \times \ZP$,
whose transition law is determined by an \emph{energy update} (stochastic) matrix $P$ over $\ZP$, i.e., a function $P : \ZP^2 \to [0,1]$
with $\sum_{j \in \ZP} P(i,j) = 1$ for all $i \in \ZP$.
The coordinate $X_n$
represents the location of a random walker, and $\eta_n$ its current energy level. 
Informally, the dynamics of the process are as follows. 
As long as it has positive
energy and is in the interior, the walker performs simple random walk steps; each step uses one unit of energy.
If the walker runs out of energy while in the interior, the walk terminates.
If the process is at the boundary $\bI_N$ with current energy level $i$, $P(i,j)$ is the probability that the energy changes to level~$j$; the walk reflects into the interior.

Formally, the transition law is as follows.
\begin{itemize}
\item \emph{Energy-consuming random walk in the interior:} If $i \in \N$ and $x \in \iI_N$, then
\begin{equation}
\label{eq:transition-1}
\Pr ( X_{n+1} = X_n + e, \, \eta_{n+1} = \eta_n - 1 \mid X_n = x , \eta_n = i ) = \frac{1}{2}, \text{ for } e \in \{-1,+1\} .\end{equation}
\item \emph{Extinction through exhaustion:} If $x  \in \iI_N$, then
\begin{equation}
\label{eq:transition-2}
 \Pr ( X_{n+1} = x, \, \eta_{n+1} = 0 \mid X_n = x, \eta_n = 0 ) = 1 .\end{equation}
\item \emph{Boundary reflection and energy transition:} If $i \in \ZP$, $x \in \bI_N$, and $y \in \iI_N$ is the unique~$y$
such that $|y-x| = 1$, then
\begin{equation}
\label{eq:transition-3}
 \Pr ( X_{n+1} = y , \, \eta_{n+1} = j \mid X_n = x, \eta_n =i )  = P ( i, j) .
\end{equation}
\end{itemize}

In the present
paper, we focus on a model with \emph{finite energy capacity} and maximal energy replenishment at the boundary. 
This corresponds to a
specific choice of $P$, namely $P(i, M)=1$, as described in the next
section. Other choices of the update matrix $P$ are left for future work.

\subsection{Finite energy capacity}
 \label{sec:finite-capacity}

Our finite-capacity model has a parameter $M \in \N$, representing
the maximum energy capacity of the walker. The boundary energy update rule that we
take is that energy is always replenished up to the maximal level~$M$. Here is the definition.

\begin{definition}
\label{def:finite-capacity}
For $N \in \barN, M \in \N$, and $z \in I_N \times I_M$, 
the finite-capacity $(N,M,z)$-model is the Markov chain~$\zeta$
with initial state $\zeta_0 = z$ and with transition law 
defined through~\eqref{eq:transition-1}--\eqref{eq:transition-3}
with energy update matrix $P$ given by~$P(i,M) =1$ for all $i \in \ZP$.
\end{definition}

For the $(N,M,z)$ model from Definition~\ref{def:finite-capacity}, with $z = (x,y) \in I_N \times I_M$, we write $\Prf$ and $\Expf$ for probability and expectation
under the law of the corresponding Markov chain $\zeta$ with spatial domain~$I_N$, energy capacity~$M$, initial location~$X_0=x \in I_N$,
and initial energy $\eta_0=y \in I_M$.
The main quantity of interest for us here is 
the \emph{total lifetime} (i.e., time before extinction) of the process, defined by
\begin{equation}
\label{eq:def-lifetime}
\lambda := \min \{ n  \in \ZP: X_n \in \iI_N , \, \eta_n = 0 \},
\end{equation} 
where we adopt the usual convention that $\min \emptyset:=+\infty$.
The  process $\zeta \in I_N \times I_M$
can be viewed as a two-dimensional random walk with
a reflecting/absorbing boundary, 
 in which the interior drift (negative in the energy component)
competes against the (positive in energy) boundary reflection: see Figure~\ref{fig:strip} for a schematic.

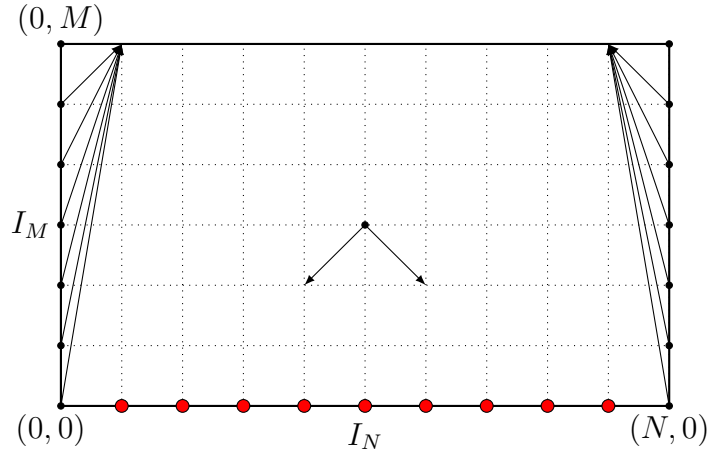
\begin{figure}[!ht]
\begin{center}
\begin{tikzpicture}[domain=0:8, scale = 0.8]
\filldraw (0,0) circle (1.5pt);
\filldraw (10,0) circle (1.5pt);
\node at (10,-0.4) {$(N,0)$};
\node at (0,6.4) {$(0,M)$};
\node at (0,-0.4) {$(0,0)~~$};
\draw[step=1.0,black,dotted,thin,xshift=0.0cm,yshift=0.0cm] (0,0) grid (10,6);
\node at (5, -0.5)       {$I_N$};
\node at (-0.5, 3)       {$I_M$};
\draw[black,thick] (0,0) -- (10,0);
\draw[black,thick] (0,0) -- (0,6);
\draw[black,thick] (10,0) -- (10,6);
\draw[black,thick] (0,6) -- (10,6);
\filldraw (5,3) circle (1.5pt);
\draw[black,->,>=latex] (5,3) -- (6,2);
\draw[black,->,>=latex] (5,3) -- (4,2);
\filldraw[fill=red] (1,0) circle (3pt);
\filldraw[fill=red] (2,0) circle (3pt);
\filldraw[fill=red] (3,0) circle (3pt);
\filldraw[fill=red] (4,0) circle (3pt);
\filldraw[fill=red] (5,0) circle (3pt);
\filldraw[fill=red] (6,0) circle (3pt);
\filldraw[fill=red] (7,0) circle (3pt);
\filldraw[fill=red] (8,0) circle (3pt);
\filldraw[fill=red] (9,0) circle (3pt);
\filldraw (0,1) circle (1.5pt);
\filldraw (0,2) circle (1.5pt);
\filldraw (0,3) circle (1.5pt);
\filldraw (0,4) circle (1.5pt);
\filldraw (0,5) circle (1.5pt);
\filldraw (10,1) circle (1.5pt);
\filldraw (10,2) circle (1.5pt);
\filldraw (10,3) circle (1.5pt);
\filldraw (10,4) circle (1.5pt);
\filldraw (10,5) circle (1.5pt);
\filldraw (10,6) circle (1.5pt);
\filldraw (0,6) circle (1.5pt);
\draw[black,->,>=latex] (0,0) -- (1,6);
\draw[black,->,>=latex] (0,1) -- (1,6);
\draw[black,->,>=latex] (0,2) -- (1,6);
\draw[black,->,>=latex] (0,3) -- (1,6);
\draw[black,->,>=latex] (0,4) -- (1,6);
\draw[black,->,>=latex] (0,5) -- (1,6);
\draw[black,->,>=latex] (10,0) -- (9,6);
\draw[black,->,>=latex] (10,1) -- (9,6);
\draw[black,->,>=latex] (10,2) -- (9,6);
\draw[black,->,>=latex] (10,3) -- (9,6);
\draw[black,->,>=latex] (10,4) -- (9,6);
\draw[black,->,>=latex] (10,5) -- (9,6);
\end{tikzpicture}
\end{center}
\vspace{-4mm}
\caption{\label{fig:strip} Schematic of the transitions for the reflecting random walk $\zeta_n = (X_n, \eta_n)$ in the rectangle $I_N \times I_M$. 
The arrows indicate some possible transitions: in the interior, energy decreases and the particle executes a nearest-neighbour random walk, while the boundary states provide energy replenishment. 
The larger (red-filled) circles indicate the absorbing states (zero energy, but not at the boundary).}
\end{figure}

If for initial state $z = (x,y) \in I_N \times I_M$ it holds that $N > M+x$ (including $N = \infty$), then 
 the energy constraint and the fact that $X_0 =x$ ensures that $X_n$ can never exceed $M+x$, 
so $X_n$ is constrained to the finite interval $I_{M+1+x}$. In other words, every $(N,M,z)$ model with $N > M+x$ is equivalent to the $(\infty,M,z)$ model, for any $y \in I_M$.
Since we start with $\eta_0 = y$ and energy decreases by at most one unit per unit time, $\Prf ( \lambda \geq y ) = 1$.
The following `irreducibility' result,
proved in Section~\ref{sec:proofs:general},
 shows that extinction is certain, i.e., $\Prf (\lambda < \infty ) =1$, provided there are at least two sites in $\iI_N$.

\begin{lemma}
\label{lem:irreducibility}
Suppose that $N \in \barN$ with $N \geq 3$, that $M \in \N$, and that $\zeta_0 = z = (x,y)$.
Under~$\Prf$, the process~$\zeta$ is a time-homogeneous Markov chain on the finite state space
 $\Lambda_{N,M} := I_{N \wedge (M+x)} \times I_M$.
 Moreover, there exists $\delta >0$ (depending only on $M$)
such that $\sup_{z \in I_N \times I_M} \Expf [ \re^{\delta \lambda} ] < \infty$.
\end{lemma}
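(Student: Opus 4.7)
The plan is to verify the two structural assertions directly from the transition description, and then to establish the exponential moment bound by proving a uniform (in the initial state and in~$N$) lower bound on the probability of extinction within a window of at most $M+1$ steps, after which a standard Markov iteration gives a geometric tail for~$\lambda$.

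First, for the state space and Markov assertion: time homogeneity is immediate since the rules~\eqref{eq:transition-1}--\eqref{eq:transition-3} do not involve~$n$, and the update matrix $P(i,M)=1$ is fixed. The inclusion $\eta_n \in I_M$ for all $n$ is clear, since $\eta_0 \in I_M$ and each step either decrements $\eta$ or resets it to~$M$. For the position, I would argue inductively on the successive visits of $X_n$ to $\bI_N$: between any two such visits (including the stretch from time $0$ to the first visit) the walker takes at most $M$ interior steps before either reaching the boundary or being absorbed, and hence cannot move more than $M$ sites from its starting location in that excursion. Since that starting location is either the initial $x$ or a point of $\bI_N \subseteq I_{N\wedge(M+x)}$, one obtains $X_n \leq x+M$ throughout, and combined with the trivial $X_n \in I_N$ this gives $X_n \in I_{N\wedge(M+x)}$.

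The main step is the following uniform estimate: there exists $\alpha = \alpha(M)>0$ such that, for every non-absorbed state $z' \in \Lambda_{N,M}$,
\begin{equation*}
\Pr \bigl( \lambda \leq M+1 \bigmid \zeta_0 = z' \bigr) \geq \alpha.
\end{equation*}
Granted this, the strong Markov property applied at the deterministic times $k(M+1)$, $k \in \ZP$, yields $\Prf (\lambda > k(M+1)) \leq (1-\alpha)^k$, uniformly in $z$ and in~$N$; summing a geometric series then gives $\sup_{z} \Expf[\re^{\delta \lambda}] < \infty$ for any $\delta \in (0, -(M+1)^{-1} \log(1-\alpha))$, and this~$\delta$ depends only on~$M$, as required.

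To prove the estimate, I would construct an explicit ``deadly'' path. If $z' = (x',i')$ with $x' \in \iI_N$ and $i' \geq 1$, the hypothesis $N \geq 3$ ensures that $x'$ has at least one neighbour $x'' \in \iI_N$: take $x'' = x'+1$ unless $x' = N-1$, in which case $x'' = x'-1 \in \iI_N$ because $N-2 \geq 1$. The oscillating trajectory $x', x'', x', x'', \ldots$ of length $i' \leq M$ keeps $X_n \in \iI_N$ while driving~$\eta$ to zero, so the walker is absorbed at step~$i'$; this specific path has probability $2^{-i'} \geq 2^{-M}$. If instead $x' \in \bI_N$, the walker first moves deterministically to $\iI_N$ with $\eta$ refreshed to~$M$, and the previous argument applies with one extra step, so one may take $\alpha = 2^{-M-1}$ to handle both cases. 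I do not anticipate a substantive obstacle; the only mild subtlety is precisely the use of $N \geq 3$ to guarantee that the oscillation partner $x''$ is always interior, and the emphasis that $\alpha$ depends only on $M$, which is what allows the statement to cover the $N = \infty$ case.
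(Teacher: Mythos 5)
Your proposal is correct and follows essentially the same route as the paper: both force the walker, over a window of $M+1$ steps, to remain in the interior (using $N\geq 3$ to guarantee every interior site has an interior neighbour) while its energy runs down to zero, obtaining a uniform lower bound of order $2^{-M}$ on extinction within the window, and then iterate the Markov property at times $k(M+1)$ to get a geometric tail and hence the exponential moment with $\delta$ depending only on $M$. Your explicit two-site oscillation is just a concrete instance of the paper's ``always step to an interior neighbour'' event, and your discussion of the state space (which the paper leaves implicit) is a reasonable supplement.
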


Our main results concern distributional asymptotics for the random variable $\lambda$
as $N, M \to \infty$. We will demonstrate a phase transition depending on the relative growth of $M$ and $N$. Roughly speaking, 
since in time $M$ a simple random walk typically travels distance of order $\sqrt{M}$, it turns out that if $M \gg N^2$ the random walk will visit the
boundary many times, and $\lambda$ grows exponentially in $M/N^2$, 
while if $M \ll N^2$ then there are relatively few (in fact, of order~$\sqrt{M}$)
visits to the boundary before extinction, and $\lambda$ is of order~$M$. The case $M \ll N^2$, where the capacity constraint dominates,
we call the \emph{meagre-capacity} limit, and we treat this case first;
in the limit law appears the relatively unusual \emph{Darling--Mandelbrot} distribution (see Section~\ref{sec:meagre-capacity}). The case $M \gg N^2$, where energy is plentiful,
we call the \emph{confined-space} limit, and (see Section~\ref{sec:confined-space}) the limit law is exponential, as might be expected due to the small rate of extinction in that case.
The critical case, where $M /N^2 \to \rho \in (0,\infty)$ is dealt with in Section~\ref{sec:critical}.
Section~\ref{sec:heuristics} gives an outline, at the level of heuristics, of the proofs.

\subsection{The meagre-capacity limit}
\label{sec:meagre-capacity}

Our first result looks at the case where $N, M$ are both large but $M \in \N$ is small (in a sense we quantify) with respect to $N \in \barN$.
This will include all the $(\infty,M,z)$ models, which, as remarked above,
coincide with the $(N,M,z)$ models for $N > M+x$. 
However, it turns out that the $(\infty,M,z)$ behaviour is also asymptotically replicated in  $(N,M,z)$ models
for $N \gg \sqrt{M}$. The formal statement is Theorem~\ref{thm:meagre-capacity} below. 

To describe the limit 
distribution in the theorem, we define, for $t \in \R$, 
\begin{equation}
\label{eq:E-def}
 \cI (t) := t \int_0^1  u^{-1/2} \re^{u t} \ud u , \text{ and } t_0 := \inf \{ t \in \R : \re^t - \cI(t) \leq 0 \}.
\end{equation}
Then $t_0 \approx 0.8540326566$ (see Lemma~\ref{lem:kummer} below), and  
\begin{equation}
\label{eq:xi-transform}
\phidm (t) := \frac{1}{\re^{t} - \cI (t)}, \text{ for } t < t_0,
\end{equation}
defines the moment generating function of a distribution on~$\RP$
known as the \emph{Darling--Mandelbrot distribution} with parameter $1/2$. We write
$\xi \sim \DM$ to mean that $\xi$ is an $\RP$-valued random variable
with $\Exp [ \re^{t \xi} ] = \phidm(t)$, $t < t_0$. We refer to Section~\ref{sec:heuristics} for
an heuristic explanation
behind the appearance in our Theorem~\ref{thm:meagre-capacity} below of the $\DM$ distribution,
based on its role in the theory of heavy-tailed random sums.
We also refer
to 
Lemma~\ref{lem:xi-moments} for the moments of $\DM$, the first of which is~$\Exp \xi = 1$.

The result in this section is stated for a sequence of  $(N_M,M,z_M)$ models, indexed by $M \in \N$, with
 $z_M = (x_M,y_M) \in I_{N_M} \times I_M$ and $N_M \in \barN$ 
satisfying the \emph{meagre-capacity} assumption that, for some $a \in [0,\infty]$ and $u \in (0,1]$,
\begin{equation}
\label{eq:meagre-capacity}
\lim_{M \to \infty} \frac{  x^2_M \wedge (N_M - x_M)^2 }{M} = a, ~~\lim_{M \to \infty} \frac{y_M}{M} = u, ~\text{ and } 
\lim_{M \to \infty} \frac{M}{N^2_M} = 0.
\end{equation}
Included in~\eqref{eq:meagre-capacity} is any sequence $N_M$ for which $N_M = \infty$ eventually.

For $b = (b_t, t\in\RP)$ a standard Brownian motion on~$\R$ started from $b_0=0$, define~$\tbmo := \inf \{ t \in \RP : b_t = 1\}$.
Then (see Remarks~\ref{rems:darling}\ref{rems:darling-d} below) $\tbmo$ has a positive $(1/2)$-stable (L\'evy) distribution:
\begin{equation}
\label{eq:tbm}
 \tbmo \sim \Stable, \text{ i.e., $\tbmo$ has probability density $f(t) = (2\pi t^3)^{-1/2} \re^{-1/(2t)}$, $t>0$.} \end{equation}
Write  $\Phi  (s):= \Pr ( b_1 \leq s)$ and $\bPhi (s) := 1 - \Phi(s) = \Pr ( b_1 > s)$, $s \in \R$,
for the standard normal distribution and tail functions. Define
\begin{equation}
\label{eq:g-def}
g (a,u) := u + (4-2u-2a ) \bPhi ( \sqrt{a/u} ) +   \sqrt{\frac{2a u}{\pi} } \re^{-a/(2u)} .
 \end{equation}

Here is the limit theorem in the meagre-capacity case.
The case $N_M=\infty$, $a=0$ 
of~\eqref{eq:lifetime-meagre-limit} 
can be read off from results of~\cite{bs} (see Section~\ref{sec:heuristics}); the other cases, we believe, are new.
We use `$\tod$' to denote convergence in distribution under the implicit probability measure (in the following theorem, namely $\PrfM$). 

\begin{theorem}
\label{thm:meagre-capacity}
Consider the $(N_M,M,z_M)$ model with $M \in \N$, $N_M \in \barN$ and $z_M \in I_{N_M} \times I_M$
such that~\eqref{eq:meagre-capacity} holds.
Then, 
if $\xi \sim \DM$ and $\tbmo \sim \Stable$ are independent with distributions given by~\eqref{eq:xi-transform} and~\eqref{eq:tbm} respectively,
it holds that 
\begin{align}
\label{eq:lifetime-meagre-limit}
 \frac{\lambda}{M} & \tod \min ( u, a \tbmo ) + (1+ \xi) \1 { a \tbmo < u}, \text{ as } M \to \infty; \text{ and} \\
\label{eq:expectation-limit}
\lim_{M \to \infty} \frac{\ExpfM \lambda}{M} & = g (a,u) ,
\end{align}
where $g$ is defined at~\eqref{eq:g-def}.
In particular, if $a=0$, then the limits in~\eqref{eq:lifetime-meagre-limit} and~\eqref{eq:expectation-limit}
are equal to $1+\xi$ and $2 = 1 + \Exp \xi$, respectively, while for $a=\infty$ they are both~$u$.
	\end{theorem}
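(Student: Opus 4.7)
The plan is to decompose the lifetime into two phases separated by the first boundary visit. Let $T_1 := \min\{n \geq 0 : X_n \in \bI_{N_M}\}$. On the event $\{T_1 \geq y_M\}$ the energy is exhausted in the interior before any boundary visit, so $\lambda = y_M$; on $\{T_1 < y_M\}$, the strong Markov property at time $T_1+1$ gives $\lambda = T_1 + 1 + \lambda'$, where $\lambda'$ has the lifetime law from state $(1,M)$ (after identifying the hit boundary point with $0$). The meagre-capacity assumption $M/N_M^2 \to 0$ lets us carry out the second phase on the half-line $\ZP$: within any $O(M)$ time window the walker travels distance $O_{\mathrm{p}}(\sqrt{M}) \ll N_M$ from the visited boundary, so the far boundary is reached with vanishing probability; a coupling with the $(\infty,M,\cdot)$ model makes this rigorous.

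For the first phase, Donsker's invariance principle applied to $(X_{\lfloor x_M^2 t\rfloor}/x_M)_{t \geq 0}$, together with $x_M^2/M \to a$, delivers $T_1/M \tod a \tbmo$, where $\tbmo$ is the Brownian first passage time to level~$1$ (equivalently, the passage from~$1$ to~$0$); together with the deterministic $y_M/M \to u$ this gives $(T_1 \wedge y_M)/M \tod \min(u, a \tbmo)$. The edge cases $a \in \{0,\infty\}$ follow from elementary hitting-time bounds (respectively $T_1/M \to 0$ in probability when $x_M = o(\sqrt{M})$, and $T_1 \wedge y_M = y_M$ eventually when $x_M/\sqrt{M} \to \infty$).

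For the second phase I would represent $\lambda'$ via an excursion decomposition. Let $\tau_1,\tau_2,\ldots$ be iid copies of the first passage time to~$0$ of a SRW on $\ZP$ started at~$1$, and set $K := \min\{k : \tau_k > M\}$. A successful excursion ($\tau_k \leq M$) takes $\tau_k$ interior steps ending in state $(0, M-\tau_k)$, followed by one boundary-replenishment step returning the process to $(1,M)$, while the failed $K$-th excursion exhausts all $M$ units of energy in the interior. Hence
\[
\lambda' \;=\; \sum_{k=1}^{K-1}(\tau_k + 1) \;+\; M .
\]
The reflection-principle/Catalan asymptotics give $\Pr(\tau > m) \sim \sqrt{2/(\pi m)}$, so $\tau$ lies in the domain of attraction of $\Stable$ and $K$ is geometric with parameter of order $M^{-1/2}$. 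The classical Darling--Mandelbrot limit theorem for non-negative heavy-tailed iid variables summed up to the first exceedance of the truncation level then yields $M^{-1} \sum_{k=1}^{K-1} \tau_k \tod \xi \sim \DM$; meanwhile $(K-1)/M \to 0$ in probability since $K = O_{\mathrm{p}}(\sqrt{M})$. Thus $\lambda'/M \tod 1+\xi$. Combining the two phases using that the post-$T_1$ excursion structure is independent of $T_1$ by strong Markov, and that $\Pr(a\tbmo = u) = 0$ for $a \in (0,\infty)$ and $u \in (0,1]$, yields~\eqref{eq:lifetime-meagre-limit}.

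The expectation statement~\eqref{eq:expectation-limit} follows from~\eqref{eq:lifetime-meagre-limit} together with uniform integrability of $\lambda/M$; this can be obtained either from Lemma~\ref{lem:irreducibility} after extracting a uniform-in-$M$ decay rate, or directly from $\Exp[\tau \wedge M] = O(\sqrt{M})$ together with $\Exp K = O(\sqrt{M})$ to bound a second moment. The closed form $g(a,u)$ then follows from evaluating $\Exp[\min(u, a\tbmo)] + 2\Pr(a\tbmo < u)$ using the density in~\eqref{eq:tbm}, the reflection-principle identity $\Pr(\tbmo \leq s) = 2\bPhi(1/\sqrt{s})$, and $\Exp \xi = 1$. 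The principal obstacle is the second phase: although the Darling--Mandelbrot limit for truncated heavy-tailed sums is classical, verifying that the representation $\lambda' = \sum_{k<K}(\tau_k+1) + M$ meets the precise hypotheses requires care, because the stopping index $K$ and the truncated summands $\tau_k \wedge M$ share the same underlying randomness; coupled with the half-line reduction needed to handle finite $N_M$, and the joint convergence required to splice together the first-phase Brownian scaling and the second-phase sum, this is where most of the technical effort lies.
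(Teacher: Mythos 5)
Your decomposition of $\lambda$ at the first boundary visit is the same one the paper uses (its identity~\eqref{eq:lambda-general-start} together with Lemma~\ref{lem:first-excursion}), and your treatment of the first phase via Donsker scaling matches Lemma~\ref{lem:first-excursion} exactly. Where you genuinely diverge is the second phase: you represent $\lambda'$ as a threshold sum $\sum_{k<K}(\tau_k+1)+M$ of i.i.d.\ half-line return times and invoke the Darling--Mandelbrot limit for sums stopped at the first exceedance of the truncation level (this is Theorem~1 of~\cite{bs}, which the paper itself notes covers the $N=\infty$, $a=0$ case), plus a coupling to reduce finite $N_M$ to the half-line. The paper instead proves this limit from scratch: Lemma~\ref{lem:excursion-law} gives the conditional i.i.d./geometric structure, and Proposition~\ref{prop:excursion-characteristics} computes the asymptotics of $\theta(N_M,M)$ and of the excursion generating function $\Mgf(t/M)$ directly for the two-sided exit time, so that $\ExpfMo[\re^{t\sigma_\kappa/M}]\to 1/K(t)$; this works uniformly in the whole regime $M\ll N_M^2$ and delivers the convergence of means for free (mgf convergence near $0$ implies moment convergence), whereas your route buys brevity at the cost of an external citation and a separate uniform-integrability argument.

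Two points in your write-up need repair, though both are fixable. First, your justification of the half-line reduction via ``the walker travels $O_{\mathrm{p}}(\sqrt{M})\ll N_M$ in an $O(M)$ window'' does not cover the full meagre-capacity regime: a Gaussian displacement bound gives a per-excursion error of order $\exp\{-cN_M^2/M\}$, and the union bound over the $O_{\mathrm{p}}(\sqrt{M})$ excursions fails when $N_M^2/M\to\infty$ slower than $\log M$. The correct per-excursion estimate is the gambler's-ruin bound $\bP_1(\tau_{N_M}<\tau_0)=1/N_M$ (this is exactly the content of Lemma~\ref{lem:one-sided-approx}\ref{lem:one-sided-approx-i}), which gives a total coupling error $O_{\mathrm{p}}(\sqrt{M}/N_M)\to 0$. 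Second, your first suggested route to uniform integrability of $\lambda/M$ via Lemma~\ref{lem:irreducibility} is a dead end: the exponent $\delta$ there is of order $2^{-M}/(M+1)$, which gives nothing on the scale $M$. Your alternative second-moment bound does work ($\Exp[(\tau\wedge M)^2]=O(M^{3/2})$ and $\Exp[K^2]=O(M)$ give $\Exp[(\lambda'/M)^2]=O(1)$), and the first-phase terms are handled by bounded convergence since $(T_1\wedge y_M)/M\le 1$; with that, your evaluation of the limit mean as $\Exp[\min(u,a\tbmo)]+2\Pr(a\tbmo<u)=g(a,u)$ agrees with Lemma~\ref{lem:g-calculus}.
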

 
\begin{remarks}
	\phantomsection
\label{rems:darling}
\begin{myenumi}[label=(\alph*)]
\item
\label{rems:darling-a}
 For every $0 < u \leq 1$, the function~$a \mapsto g(a,u)$ on the right-hand side of~\eqref{eq:g-def} 
is strictly decreasing: see Lemma~\ref{lem:g-calculus} below.
\item
If $a >0$, then the distribution of the limit in~\eqref{eq:lifetime-meagre-limit} has an atom at value~$u$
of mass $\Pr ( a \tbmo \geq u ) = 2 \Phi ( \sqrt{a /u} ) -1$, as given by~\eqref{eq:reflection} below; on the other hand, if $a=0$, 
it has a density (since~$\xi$ does, as explained in the next remark). The atom at $u$ represents the possibility that the random walk
runs out of energy before ever reaching the boundary.
\item
\label{rems:darling-b} 
The~$\DM$ distribution specified by~\eqref{eq:xi-transform} appears in a classical result of Darling~\cite{darling}
on maxima and sums of positive $\alpha$-stable random variables in the case $\alpha=1/2$, 
and more recently in the analysis of anticipated rejection algorithms~\cite{bs,lew,louchard}, where it has become
known as the Darling--Mandelbrot distribution with parameter $1/2$. Darling~\cite[p.~103]{darling} works with the characteristic function (Fourier transform); Feller~\cite[p.~465]{feller2} gives the~$t<0$ Laplace transform~\eqref{eq:xi-transform}.
The~$\DM$ distribution has a probability density which is continuous on~$(0,\infty)$,  is non-analytic at integer points, and has no elementary closed form,
but has an infinite series representation, can be efficiently approximated, and its asymptotic properties are known: see~\cite{bs,lew,lh}.
\item
\label{rems:darling-d} To justify~\eqref{eq:tbm}, recall that, by the reflection principle, for $t \in (0,\infty)$, 
\begin{equation}
\label{eq:reflection} \Pr ( \tbmo > t ) = \Pr \biggl( \sup_{0 \leq s \leq t} b_s < 1 \biggr)=  2 \Pr ( b_t < 1 ) -1 = 2 \Phi ( t^{-1/2} ) -1;  \end{equation}
this gives~\eqref{eq:tbm}, cf.~\cite[pp.~173--5]{feller2}.
\end{myenumi}
\end{remarks}

\subsection{The confined-space limit}
\label{sec:confined-space}

We now turn to the second limiting regime. 
The result in this section is stated for a sequence of  $(N_M,M,z_M)$ models, indexed by $M \in \N$, with
 $z_M = (x_M, y_M) \in I_{N_M} \times I_M$ and $N_M \in \N$ (note $N_M < \infty$ now)
satisfying the \emph{confined-space} assumption 
\begin{equation}
\label{eq:confined-space}
\lim_{M \to \infty} N_M = \infty, ~~ \lim_{M \to \infty} \frac{M}{N^2_M} = \infty, \text{ and } \liminf_{M \to \infty} \frac{y_M}{M} >0.
\end{equation}
Let $\cExp$ denote a unit-mean exponential random variable.
Here is the limit theorem in this case; 
in contrast to
Theorem~\ref{thm:meagre-capacity},
 the initial location~$x_M$ is unimportant for the limit in~\eqref{eq:lifetime-confined-space},
and the initial energy $y_M$ only enters through the lower bound in~\eqref{eq:confined-space}.
 
\begin{theorem}
\label{thm:confined-space}
Consider the $(N_M,M,z_M)$ model with $M \in \N$, $N_M \in \N$ and $z_M \in I_{N_M} \times I_M$
such that~\eqref{eq:confined-space} holds.
Then, 
as $M \to \infty$,
\begin{align}
\label{eq:lifetime-confined-space}
 \frac{4 \lambda}{N_M^2}  \cos^M ( \pi / N_M) & \tod \cExp.
\end{align}
	\end{theorem}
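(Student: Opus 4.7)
The plan is to decompose $\lambda$ into a sum of excursions between successive boundary visits and then analyse a geometric sum of i.i.d.\ truncated excursion lengths via a Laplace-transform calculation. Let $T_0 := \inf \{ n \geq 0 : X_n \in \bI_{N_M} \}$. By the strong Markov property and the boundary update rule $P(i, M) = 1$, after $T_0$ the walker initiates a sequence of i.i.d.\ excursions: writing $\tau_i$ for the duration of the $i$-th such excursion, each $\tau_i$ has the distribution of the first hitting time of $\{0, N_M\}$ by a simple random walk started from position $1$ (equivalently, $N_M - 1$, by symmetry). A successful cycle from one boundary visit to the next takes $1 + \tau_i$ steps; letting $K := \inf \{ i \geq 1 : \tau_i > M \}$, the walker is absorbed in the interior during the $K$-th excursion after exactly $1 + M$ further steps. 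Thus
\[
\lambda = T_0 + K + M + 1 + \sum_{i=1}^{K-1} \tau_i ,
\]
with the convention $T_0 = 0$ if $x_M \in \bI_{N_M}$.

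The key quantitative input is the exact spectral formula
\[
\Pr ( \tau_i > M ) = \sum_{\substack{1 \leq k \leq N_M - 1 \\ k \text{ odd}}} \tfrac{2}{N_M} \sin (\pi k / N_M) \cot (\pi k / ( 2 N_M ) ) \cos^M ( \pi k / N_M ),
\]
obtained by diagonalising the substochastic transition matrix of simple random walk on $\{1,\ldots,N_M - 1\}$ killed on $\{0, N_M\}$, whose eigenvalues are $\cos ( \pi k / N_M )$ for $1 \leq k \leq N_M - 1$. The $k=1$ term evaluates to $(4/N_M) \cos^2 ( \pi / (2 N_M) ) \cos^M ( \pi / N_M )$, while under the hypothesis $M/N_M^2 \to \infty$ each ratio $(\cos ( \pi k / N_M ) / \cos ( \pi / N_M ))^M$ for $k \geq 3$ is bounded above by $\exp ( - c(k^2 - 1) M / N_M^2 )$ for some absolute $c > 0$, so the tail terms form a rapidly convergent series. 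Writing $\alpha_M := (4/N_M^2) \cos^M ( \pi / N_M )$ for the scaling in~\eqref{eq:lifetime-confined-space}, this shows $p_M := \Pr ( \tau_i > M ) = N_M \alpha_M ( 1 + o(1))$. Combined with the gambler's-ruin identity $\Exp \tau_i = N_M - 1$ and the elementary tail estimate $\Exp [ \tau_i \1{\tau_i > M} ] = o ( N_M )$, this yields the crucial relation $\alpha_M \Exp [ \tau_i \1{\tau_i \leq M} ] / p_M \to 1$.

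Next, we verify that every term in the decomposition of $\lambda$ except $\sum_{i=1}^{K-1} \tau_i$ is negligible on the $\alpha_M$ scale. The assumption $\liminf y_M / M > 0$ and the same tail bound give $\Pr ( T_0 > y_M ) \to 0$, so the walker reaches the boundary before expiring with probability tending to one; on that event, the standard bound $\Exp T_0 \leq N_M^2 / 4$ gives $\alpha_M T_0 \toP 0$. The deterministic term satisfies $\alpha_M M \leq r_M \exp ( - \pi^2 r_M / 2 ) \to 0$ with $r_M := M / N_M^2 \to \infty$. Finally $\alpha_M K = ( \alpha_M / p_M )( p_M K )$, where $\alpha_M / p_M \to 0$ and $p_M K$ is tight (indeed, it converges to $\cExp$, as shown in the final paragraph), so $\alpha_M K \toP 0$.

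It remains to prove $\alpha_M \sum_{i=1}^{K-1} \tau_i \tod \cExp$. Conditional on $\{ K = k \}$, the variables $\tau_1, \ldots, \tau_{k-1}$ are i.i.d.\ with the law of $\tau_i$ given $\tau_i \leq M$; summing the resulting geometric series in $k$ yields
\[
\Exp \biggl[ \exp \biggl( -s \alpha_M \sum_{i=1}^{K-1} \tau_i \biggr) \biggr] = \frac{p_M}{1 - \mu_M ( s )}, \quad \text{where} \ \mu_M (s) := \Exp \bigl[ e^{-s \alpha_M \tau_i } \1{\tau_i \leq M} \bigr] .
\]
Since $s \alpha_M \tau_i \leq s \alpha_M M \to 0$ on $\{ \tau_i \leq M \}$, a first-order Taylor expansion together with the second-moment bound $\alpha_M^2 \Exp \tau_i^2 = o ( p_M )$ gives $1 - \mu_M ( s ) = s \alpha_M \Exp [ \tau_i \1{\tau_i \leq M} ] + p_M + o ( p_M )$. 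Invoking $\alpha_M \Exp [ \tau_i \1{\tau_i \leq M} ] / p_M \to 1$, the ratio converges to $(s+1)^{-1}$, the Laplace transform of $\cExp$. Combining with the negligibility of the other contributions, $\Exp [ \exp ( -s \alpha_M \lambda ) ] \to ( s + 1 )^{-1}$, whence~\eqref{eq:lifetime-confined-space} follows by continuity of Laplace transforms. The main technical obstacle will be the uniform-in-$N_M$ control of the spectral tail $p_M$ beyond the leading $k=1$ term; the rest of the argument follows standard lines for geometric sums with vanishing parameter.
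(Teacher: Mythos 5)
Your overall architecture --- decompose $\lambda$ into a geometric number of i.i.d.\ truncated excursions plus negligible additive terms, extract the tail asymptotic $p_M \sim (4/N_M)\cos^M(\pi/N_M)$ from the spectral formula, and pass to the limit of the geometric sum by a transform computation --- is the same as the paper's. The paper packages the last step as a general limit theorem for geometric sums (Lemma~\ref{lem:exponential-limit}), proved with characteristic functions rather than Laplace transforms, and the hypothesis it verifies is exactly your second-moment condition: $\sigma_M^2 p_M/\mu_M^2 \sim \tfrac{4}{3}\cos^M(\pi/N_M) \to 0$. Your treatment of $T_0$, of the additive terms $K$ and $M+1$, and of the first excursion under the assumption $\liminf y_M/M>0$ also matches the paper's.

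There is, however, one step that is genuinely broken, and it sits exactly where you flag ``the main technical obstacle'': the claim that for odd $k\geq 3$ one has $\bigl(\cos(\pi k/N_M)/\cos(\pi/N_M)\bigr)^M \leq \exp\bigl(-c(k^2-1)M/N_M^2\bigr)$. This fails throughout the upper half of the range of $k$, where $\cos(\pi k/N_M)$ is negative with absolute value possibly as large as $\cos(\pi/N_M)$ itself. For instance, with $N_M$ even and $k=N_M-1$, $\cos(\pi(N_M-1)/N_M)=-\cos(\pi/N_M)$, so the ratio equals $(-1)^M$ and has absolute value $1$; more generally, for $k=N_M-j$ with $j$ small, $|\cos(\pi k/N_M)|^M=\cos^M(\pi j/N_M)$, which for $j=1$ equals the leading term and in no case is anywhere near as small as your bound requires. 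These terms are in fact negligible, but for a different reason: the trigonometric prefactor $\frac{2}{N_M}\bigl(1+\cos(\pi k/N_M)\bigr)=\frac{2}{N_M}\bigl(1-\cos(\pi j/N_M)\bigr)=O(j^2/N_M^3)$ is $O(N_M^{-2})$ relative to the $k=1$ prefactor. The paper's Theorem~\ref{thm:feller-tail-general} handles this by splitting the sum near $k\approx N_M/2$, folding the upper half back via $\cos(\pi-\theta)=-\cos\theta$ into an alternating contribution $\cS_1$, and bounding everything with explicit errors; Remark~\ref{rem:cor-feller-tail} explains why cruder approximations do not give the correct constant when $M\gg N_M^4$, which is allowed under~\eqref{eq:confined-space}. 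So your conclusion $p_M=N_M\alpha_M(1+o(1))$ (i.e.\ Proposition~\ref{prop:confined-space-excursions}) is correct, but the inequality you offer to discard the spectral tail is false as stated and must be replaced by an argument that exploits the decay of the prefactors near $k=N_M$.
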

	
	\begin{remarks}
	\phantomsection
\label{rems:confined-space}
\begin{myenumi}[label=(\alph*)]
\item
\label{rems:confined-space-a}
The appearance of the exponential distribution in the limit~\eqref{eq:lifetime-confined-space} is a
consequence of the fact that it is a rare event for the random walk to spend time $\gg N^2$ in the interior of the interval $I_N$,
and can be viewed as a manifestation of the \emph{Poisson clumping heuristic} for Markov chain hitting times~\cite[\S B]{aldous} or \emph{metastability} of the system that arises from the fact
extinction is certain but unlikely on any individual excursion.
\item
\label{rems:confined-space-b}
Since $\log \cos \theta = - (\theta^2/2) + O (\theta^4)$ as $\theta \to 0$, 
we can re-write~\eqref{eq:lifetime-confined-space}, in the case where $M$ does not grow too fast compared to $N_M^2$, as
\[  \frac{4\lambda}{N_M^2} \exp \left\{ - \frac{\pi^2 M}{2N_M^2} \right\}   \tod \cExp, \text{ if } \lim_{M \to \infty} \frac{M}{N_M^4} = 0.\]
\end{myenumi}
\end{remarks}

\subsection{The critical case}
\label{sec:critical}

Finally, we treat the \emph{critical case} in which there exists $\rho \in (0,\infty)$ such that
\begin{equation}
\label{eq:critical}
\lim_{M \to \infty} N_M = \infty, \text{ and } \lim_{M \to \infty} \frac{M}{N^2_M} = \rho.
\end{equation}
Define the decreasing function $H: (0,\infty) \to \RP$ by
\begin{equation}
\label{eq:H-def}
H (y) :=  \sum_{k=1}^\infty h_k(y) , \text{ where } h_k (y) := \exp \left\{ - \frac{\pi^2 (2k-1)^2 y}{2} \right\} .\end{equation}
Since $H(y) \sim 1/\sqrt{8\pi y}$ as $y \downarrow 0$ (see Lemma~\ref{lem:H} below),
 for every $\rho >0$ and $s \in \R$, 
\begin{equation}
\label{eq:G-def}
 G(\rho, s) := \frac{s}{H(\rho)}   \int_0^1 \re^{s v} \bigl( H (v \rho) - H(\rho) \bigr) \ud v , \end{equation}
is finite. For fixed $\rho>0$, $s \mapsto G(\rho,s)$ is strictly increasing for $s \in \R$, and $G (\rho, 0 ) = 0$.
For $\rho >0$, define $s_\rho := \sup \{ s > 0 : G(\rho, s) < 1 \}$, and then set
\begin{equation}
\label{eq:phi-rho-def}
\phi_\rho (s) := \frac{1}{1-G(\rho, s)} , \text{ for } s < s_\rho.\end{equation}
Finally, define $\mu : (0, \infty) \to \RP$ by
\begin{equation}
\label{eq:mu-def}
\mu (\rho) := 
\frac{1}{\rho H(\rho)} \int_0^\rho H(y) \ud y -1 .
\end{equation}
Here is our result in the critical case. For simplicity of presentation,
we restrict to the case where $\zeta_0 = (1,M)$; one could permit initial
conditions similar to those in~\eqref{eq:meagre-capacity},   
but this would complicate the statement and lengthen the proofs (see Remarks~\ref{rems:critical}\ref{rems:critical-d} below).

\begin{theorem}
\label{thm:critical}
Consider the $(N_M,M,z_M)$ model with $M \in \N$, $N_M \in \N$ and $z_M = (1,M)$
such that~\eqref{eq:critical} holds.
Then, 
as $M \to \infty$,
\begin{align}
\label{eq:lifetime-critical}
 \frac{\lambda}{M}  & \tod 1 + \xi_\rho , \text{ and } \frac{\ExpfMo \lambda}{M} \to 1 + \mu (\rho), 
\end{align}
where $\xi_\rho$ has moment generating function $\Exp [ \re^{s \xi_\rho} ] = \phi_\rho (s)$, $s < s_\rho$,
and expectation $\Exp \xi_\rho = \mu (\rho)$. Moreover,
\begin{equation}
\label{eq:mu-asymptotics}
 \lim_{\rho \downarrow 0} \mu (\rho ) = 1, \text{ and } \mu (\rho ) = \frac{\re^{\pi^2 \rho/2}}{4 \rho} (1+o(1)), \text{ as } \rho \to \infty .\end{equation}
	\end{theorem}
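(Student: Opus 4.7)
The plan is to exploit the renewal structure induced by boundary replenishment. Each visit to $\bI_{N_M}$ resets the energy to~$M$, and the next (boundary) step deposits the walker at position~$1$ or~$N_M-1$; by the reflection symmetry of~$I_{N_M}$ the hitting time of~$\bI_{N_M}$ from either of these points has the same distribution. Letting $\tau^{(0)}, \tau^{(1)}, \ldots$ denote the successive excursion lengths, with $\tau^{(0)}$ started from $X_0 = 1$, the strong Markov property makes these i.i.d. Writing $T := \inf\{i \geq 0 : \tau^{(i)} > M\}$ and $S_T := \sum_{i < T} \tau^{(i)}$, and accounting for the one-step boundary transitions, one obtains
\[
\lambda = S_T + T + M, \quad T \sim \mathrm{Geom}(\alpha_M), \quad \alpha_M := \bP(\tau^{(0)} > M),
\]
with $\tau^{(i)}$, conditional on $T = k$, being i.i.d.\ copies of $\tau^{(0)}$ conditioned on $\{\tau^{(0)} \leq M\}$ for $i < k$. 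Summing a geometric series then yields the key MGF identity
\[
\ExpfMo\!\left[ \re^{s S_T/M} \right] = \frac{\alpha_M}{1 - \psi_M(s)}, \quad \text{where } \psi_M(s) := \bE\!\left[ \re^{s \tau^{(0)}/M} \1{\tau^{(0)} \leq M} \right].
\]

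Next, the asymptotics of $\alpha_M$ and $\psi_M(s)$ must be pinned down under~\eqref{eq:critical}. The sub-Markov transition kernel of SRW on $I_{N_M}$ killed at $\bI_{N_M}$ has eigenvalues $\cos(k\pi/N_M)$ and eigenvectors $\sin(k x \pi/N_M)$ for $1 \leq k \leq N_M - 1$, so the survival function $q(n) := \bP(\tau^{(0)} > n)$ satisfies
\[
q(n) = \frac{4}{N_M} \sum_{j \geq 1} \cos^2\!\bigl( (2j-1)\pi/(2N_M) \bigr) \cos^n\!\bigl( (2j-1)\pi/N_M \bigr),
\]
after using $\sum_{y=1}^{N_M-1} \sin(ky\pi/N_M) = \1{k \text{ odd}} \cot(k\pi/(2N_M))$. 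Combining $\log \cos \theta = -\theta^2/2 + O(\theta^4)$ with the bound $\cos \theta \leq 1 - 2\theta^2/\pi^2$ on $[0,\pi/2]$ gives $N_M q(\lfloor M v \rfloor) \to 4 H(v\rho)$ pointwise in $v \in (0,1]$, dominated by an $L^1([0,1])$ function (a theta-like series behaving like $v^{-1/2}$ near zero). Dominated convergence then yields $N_M \alpha_M \to 4 H(\rho)$, and an Abel summation in $\psi_M(s)$ followed by substitution of this uniform asymptotic collapses, via $H(\rho)\re^s = H(\rho) + s \int_0^1 H(\rho) \re^{sv} \, \ud v$, to
\[
N_M \bigl( 1 - \psi_M(s) \bigr) \to 4 H(\rho) \re^s - 4 s \int_0^1 \re^{sv} H(v\rho) \, \ud v = 4 H(\rho) \bigl( 1 - G(\rho, s) \bigr).
\]
Taking the ratio yields $\ExpfMo[\re^{s S_T/M}] \to (1 - G(\rho,s))^{-1} = \phi_\rho(s)$ on a neighbourhood of $s = 0$.

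Since $\bE T = (1 - \alpha_M)/\alpha_M = O(N_M) = o(M)$, we have $T/M \toP 0$, so $\lambda/M = 1 + T/M + S_T/M \tod 1 + \xi_\rho$ by L\'evy continuity. MGF convergence on an open neighbourhood of zero further supplies uniform integrability for $\lambda/M$, yielding $\ExpfMo[\lambda]/M \to 1 + \phi_\rho'(0)$. Direct differentiation gives $\phi_\rho'(0) = G_s(\rho, 0) = H(\rho)^{-1} \int_0^1 (H(v\rho) - H(\rho)) \, \ud v$, which, after the substitution $y = v\rho$, equals $\mu(\rho)$ as defined in~\eqref{eq:mu-def}. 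For~\eqref{eq:mu-asymptotics}, I would appeal to Lemma~\ref{lem:H}: as $\rho \downarrow 0$, $H(\rho) \sim (8\pi\rho)^{-1/2}$ gives $\rho H(\rho) \sim \sqrt{\rho/(8\pi)}$ and $\int_0^\rho H(y) \, \ud y \sim \sqrt{\rho/(2\pi)}$, whence $\mu(\rho) \to 2-1=1$; as $\rho \to \infty$, $H(\rho) = \re^{-\pi^2 \rho/2}(1 + o(1))$ while $\int_0^\rho H(y) \, \ud y \to \int_0^\infty H(y) \, \ud y = \frac{2}{\pi^2} \sum_{k \geq 1} (2k-1)^{-2} = \frac{1}{4}$, yielding $\mu(\rho) \sim \re^{\pi^2 \rho/2}/(4\rho)$.

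The main technical obstacle is establishing the uniform-in-$n$ asymptotic $N_M q(n) = 4 H(n/N_M^2) + o(1)$ on $1 \leq n \leq M$ with enough quantitative control to survive multiplication by the small quantity $\alpha_M^{-1} = O(N_M)$, and upgrading the resulting MGF convergence from a pointwise statement at $s = 0$ to an open interval containing it (which is what ultimately underpins both the distributional limit and the convergence of expectations via uniform integrability).
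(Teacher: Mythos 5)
Your proposal is correct and follows essentially the same route as the paper: the renewal/geometric-sum MGF identity is the paper's Lemma~\ref{lem:excursion-law} and display~\eqref{eq:Xi-gf-s}, your spectral formula for $q(n)$ is identical to the paper's~\eqref{eq:feller-tail} after the identity $1+\cos\theta=2\cos^2(\theta/2)$, and the integration by parts plus the uniform tail asymptotic $N q(yN^2)\to 4H(y)$ (with the $y^{-1/2}$-integrable domination near $0$) is exactly Proposition~\ref{prop:critical-excursions} via Corollary~\ref{cor:feller-tail}\ref{cor:feller-tail-ii} and~\eqref{eq:uniform-tail-bound}. The technical obstacle you flag at the end — making the $4H(n/N^2)$ approximation uniform enough to survive division by $\alpha_M$ — is precisely what the paper's Theorem~\ref{thm:feller-tail-general} is for, and your $\mu(\rho)$ asymptotics match the paper's use of Lemma~\ref{lem:H} and $\int_0^\infty H=1/4$.
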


\begin{remarks}
	\phantomsection
\label{rems:critical}
\begin{myenumi}[label=(\alph*)]
\item
\label{rems:critical-a}
The function~$H$ defined at~\eqref{eq:H-def} is 
in the family of theta functions, which arise throughout
the distributional theory of one-dimensional Brownian motion on an interval, which is the origin of the appearance here: see e.g.~\cite[pp.~340--343]{feller2} or~Appendix~1 of~\cite{bosa}, and Section~\ref{sec:two-sided} below.
\item
\label{rems:critical-b}
The $\rho \to \infty$ asymptotics in~\eqref{eq:mu-asymptotics}
are consistent with Remark~\ref{rems:confined-space}\ref{rems:confined-space-b}, in the sense that both are consistent with, loosely speaking, the claim that
\[ \ExpfMo \lambda \sim \frac{N_M^2}{4} \exp \left\{ - \frac{\pi^2 M}{2N_M^2} \right\} , \text{ when }   N_M^2 \ll M \ll N_M^4 , \]
although neither of those results formally establishes this.
\item
\label{rems:critical-c}
An integration by parts shows that $G$ defined at~\eqref{eq:G-def} has the representation
\[  G(\rho, s) = \int_0^\infty (  \re^{sx} -1) \frac{m_\rho(x)}{x} \ud x, \text{ where } m_\rho (x) := \frac{\pi^2 \rho x}{2 H(\rho)} \mathbbm{1}_{[0,1]} (x) \sum_{k=1}^\infty (2k-1)^2 h_k (\rho x), \]
which shows that $\phi_\rho$ corresponds to an infinitely divisible distribution (see e.g.~\cite[p.~91]{svh});
since $m_\rho$ is compactly supported, however, the distribution is \emph{not} compound exponential~\cite[p.~100]{svh}.
\item
\label{rems:critical-d}
It is possible to extend
Theorem~\ref{thm:critical} to initial states $z_M = (x_M, y_M)$  
satisfying $y_M/M \to u \in (0,1]$ and $x_M^2/M \to a \in [0,\infty]$,
and  the limit statement~\eqref{eq:lifetime-critical} would need
to be modified to include  additional terms similar to in Theorem~\ref{thm:meagre-capacity}, but with $\tbmo$ replaced by a two-sided exit time. 
We do not pursue this extension here.
\end{myenumi}
\end{remarks}

\subsection{Organization and heuristics behind the proofs}
\label{sec:heuristics}

The rest of the paper presents proofs of Theorems~\ref{thm:meagre-capacity}, \ref{thm:confined-space}, and \ref{thm:critical}.
The basic ingredients are
 provided by some results on excursions of simple symmetric random walk on $\Z$,
given in Section~\ref{sec:random-walk}; some of this material is classical,
but the core estimates that we need are, in parts, quite intricate and we were unable to find them in the
literature.
The main body of the proofs is presented in Section~\ref{sec:proofs}. 
First, we establish a renewal framework that provides the structure for the proofs, which,  
together with a generating-function analysis involving the excursion estimates from Section~\ref{sec:random-walk},
yields the results. In Section~\ref{sec:conclusion} we comment on some possible 
future directions. Appendix~\ref{sec:kummer} briefly presents some necessary facts about the Darling--Mandelbrot distribution appearing in Theorem~\ref{thm:meagre-capacity}. Here
 we outline the main ideas behind the proofs, and make some links to the literature.

It is helpful to first imagine a walker that is ``immortal'', i.e., has an unlimited supply of energy.
The energy-constrained walker is indistinguishable from the immortal walker until the first moment that the time since its most recent visit to the boundary exceeds~$M+1$, at which point the energy-constrained walker
becomes extinct. Let $s_1 < s_2 < \cdots$ denote the successive times of visits to $\bI_N$ by the immortal walker, and let $u_k = s_k - s_{k-1}$
denote the duration of the $k$th excursion ($k \in \N$, with $s_0 :=0$). The energy-constrained walker starts $\kappa \in \N$
excursions before becoming extinct, where $\kappa = \inf \{ k \in \N : u_k > M+1\}$. At every boundary visit at which the energy-constrained walk is still active,
there is a probability $\theta (N, M) = \bP_1 ( \tau_{0,N} > M+1 )$ that the walk will run out of energy on the next excursion,
where $\tau_{0,N}$ is the hitting time of $\bI_N$ by the random walk, and $\bP_1$ indicates we start from site $1$
(equivalently, site $N-1$). Each time the walker visits $\bI_N$ there is a ``renewal'', because energy is topped up to level $M$ and the next excursion begins, started from one step away from the boundary.
If the walk starts at time $0$ in a state other than at full energy, next to the boundary, then the very first excursion has a different distribution from the rest, and this plays a role in some of our results,
but   is a second-order consideration for the present heuristics. 
The key consequence of the renewal structure is that the excursions have a (conditional) independence property, and the number $\kappa$ of excursions has
a geometric distribution with parameter given by the extinction probability $\theta (N,M)$ (see Lemma~\ref{lem:excursion-law} below for a formal statement).

Suppose first that $N = \infty$, the simplest case of the meagre-capacity limit.  
We indicate the relevance of Darling's result on the maxima of stable variables~\cite[Theorem~5.1]{darling} to this model,
which gives some intuition for the appearance of the $\DM$ distribution in Theorem~\ref{thm:meagre-capacity}. First we describe Darling's result. 
Suppose that $Z_1, Z_2, \ldots$ are i.i.d.~$\RP$-valued random variables in the domain of attraction
of a (positive) stable law with index~$\alpha \in (0,1)$, and let $S_n := \sum_{i=1}^n Z_i$ and $T_n := \max_{1 \leq i \leq n} Z_i$. 
Darling's theorem says that 
$S_n / T_n$ converges in distribution to $1 +\xi_\alpha$, as $n \to \infty$, where $\xi_\alpha \sim \mathrm{DM} (\alpha)$.
A generalization to other order statistics is~\cite[Corollary~4]{ab}.

In the case where $N = \infty$, the durations $u_1, u_2, \ldots$ of excursions of simple symmetric random walk on $\ZP$ away from~$0$
satisfy the $\alpha =1/2$ case of Darling's result, so that $T_n:= \sum_{i=1}^n u_i$ and $M_n:= \max_{1 \leq i \leq n} u_i$ satisfy
$T_n / M_n \tod 1 + \xi$ where $\xi \sim \DM$. Replacing~$n$ by $\kappa$, the number of excursions up to extinction,
for which $\kappa \to \infty$ in probability as $M, N \to \infty$, it is plausible that $T_\kappa / M_\kappa \tod 1 + \xi$ also.
But $T_\kappa$ is essentially $\lambda$, while $M_\kappa$ will be very close to~$M$, the upper bound on $u_i$, $i < \kappa$.
This heuristic argument is not far from a proof of the case $N = \infty$ of Theorem~\ref{thm:meagre-capacity}. 
More precisely, one can express $\lambda$ in terms of the \emph{threshold sum process}~\cite{bs}, and then the $N=\infty$, $a=0$~case of Theorem~\ref{thm:meagre-capacity} is a consequence of Theorem~1 of~\cite{bs}.
Another way to access intuition behind the $M$-scale result for $\lambda$ is that in this case $\theta (N, M)$ is of order $M^{-1/2}$ (see Proposition~\ref{prop:excursion-characteristics} below),
so there are of order $M^{1/2}$ completed excursions before extinction,
while the expected duration of an excursion of length less than $M$ is of order $M^{1/2}$ (due to the $1/2$-stable tail).  
Our proof below (Section~\ref{sec:proofs:small-energy}) covers the full regime $M \ll N^2$. That this is the relevant scale is due to the fact that simple random walk travels distance about $\sqrt{M}$ in time $M$,
so if $N^2 \gg M$ it is rare for the random walk to encounter the opposite end of the boundary from which it started.

Consider now the confined-space regime, where $M \gg N^2$. 
Now it is very likely that the random walk will traverse the whole of $I_N$ many times before it runs out of energy, and so there will be 
 many excursions before extinction.
Indeed, the key quantitative result in this case, Proposition~\ref{prop:confined-space-excursions} below,
shows that $\theta (N, M) \sim (4/M) \cos^M (\pi/N)$, which is small. 
Each excursion has mean duration about $N$ ($\bE_1 \tau_{0,N} = N-1$; see Lemma~\ref{lem:mean-tau}).
Roughly speaking,  the law of large numbers ensures that $\lambda \approx \kappa N$, and then
\[ \Pr ( \lambda \geq n ) \approx \Pr ( \kappa \geq n/N ) \approx (1 -\theta (N,M) )^{n/N} \approx \exp \left\{ - \frac{4n}{N^2} \cos^M \left( \frac{\pi}{N} \right) \right\} ,\]
which is essentially the exponential convergence result in Theorem~\ref{thm:confined-space}.

The case that is most delicate is the critical case where $M \sim \rho N^2$. The extinction probability estimate, given in Proposition~\ref{prop:critical-excursions} below,
is now $\theta (N, M) \sim (4/N) H(\rho)$, where $H$ is given by~\eqref{eq:H-def};
the delicate nature is because, on the critical scale, the two-boundary nature of the problem has an impact (unlike the meagre-capacity regime),
while extinction is sufficiently likely that the largest individual excursion fluctuations are on the same scale as the total lifetime
(unlike the confined-space regime). 
Since $\theta (N, M)$ is of order $1/N$,
 both the number of excursions before extinction, and the duration of a typical excursion, are of order $N$ (i.e., $M^{1/2}$),
similarly to the meagre-capacity case, and so again there is an $M$-scale limit for $\lambda$, but the (universal) Darling--Mandelbrot
distribution is replaced by the curious distribution exhibited in Theorem~\ref{thm:critical}, which we have not seen elsewhere.

\section{Excursions of simple random walks}
\label{sec:random-walk}

\subsection{Notation and preliminaries}
\label{sec:rw-notation}

For $x \in \Z$, let $\bP_x$ denote the law of a simple symmetric random walk on $\Z$ with initial state~$x$.
We denote by $S_0, S_1, S_2, \ldots$ the trajectory of the random walk with law $\bP_x$, realised on a suitable probability space, so that $\bP_x (S_0 = x) =1$
and $\bP_x ( S_{n+1} - S_n = + 1 \mid S_0, \ldots, S_n ) = \bP_x ( S_{n+1} - S_n = - 1 \mid S_0, \ldots, S_n) = 1/2$ for all $n \in \ZP$.
Let $\bE_x$ denote the expectation corresponding to $\bP_x$.
We sometimes write simply $\bP$ and $\bE$ in the case where the initial state plays no role.

For $y \in \Z$, let $\tau_y := \inf \{ n \in \ZP : S_n = y \}$, the hitting time of~$y$. As usual, we set $\inf \emptyset := +\infty$;
 the recurrence of the random walk says that $\bP_x ( \tau_y < \infty ) = 1$ for all $x, y \in \Z$.
Also define 
$\tau_{0,\infty} := \tau_0$ and, for $N \in \N$,
$\tau_{0,N} := \tau_0 \wedge \tau_N = \inf \{ n \in \ZP : S_n \in \{0,N\}\}$.

The number
of $(2n+1)$-step simple random walk paths that start at $1$ and visit $0$ for the first time at time $2n+1$ is
the same as the number of $(2n+1)$-step paths that start at $0$, finish at $1$, and never return to $0$,
which, by the classical ballot theorem~\cite[p.~73]{feller1}, is $\frac{1}{2n+1} \binom{2n+1}{n+1} = \frac{1}{n+1} \binom{2n}{n}$.
Hence, by Stirling's formula (cf.~\cite[p.~90]{feller1}),
\begin{equation}
\label{eq:tau_0-mass}
 \bPo ( \tau_0 = 2n + 1) = \frac{2^{-1-2n}}{n+1} \binom{2n}{n} \sim \frac{1}{2 \sqrt{ \pi}} n^{-3/2} , \text{ as } n \to \infty.\end{equation}
Similarly, since $\bPo ( \tau_0 \geq 2n+1 ) = \bPo ( S_1 >0, \ldots, S_{2n-1} > 0 )
= 2\bP_0 ( S_1 > 0, \ldots, S_{2n} > 0)$ (cf.~\cite[pp.~75--77]{feller1}) we have that
\begin{equation}
\label{eq:tau_0-tail} \bPo ( \tau_0 \geq 2n + 1) 
= 2^{-2n} \binom{2n}{n} 
\sim \frac{1}{\sqrt{\pi n}}, \text{ as } n \to \infty  .\end{equation}
The distribution of $\tau_{0,N}$ is more complicated; there's an exact formula (see e.g.~\cite[p.~369]{feller1})
that will be needed when we look at larger time-scales (see Theorem~\ref{thm:feller-tail-general} below),
but
for shorter time-scales, it will suffice to approximate the two-sided exit time $\tau_{0,N}$
in terms of the (simpler) one-sided exit time $\tau_0$. This is the subject of the next subsection.

\subsection{Short-time approximation by one-sided exit times}
\label{sec:one-sided}

The next lemma studies the duration of excursions that are constrained to be short.

\begin{lemma}
\phantomsection
\label{lem:one-sided-approx}
\begin{enumerate}[label=(\roman*)]
\item\label{lem:one-sided-approx-i}
For all $N \in \barN$,
all $x \in I_N$, and all $n \in \ZP$, 
\begin{equation}
\label{eq:one-sided-approx-1}
  \left|  \bP_x ( \tau_{0,N} > n ) - \bP_x ( \tau_{0} > n ) \right| \leq \frac{x}{N}  .\end{equation}
\item\label{lem:one-sided-approx-ii}
	Let $\eps >0$. Then there exists $n_\eps \in \N$, depending only on $\eps$, such that, for all $N \in \barN$,
\begin{equation}
\label{eq:one-sided-approx-2}
  \left|  \bPo ( \tau_{0,N} > n ) - n^{-1/2} \sqrt{ {2} / {\pi}}  \right| \leq \eps n^{-1/2} + \frac{1}{N} , \text{ for all }  n \geq n_\eps .\end{equation}
\item\label{lem:one-sided-approx-iii}
Fix $\eps >0$. Then there exist $n_\eps \in \N$ and $\delta_\eps \in (0,\infty)$ such that
\begin{equation}
\label{eq:fast-return-tail}
\sup_{n_\eps \leq n < \delta_\eps N^2 } \left| n^{1/2} \bP_1 ( \tau_{0,N} > n ) - \sqrt{ {2} / {\pi}}   \right| \leq \eps, \text{ for all } N \in \barN .
\end{equation}
\end{enumerate}
\end{lemma}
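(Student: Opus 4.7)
\medskip

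\noindent\textbf{Proof plan for Lemma~\ref{lem:one-sided-approx}.}

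The plan is to reduce everything to the known one-sided tail~\eqref{eq:tau_0-tail}, using a simple gambler's-ruin comparison to transfer the estimate from $\tau_0$ to $\tau_{0,N}$, and then to tune the trade-off between the two error terms.

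\emph{Part~\ref{lem:one-sided-approx-i}.} First I would observe the pointwise inclusion $\{\tau_{0,N} > n\} \subseteq \{\tau_0 > n\}$, which gives
\[
0 \leq \bP_x(\tau_0 > n) - \bP_x(\tau_{0,N} > n) = \bP_x(\tau_N < \tau_0,\, \tau_N \leq n) \leq \bP_x(\tau_N < \tau_0).
\]
By the classical gambler's ruin identity for simple symmetric random walk on $\{0,1,\ldots,N\}$, the rightmost probability equals $x/N$ (with the convention $1/\infty = 0$ covering the case $N=\infty$, which is trivial anyway since then $\tau_{0,\infty} = \tau_0$). This yields~\eqref{eq:one-sided-approx-1}.

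\emph{Part~\ref{lem:one-sided-approx-ii}.} I would first upgrade~\eqref{eq:tau_0-tail} to an arbitrary-$n$ (not merely odd-indexed) asymptotic: because $\tau_0$ under $\bP_1$ is a.s.\ odd, one has $\bP_1(\tau_0 > n) = \bP_1(\tau_0 \geq 2\lceil n/2\rceil + 1)$, so~\eqref{eq:tau_0-tail} together with $\lceil n/2 \rceil / n \to 1/2$ gives
\[
\lim_{n\to\infty} n^{1/2}\, \bP_1(\tau_0 > n) = \sqrt{2/\pi}.
\]
Hence for any $\eps>0$ there exists $n_\eps$ (depending only on $\eps$) such that $|\bP_1(\tau_0 > n) - \sqrt{2/(\pi n)}| \leq \eps n^{-1/2}$ for all $n \geq n_\eps$. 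Applying the triangle inequality together with part~\ref{lem:one-sided-approx-i} (with $x=1$) then yields~\eqref{eq:one-sided-approx-2}.

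\emph{Part~\ref{lem:one-sided-approx-iii}.} The idea is simply to choose $\delta_\eps$ so small that the $1/N$ error in~\eqref{eq:one-sided-approx-2} is absorbed into the $n^{-1/2}$ error whenever $n < \delta_\eps N^2$. Concretely, apply part~\ref{lem:one-sided-approx-ii} with $\eps/2$ in place of $\eps$ to obtain a threshold $n_\eps$; then, setting $\delta_\eps := \eps^2/4$, one has $1/N \leq \sqrt{\delta_\eps}\, n^{-1/2} = (\eps/2) n^{-1/2}$ for all $n < \delta_\eps N^2$. Combining these two bounds gives~\eqref{eq:fast-return-tail}, uniformly in $N \in \barN$ (the case $N=\infty$ being covered since there the $1/N$ term vanishes and the range $n < \delta_\eps N^2$ is all of $\{n \geq n_\eps\}$).

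No step is particularly delicate; the only point to watch is the parity adjustment when passing from the even-indexed asymptotic~\eqref{eq:tau_0-tail} to a statement valid for all $n$, and the careful budgeting of $\eps$ between the two error sources in~\ref{lem:one-sided-approx-iii}.
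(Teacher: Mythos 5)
Your proof is correct and takes essentially the same route as the paper's: the gambler's-ruin comparison $\bP_x(\tau_N<\tau_0)=x/N$ for part~\ref{lem:one-sided-approx-i}, the one-sided tail~\eqref{eq:tau_0-tail} combined with the $x=1$ case of~\eqref{eq:one-sided-approx-1} for part~\ref{lem:one-sided-approx-ii}, and the choice $\delta_\eps$ of order $\eps^2$ to absorb the $1/N$ term into the $n^{-1/2}$ error for part~\ref{lem:one-sided-approx-iii} (the paper takes $\delta=\eps^2$ and then rescales $\eps$; your $\eps/2$ budgeting is equivalent). One cosmetic point: in part~\ref{lem:one-sided-approx-i} the middle relation should be ``$\leq$'' rather than ``$=$'', since $\{\tau_N<\tau_0,\ \tau_N\leq n\}$ does not force $\tau_0>n$ (the walk may return to $0$ by time $n$ after visiting $N$); this does not affect the final bound.
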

\begin{proof}
Suppose that $N \in \N$ and $x \in I_N$.
Since $\tau_0 \neq \tau_{0,N}$ if and only if $\tau_N < \tau_0$,
 \[ \sup_{n \in \ZP} \left| \bP_x ( \tau_{0,N} > n ) -  \bP_x ( \tau_0 > n ) \right| \leq \bP_x ( \tau_N < \tau_0 ) = \frac{x}{N}, \text{ for all } x \in I_N,\]
by the classical gambler's ruin result for symmetric random walk;
if $N = \infty$ then $\bP_x ( \tau_{0,N} > n) = \bP_x (\tau_0 > n)$ by definition. This verifies~\eqref{eq:one-sided-approx-1}.
Then~\eqref{eq:one-sided-approx-2} follows from
the $x=1$ case of~\eqref{eq:one-sided-approx-1} together with~\eqref{eq:tau_0-tail}. Finally, for a given $\eps>0$, the bound~\eqref{eq:one-sided-approx-2} implies that there exists $n_\eps \in \N$
such that, for all $n_\eps \leq n \leq \delta N^2$,
\[  \left|   \bP_1 ( \tau_{0,N} > n ) -  n^{-1/2} \sqrt{{2}/{\pi}}  \right|  \leq  ( \eps + \delta^{1/2} ) n^{-1/2} \leq 2 \eps n^{1/2} , \]
if $\delta = \eps^2$, say. This yields~\eqref{eq:fast-return-tail} (suitably adjusting~$\eps$).
\end{proof}

Lemma~\ref{lem:first-excursion} gives a  limit result for the duration of a simple random walk excursion started with initial condition $x_M$ of order $\sqrt{M}$;
we will apply this to study the initial excursion of the energy-constrained walker. 
Recall that $b = (b_t, t\in \RP)$ denotes standard Brownian motion started at $b_0 =0$, and $\tbmo$~its first time of hitting level~$1$.

\begin{lemma}
\label{lem:first-excursion}
Let $a \in [0,\infty]$. Suppose that $M, N_M \in \N$ and $x_M \in I_{N_M}$ are such that $x^2_M / M \to a$ and $x_M / N_M \to 0$ as $M \to \infty$. 
Then for every $y \in \RP$,
\begin{equation}
\label{eq:limit-distribution-truncated}
\frac{\tau_{0,N_M}}{M} \1 { \tau_{0,N_M} \leq y M } \tod a \tbmo \1 { a \tbmo \leq y } ,\end{equation}
where the right-hand side of~\eqref{eq:limit-distribution-truncated} is to be interpreted as $0$ whenever $a \in \{0,\infty\}$.
In particular,  
for every $\beta \in \RP$ and all $y \in \RP$,
\begin{align} 
\label{eq:limit-moments-truncated}
 \lim_{M \to \infty} M^{-\beta} \bE_{x_M} [ \tau^\beta_{0,N_M} \1 { \tau_{0,N_M} \leq y M } ]  = a^\beta \Exp [  (\tbmo)^\beta \1 { a \tbmo \leq y } ].
\end{align}
\end{lemma}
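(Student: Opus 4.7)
The plan is to reduce the lemma to an application of Donsker's invariance principle, together with a bounded-convergence argument for the moment part. The key observation is that the truncated random variable $(\tau_{0,N_M}/M)^\beta \1{\tau_{0,N_M} \leq yM}$ is deterministically bounded by $y^\beta$, while the proposed limit $(a\tbmo)^\beta \1{a\tbmo \leq y}$ is also bounded by $y^\beta$; hence~\eqref{eq:limit-moments-truncated} will follow from~\eqref{eq:limit-distribution-truncated} by bounded convergence, and it suffices to establish the distributional convergence.

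For the generic case $a \in (0,\infty)$, I would apply Donsker's invariance principle to the rescaled processes $W^M_t := S_{\lfloor Mt \rfloor}/\sqrt{M}$ under $\bP_{x_M}$. Since $x_M/\sqrt{M} \to \sqrt{a}$, the initial values converge, and $W^M$ converges weakly in $C[0,\infty)$ to a Brownian motion started at~$\sqrt{a}$. The hypothesis $x_M/N_M \to 0$ combined with $x_M \asymp \sqrt{M}$ forces $N_M/\sqrt{M} \to \infty$, so the upper barrier recedes to infinity after rescaling, and the two-sided exit time $\tau_{0,N_M}/M$ coincides asymptotically with the one-sided hitting time of~$0$. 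Continuity of the hitting-time functional at typical Brownian paths is standard, and Brownian scaling together with reflection symmetry shows that the limit distribution equals that of $a\tbmo$. Since $\tbmo$ has a continuous density (see~\eqref{eq:tbm}), $\Pr(a\tbmo = y) = 0$, so the map $t \mapsto t \1{t \leq y}$ is almost surely continuous with respect to the limit law, and the continuous mapping theorem yields~\eqref{eq:limit-distribution-truncated}.

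In the boundary cases $a \in \{0,\infty\}$ the claimed limit is zero, so it suffices to show convergence to~$0$ in probability. For $a = 0$ (so $x_M = o(\sqrt{M})$), one bounds $\tau_{0,N_M} \leq \tau_{0}$ and uses that $\tau_0/x_M^2$ is tight (indeed, the same Donsker rescaling at the $x_M$-scale gives convergence to $\tbmo$), whence $\tau_{0,N_M}/M = (x_M^2/M)(\tau_{0,N_M}/x_M^2) \toP 0$. For $a = \infty$, the hypothesis $x_M/N_M \to 0$ still gives $N_M/x_M \to \infty$, so rescaling at the $x_M$-scale yields $\tau_{0,N_M}/x_M^2 \tod \tbmo$, and hence $\tau_{0,N_M}/M \to \infty$ in probability, making the truncation indicator vanish with probability tending to~$1$. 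The only delicate step throughout is verifying that the indicator truncation does not introduce a discontinuity through a point of positive mass for the limit law; this is neutralised by the absolute continuity of $\tbmo$, after which the argument is an entirely standard application of Donsker together with bounded convergence.
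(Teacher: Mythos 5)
Your argument is correct and follows essentially the same route as the paper's proof: Donsker's invariance principle plus the continuous mapping theorem for the first-passage functional, the gambler's-ruin bound \eqref{eq:one-sided-approx-1} to pass from $\tau_{0,N_M}$ to $\tau_0$, absolute continuity of $\tbmo$ to handle the truncation indicator, and bounded convergence for \eqref{eq:limit-moments-truncated}. The only (cosmetic) caveat is in the case $a=0$: your parenthetical justification of tightness of $\tau_0/x_M^2$ via Donsker at the $x_M$-scale fails when $x_M$ stays bounded, but the tightness you actually use still holds uniformly in $x \geq 1$ by the elementary tail bound $\bP_x(\tau_0 > n) \leq C x n^{-1/2}$ (cf.\ \eqref{eq:tau_0-tail}), so the conclusion stands.
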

\begin{proof}
Suppose that $0 \leq a < \infty$.
Let $\cC$ denote the space of continuous functions $f : \RP \to \R$, endowed with the uniform metric
$\| f - g \|_\infty := \sup_{t \in \RP} | f(t) - g(t) |$.
Define the re-scaled and interpolated random walk trajectory $z_M \in \cC$ by 
\[ z_M (t) =    M^{-1/2} \left( S_{\lfloor M t \rfloor} - S_0 + (Mt - \lfloor M t \rfloor) ( S_{\lfloor M t \rfloor +1} - S_{\lfloor M t \rfloor} ) \right)  , \text{ for } t \in \RP. \]
Then $S_0 / M^{1/2} \to \sqrt{a}$, and, 
by Donsker's theorem (see e.g.~Theorem 8.1.4 of~\cite{durrett}), $z_M$
converges weakly in $\cC$ to $(b_t)_{t \in \RP}$, where $b$ is standard Brownian motion started from~$0$. 
For $f \in \cC$, let $T_a (f) := \inf \{ t \in \RP : f(t) \leq -\sqrt{a} \}$.
By Brownian scaling, $T_a (b)$ has the same distribution as $a \tbmo$.
With probability~1, $T_a(b)< \infty$ and $b$ has no intervals of constancy. 
Hence (cf.~\cite[p.~395]{durrett}) the set $\cC' := \{ f \in \cC : T_a \text{ is finite and continuous at } f \}$  has $\Pr ( b \in \cC' ) = 1$
and hence, 
 by the continuous mapping theorem, $T_a ( z_M) \to T_a (b)$ in distribution. If $a = \infty$ this says $T_\infty (z_M) \to \infty$ in probability,
and if $a=0$ it says  $T_0 (z_M) \to 0$  in probability;
otherwise, $\Pr ( a \tbmo > y )$ is continuous for all $y \in \RP$ and $\tau_0 / M = T_a ( z_M + M^{-1/2} S_0 - \sqrt{a} )$ has the same limit
as $T_a (z_M)$. Hence we conclude that 
\begin{equation}
\label{eq:weak-limit}
 \lim_{M \to \infty} \bP_{x_M} ( \tau_0 > y M) = \Pr ( a \tbmo > y ) , \text{ for all } y \in \RP,\end{equation}
where the right-hand side is equal to~$0$ if $a=0$ and $1$ if $a=\infty$.
It follows from~\eqref{eq:one-sided-approx-1} that~\eqref{eq:weak-limit} also holds for $\tau_{0,N_M}$ in place of $\tau_0$,
provided that $x_M / N_M \to 0$. Hence, under the conditions of the lemma, we have
\begin{equation}
\label{eq:limit-distribution}
  \frac{\tau_{0,N_M}}{M} \tod a \tbmo , \text{ as } M \to \infty.\end{equation}
If random variables $X, X_n$ satisfy $X_n \tod X$, then, for every $y \in \RP$,
 $X_n \1 { X_n \leq y } \tod X \1{ X \leq y }$;
this follows from the fact that
\[ \Pr ( X_n \1 { X_n \leq y } \leq x ) = \begin{cases} 1 & \text{if } x \geq y ,\\
\Pr (X_n \leq x)  & \text{if } x < y , \end{cases} \]
and $x <y$ is a continuity point of $\Pr ( X \1{ X \leq y } \leq x)$
if and only if it is a continuity point of $\Pr (X \leq x)$. Hence~\eqref{eq:limit-distribution}
implies~\eqref{eq:limit-distribution-truncated}. The bounded convergence theorem then yields~\eqref{eq:limit-moments-truncated}.
\end{proof}

\subsection{Long time-scale asymptotics for two-sided exit times}
\label{sec:two-sided}

The following result gives the 
expectation and variance of the duration of the classical gambler's ruin game;
the expectation can be found, for example, in~\cite[pp.~348--349]{feller1},
while the variance is computed in~\cite{bach,ah}.

\begin{lemma}
\label{lem:mean-tau}
For every $N \in \N$ and every $x \in I_N$, we have
\[ \bE_x \tau_{0,N} = x (N -x), \text{ and } \bVar_x \tau_{0,N} = \frac{x (N-x)}{3}  \left[ x^2 + (N-x)^2 -2 \right] .\]
\end{lemma}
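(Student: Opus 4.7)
My plan is to prove both identities by identifying appropriate polynomial martingales and then applying the optional stopping theorem to $\tau := \tau_{0,N}$. A preliminary observation is that $\tau$ has all moments finite: because, from any starting point $x \in I_N$, the walk has probability at least $2^{-N}$ of reaching $\{0,N\}$ within $N$ steps, one obtains a uniform geometric tail $\bP_x(\tau > kN) \leq (1-2^{-N})^k$, which justifies optional stopping for every polynomial functional of $(S_n, n)$.

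For the mean I would use the standard martingale $S_n^2 - n$, together with the gambler's ruin identity $\bP_x(S_\tau = N) = x/N$ (with $\bP_x(S_\tau = 0) = 1 - x/N$): optional stopping gives $x^2 = \bE_x S_\tau^2 - \bE_x \tau = N^2 \cdot (x/N) - \bE_x \tau$, yielding $\bE_x \tau = x(N - x)$. For the second moment, the key step is to evaluate $\bE_x \tau^2$, and for this I would use two further martingales. A direct computation with $\bE[(S_n \pm 1)^k]$ shows that
\begin{equation*}
A_n := S_n^3 - 3 n S_n, \qquad B_n := S_n^4 - 6 n S_n^2 + 3 n^2 + 2 n,
\end{equation*}
are both martingales under $\bP_x$ (the coefficients are forced by matching powers of $n$ and $S_n$). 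Optional stopping applied to $A_n$ gives $x^3 = N^3 \cdot (x/N) - 3 N \bE_x[\tau \1{S_\tau = N}]$, so that
\begin{equation*}
\bE_x [\tau \, \1{S_\tau = N}] = \frac{x(N^2 - x^2)}{3N} = \frac{x(N-x)(N+x)}{3N},
\end{equation*}
and hence $\bE_x[\tau S_\tau^2] = N^2 \, \bE_x[\tau \1{S_\tau = N}] = N x (N-x)(N+x)/3$.

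Applying optional stopping to $B_n$ then gives
\begin{equation*}
x^4 = \bE_x S_\tau^4 - 6 \bE_x[\tau S_\tau^2] + 3 \bE_x \tau^2 + 2 \bE_x \tau = N^3 x - 2 N x (N^2 - x^2) + 3 \bE_x \tau^2 + 2 x(N - x),
\end{equation*}
from which one solves $\bE_x \tau^2 = \tfrac{1}{3} x \bigl( x^3 - 2 N x^2 + N^3 - 2 N + 2 x \bigr)$, and then $\bVar_x \tau = \bE_x \tau^2 - x^2(N-x)^2$ reduces, after factoring out $x(N-x)/3$, to $\tfrac{x(N-x)}{3}[x^2 + (N-x)^2 - 2]$, giving the stated formula. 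I expect the main obstacle to be the arithmetic: verifying the martingale property of $B_n$ and performing the final simplification cleanly (the expression $N^3 - 2 N x^2 + x^3 - 2 N + 2x$ must factor as $(N-x)(x^2 + (N-x)^2 - 2) + 3 x (N - x)^2$, which is elementary but easy to slip on). An alternative route, via solving the inhomogeneous recursions $v_k(x) = 1 + \tfrac{1}{2}(v_k(x-1) + v_k(x+1)) + \text{lower-order terms}$ for $v_k(x) := \bE_x \tau^k$ with boundary condition $v_k(0) = v_k(N) = 0$, gives the same answer but seems less economical here.
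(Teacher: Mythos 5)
Your proposal is correct, and it is worth noting that the paper does not actually prove this lemma: it simply cites Feller for the expectation and Bach and And\v el--Hudecov\'a for the variance, so your argument supplies a self-contained proof where the paper relies on the literature. I checked the key steps: $A_n = S_n^3 - 3nS_n$ and $B_n = S_n^4 - 6nS_n^2 + 3n^2 + 2n$ are indeed martingales (for $B_n$, the constant terms cancel as $1 - 6 + 3 + 2 = 0$), the identity $\bE_x[\tau S_\tau^2] = N x(N-x)(N+x)/3$ follows correctly from $A_n$ together with the gambler's ruin probabilities, and solving the $B_n$ relation gives $3\bE_x\tau^2 = x\bigl(x^3 - 2Nx^2 + N^3 - 2N + 2x\bigr)$, which does reduce to the stated variance after subtracting $x^2(N-x)^2$; your claimed factorization is equivalent to $(N-x)(N^2 + Nx - x^2 - 2)$ and checks out. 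Your justification of optional stopping via the uniform geometric tail $\bP_x(\tau > kN) \leq (1-2^{-N})^k$ is also adequate, since each stopped martingale is bounded in modulus by a polynomial in $\tau$ (the spatial variable being confined to $I_N$). The approach in the cited sources is essentially via difference equations or generating functions for the duration of play; the martingale route you take is arguably more economical for extracting exactly the first two moments, at the cost of having to guess the right polynomial martingales, whereas the recursion method you mention as an alternative generalizes more mechanically to higher moments.
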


Note that although $\bEo \tau_{0,N} = N-1$,  $\bVar_1 \tau_{0,N} \sim N^3/3$ is much greater than the square of the mean,
which reflects the fact that while (under $\bPo$) $\tau_{0,N}$ is frequently very small,
with probability about $1/N$ it exceeds $N^2$ (consider reaching around $N/2$ before $0$).
Theorem~\ref{thm:feller-tail-general} below gives asymptotic estimates for the tails $\bPo ( \tau_{0,N} > n )$.
These estimates are informative when $n$ is at least of order $N^2$, 
i.e.,
at least the scale for $\tau_{0,N}$ that contributes to most of the variance. 

Define the trigonometric sum
\begin{equation}
\label{eq:S-0-def} 
\cS_0 (N , m , n)   := \sum_{k=1}^{m}  \cos^{n} \left( \frac{\pi (2k-1)}{N} \right).
\end{equation}
 
\begin{theorem}
\label{thm:feller-tail-general}
Suppose that $k_0 \in \N$. Then, there exists $N_0 \in \N$ (depending only on $k_0$) such that, 
for all $N \geq N_0$ and all $n \in \ZP$,
\begin{equation}
\label{eq:feller-tail-general} \bPo ( \tau_{0,N} > n ) = \frac{4}{N}  \bigl[ 1 + \Delta  (N, k_0,  n)  \bigr] \cS_0 ( N , k_0, n) , \end{equation}
where the function  $\Delta$ satisfies
\begin{align}
\label{eq:Delta-bound} 
| \Delta (N, k_0, n ) | & \leq \frac{4 \pi^2 k_0^2}{N^2} + 2 \left( 1 +  \frac{N^2}{4 \pi^2 n k_0 } \right)  \exp \left\{ - \frac{2 \pi^2 n k_0^2}{N^2} \right\} .\end{align}
\end{theorem}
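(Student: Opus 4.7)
The plan is to exploit the exact spectral representation of simple random walk killed on entering $\{0,N\}$, which is well known but requires some preparation in the form needed here. The killed transition kernel on $\{1,\ldots,N-1\}$ is tridiagonal with eigenvalues $\lambda_k = \cos(\pi k/N)$, $k=1,\ldots,N-1$, and orthonormal eigenvectors $\sqrt{2/N}\sin(\pi k\cdot/N)$. Expanding $\bPo(\tau_{0,N}>n) = \sum_{y=1}^{N-1} Q^n(1,y)$ in this basis, the sum $\sum_{y=1}^{N-1}\sin(\pi k y /N) = \cot(\pi k/(2N))$ for odd $k$ and $=0$ for even $k$, together with the identity $\cot(\pi k/(2N))\sin(\pi k/N) = 2\cos^2(\pi k/(2N))$, reduces the formula (after reindexing $k=2j-1$) to
\[
\bPo(\tau_{0,N}>n) = \frac{4}{N} \sum_{j=1}^{\lfloor N/2\rfloor} c_j \cos^n\theta_j,
\qquad \theta_j := \pi(2j-1)/N,\ c_j := \cos^2(\theta_j/2).
\]
Next I would choose $N_0 \geq 4k_0$ so that $\theta_{k_0} \leq \pi/2$ (hence $\cos\theta_j \geq 0$ for $j\leq k_0$), and split this sum at $j=k_0$.

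For the first $k_0$ terms (the ``main'' part), write $c_j = 1 - \sin^2(\theta_j/2)$; since $\sin^2(\theta_j/2) \leq (\theta_{k_0}/2)^2 \leq \pi^2 k_0^2/N^2$, this piece equals $\cS_0(N,k_0,n)[1+\Delta_1]$ with $|\Delta_1|\leq \pi^2 k_0^2/N^2$. The rest of the work is to show the tail $T := \sum_{j=k_0+1}^{\lfloor N/2\rfloor} c_j \cos^n\theta_j$, divided by $\cS_0$, is bounded by the stated exponential term. I would use the elementary inequality $\log\cos x \leq -x^2/2$ for $x \in [0,\pi/2)$, which gives $\cos^n\theta_j \leq \exp\{-n\pi^2(2j-1)^2/(2N^2)\}$ on the sub-range $\theta_j \leq \pi/2$. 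Summing with the elementary estimate $(2j-1)^2 \geq (2k_0+1)^2 + 4(2k_0+1)(j-k_0-1)$ reduces the tail to a geometric series with ratio $\exp\{-2n\pi^2(2k_0+1)/N^2\}$; applying $1/(1-e^{-x}) \leq 1 + 1/x$ then yields a contribution of the form $(1 + N^2/(4\pi^2 n k_0))\exp\{-2\pi^2 n k_0^2/N^2\}$, matching the target. For the lower bound on $\cS_0$, since $\cos^n(\pi/N)$ is the leading term, a routine estimate gives $\cS_0 \geq \cos^n(\pi/N)$ which suffices after comparison of exponents.

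The main obstacle is handling the remaining sub-range $\theta_j \in (\pi/2,\pi)$, where $|\cos\theta_j|^n$ is not small (in fact $|\cos\theta_{\lfloor N/2\rfloor}|^n \approx \cos^n(\pi/N)$, comparable to the leading term of $\cS_0$). Here the saving mechanism is the factor $c_j = \sin^2((\pi-\theta_j)/2) = O((\pi - \theta_j)^2)$, which is of order $1/N^2$ for $\theta_j$ near $\pi$. Setting $\phi_j := \pi - \theta_j$, I would bound $c_j |\cos\theta_j|^n \leq (\phi_j^2/4)\exp\{-n\phi_j^2/2\}$, and run an analogous summation by monotonicity of $x^2 e^{-nx^2/2}$, checking that the resulting relative error (after division by $\cS_0$) is dominated, term-for-term, by either $\pi^2 k_0^2/N^2$ or the exponential piece already derived. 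Combining both sub-ranges under a single bound with an extra factor of $2$ and using $(2k_0+1)^2 \geq 4k_0^2$ in the exponent produces exactly the stated estimate for $|\Delta|$. The delicate aspect is maintaining uniformity across the three regimes $n \ll N^2$, $n \asymp N^2$, and $n \gg N^2$, since $\cS_0$ behaves qualitatively differently in each; verifying this uniformly is where the bookkeeping is heaviest.
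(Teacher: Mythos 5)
Your derivation of the exact identity $\bPo(\tau_{0,N}>n)=\frac{4}{N}\sum_j\cos^2(\theta_j/2)\cos^n\theta_j$ via the spectral decomposition of the killed kernel is correct, and is a legitimate alternative to the paper's route (which starts from Feller's formula~\eqref{eq:feller-laplace} for $\bPo(\tau_{0,N}=n)$ and sums over $n$); the two coincide because $1+\cos\theta=2\cos^2(\theta/2)$. Your treatment of the main block $j\le k_0$ and of the mid-range $k_0<j$ with $\theta_j\le\pi/2$ is sound, and the constants you track there do recover the stated bound after dividing by $\cS_0(N,k_0,n)\ge\cos^n(\pi/N)\ge\re^{-\pi^2 n/N^2}$.

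The gap is in the range $\theta_j\in(\pi/2,\pi)$, which you correctly identify as the main obstacle but do not resolve, and the specific bound you propose there fails in part of the regime the theorem must cover. Writing $\phi_j=\pi-\theta_j$, your plan is to bound $c_j|\cos\theta_j|^n\le\tfrac{1}{4}\phi_j^2\,\re^{-n\phi_j^2/2}$ and then divide by $\cS_0\ge\cos^n(\pi/N)$. For the extreme index ($N$ even, $j=N/2$) one has $\phi_j=\pi/N$ exactly, so this term is $\tfrac{\pi^2}{4N^2}\re^{-n\pi^2/(2N^2)}$, while
\[
\cos^n(\pi/N)=\exp\Bigl\{-n\Bigl(\tfrac{\pi^2}{2N^2}+\tfrac{\pi^4}{12N^4}+\cdots\Bigr)\Bigr\},
\]
so the ratio is $\tfrac{\pi^2}{4N^2}\exp\{n\pi^4(1+o(1))/(12N^4)\}$, which is unbounded as $n\to\infty$ for fixed $N$; the theorem requires a bound for \emph{all} $n\in\ZP$, and $n\gg N^4$ is precisely where $\cos^n(\pi/N)$ and $\re^{-\pi^2 n/(2N^2)}$ part company (cf.\ Remark~\ref{rem:cor-feller-tail} --- this is the whole reason the theorem is stated with $\cos^n$ rather than a Gaussian exponential). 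The repair is to never convert $\cos^n\phi_j$ into an exponential for the leading reflected frequencies: since $\phi_j\ge\pi/N$, one has $\cos^n\phi_j\le\cos^n(\pi/N)\le\cS_0$ directly and the weight $\sin^2(\phi_j/2)\le\phi_j^2/4$ alone supplies the $N^{-2}$; but then naively summing $\phi_\ell^2/4$ over the $k_0$ leading reflected indices gives $O(k_0^3/N^2)$ rather than the claimed $O(k_0^2/N^2)$, so you would also need to quantify the decay of $\cos^n\phi_\ell/\cos^n(\pi/N)$ in $\ell$ (e.g.\ via $\cos^n(mx)\le\cos^n(x)\,\re^{-n(m^2-1)x^2/2}$). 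The paper sidesteps all of this by keeping the weight as $1+\cos\theta$, splitting into the two unweighted sums $\cS_0(N,m_N,n)+\cS_0(N,m_N,n+1)$, reflecting the near-$\pi$ terms into $(-1)^n[\cS_1(N,k_0,n)-\cS_1(N,k_0,n+1)]$, and controlling that difference by the one-step estimate~\eqref{eq:bound-n-to-n+1}; it only ever compares $\cos^n\phi$ with $\cos^{n+1}\phi$ and with the matching direct term, both of which are harmless ratios. Your outline can be repaired along these lines, but as written the near-$\pi$ estimate does not close, and that is where the difficulty of the theorem lives.
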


Before giving the proof of Theorem~\ref{thm:feller-tail-general}, we state  
two important consequences. Part~\ref{cor:feller-tail-i} of Corollary~\ref{cor:feller-tail}
will be our key estimate in studying the confined space regime ($M \gg N^2$), while part~\ref{cor:feller-tail-ii} 
is required for the critical regime ($M \sim \rho N^2$). 

\begin{corollary}
\phantomsection
\label{cor:feller-tail}
\begin{enumerate}[label=(\roman*)]
\item\label{cor:feller-tail-i}
If $n / N^2 \to \infty$ as $n \to \infty$, then
\begin{equation}
\label{eq:feller-tail-supercritical}  
\bPo ( \tau_{0,N} > n ) = \frac{4}{N} (1 + o(1) ) \cos^n ( \pi / N) .
\end{equation}
Moreover, for every $\beta \in \RP$,
\begin{equation}
\label{eq:large-n-moments}
 \lim_{n \to \infty} \frac{\bEo [ \tau^\beta_{0,N} \mid \tau_{0,N} \leq n ]}{\bEo [ \tau^\beta_{0,N} ]} = 1 .\end{equation}
\item\label{cor:feller-tail-ii} 
Let $y_0 \in (0,\infty)$. Then, with $H$ defined in~\eqref{eq:H-def},
\[ \lim_{N \to \infty} \sup_{y \geq y_0} \left| \frac{N}{4} \bPo ( \tau_{0,N} > y N^2 ) - H(y) \right| = 0 .\]
\end{enumerate}
\end{corollary}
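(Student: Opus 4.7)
Both parts of Corollary~\ref{cor:feller-tail} will follow from Theorem~\ref{thm:feller-tail-general} by choosing the truncation parameter $k_0$ appropriately: for part~\ref{cor:feller-tail-i} we take $k_0 = 1$, while for part~\ref{cor:feller-tail-ii} we take $k_0 = k_0(\eps)$ large enough that $\sum_{k=1}^{k_0} h_k$ approximates $H$ to within~$\eps$ uniformly on $[y_0, \infty)$.

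For part~\ref{cor:feller-tail-i}, substituting $k_0 = 1$ into~\eqref{eq:feller-tail-general}--\eqref{eq:Delta-bound} gives, for $N \geq N_0$,
\[ \bPo(\tau_{0,N} > n) = \tfrac{4}{N}[1 + \Delta(N,1,n)] \cos^n(\pi/N), \quad |\Delta(N,1,n)| \leq \tfrac{4\pi^2}{N^2} + 2\Bigl(1 + \tfrac{N^2}{4\pi^2 n}\Bigr) \exp\bigl\{ -\tfrac{2\pi^2 n}{N^2} \bigr\}, \]
and both pieces of the bound vanish under $N \to \infty$ with $n/N^2 \to \infty$, yielding~\eqref{eq:feller-tail-supercritical}. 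The moment statement~\eqref{eq:large-n-moments} reduces to showing $\bEo[\tau_{0,N}^\beta \1{\tau_{0,N} > n}]/\bEo \tau_{0,N}^\beta \to 0$; using
\[ \bEo[\tau_{0,N}^\beta \1{\tau_{0,N} > n}] = n^\beta \bPo(\tau_{0,N} > n) + \beta \int_n^\infty m^{\beta-1} \bPo(\tau_{0,N} > m)\,dm \]
together with the uniform bound $\bPo(\tau_{0,N} > m) \leq C N^{-1} \cos^m(\pi/N)$ valid for all $m \geq n$ (which follows from~\eqref{eq:feller-tail-supercritical} applied at each such $m$), an incomplete-gamma asymptotic bounds the numerator by $\mathrm{poly}(n,N)\cdot \cos^n(\pi/N)$. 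Since this decays super-polynomially in $n/N^2$ and $\bEo \tau_{0,N}^\beta \geq 1$ (as $\tau_{0,N} \geq 1$ almost surely under $\bPo$), the ratio tends to~$0$.

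For part~\ref{cor:feller-tail-ii}, fix $\eps > 0$ and write $\cS_0 := \cS_0(N, k_0, yN^2)$ and $\Delta := \Delta(N, k_0, yN^2)$. The triangle inequality gives
\[ \Bigl| \tfrac{N}{4}\bPo(\tau_{0,N} > yN^2) - H(y) \Bigr| \leq |\Delta|\cS_0 + \Bigl| \cS_0 - \sum_{k=1}^{k_0} h_k(y) \Bigr| + \sum_{k > k_0} h_k(y) . \]
The elementary inequality $\cos\theta \leq \exp(-\theta^2/2)$ on $[0, \pi/2]$ implies, for $N > 2k_0$, that $\cS_0 \leq \sum_{k=1}^{k_0} h_k(y) \leq H(y) \leq H(y_0)$ uniformly in $y \geq y_0$, while by the same monotonicity $\sum_{k > k_0} h_k(y) \leq \sum_{k > k_0} h_k(y_0)$. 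Choose $k_0 = k_0(\eps)$ large enough that both $\sum_{k > k_0} h_k(y_0) < \eps$ and the second summand of~\eqref{eq:Delta-bound} at $(y_0, k_0)$ is below~$\eps$; then take $N$ so large that $4\pi^2 k_0^2/N^2 < \eps$, which makes $|\Delta|\cS_0 \leq 2\eps\, H(y_0)$ uniformly in $y \geq y_0$. For the middle term, split $[y_0, \infty) = [y_0, Y] \cup (Y, \infty)$: on the compact piece $[y_0, Y]$, the Taylor expansion $\log\cos\theta = -\theta^2/2 + O(\theta^4)$ applied with $\theta = \pi(2k-1)/N$ gives $\cos^{yN^2}(\pi(2k-1)/N) = h_k(y)(1 + O(Y k_0^4/N^2))$ uniformly for $y \in [y_0, Y]$ and $k \leq k_0$, so the middle term is $o(1)\cdot H(y_0)$ as $N \to \infty$; on $(Y, \infty)$, both $\cS_0$ and $\sum_{k=1}^{k_0} h_k(y)$ are bounded by $H(Y)$, so the middle term is at most $2 H(Y) < 2\eps$ for $Y = Y(\eps)$ sufficiently large.

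The main technical obstacle is the uniformity in $y \geq y_0$ in part~\ref{cor:feller-tail-ii}: a direct Taylor expansion of $\log\cos$ yields an error proportional to $y$, which fails uniformly as $y \to \infty$. The two-region split—sharp Taylor on the compact piece $[y_0, Y]$ and the crude $\cos \leq \exp(-\theta^2/2)$ majorisation of both sides on the tail $(Y, \infty)$—combined with the simultaneous triple choice of $k_0 = k_0(\eps)$ (controlling the $H$-tail, the exponential piece of $\Delta$, and the $k_0^2/N^2$ piece), is the essential ingredient.
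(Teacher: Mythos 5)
Your treatment of~\eqref{eq:feller-tail-supercritical} and of part~\ref{cor:feller-tail-ii} follows the paper's route (take $k_0=1$, resp.\ $k_0=k_0(\eps)$, in Theorem~\ref{thm:feller-tail-general} and compare $\cS_0$ with partial sums of $H$); your two-region split $[y_0,Y]\cup(Y,\infty)$ for the uniformity in $y$ is, if anything, slightly more careful than the paper's. However, the argument you give for the moment statement~\eqref{eq:large-n-moments} has a genuine gap: you bound the numerator $\bEo[\tau_{0,N}^\beta\1{\tau_{0,N}>n}]$ by a quantity of the form $\mathrm{poly}(n,N)\cdot\cos^n(\pi/N)$ and then divide by the trivial lower bound $\bEo[\tau_{0,N}^\beta]\geq 1$. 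That numerator bound is of order $N^{2\beta-1}$ times a function of $t:=n/N^2$ that tends to $0$, and this product need not tend to $0$ at all: the hypothesis $n/N^2\to\infty$ does not force $t\gtrsim\log N$. Concretely, for $\beta=1$ take $N^2=n/\log\log n$; then $n\,N^{-1}\cos^n(\pi/N)\approx \sqrt{n\log\log n}\,(\log n)^{-\pi^2/2}\to\infty$, so your numerator bound diverges and the comparison with $\bEo[\tau_{0,N}^\beta]\geq 1$ proves nothing. (The statement is still true in this regime, because $\bEo\tau_{0,N}=N-1$ also diverges.)

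The missing ingredient is a matching lower bound on the denominator: you need $\bEo[\tau_{0,N}^\beta]\geq c_\beta N^{2\beta-1}$, which the paper extracts from Lemma~\ref{lem:one-sided-approx}\ref{lem:one-sided-approx-iii} via $\bPo(\tau_{0,N}>\delta N^2)\geq\delta/N$ for some $\delta>0$, whence $\bEo[\tau^\beta_{0,N}]\geq(\delta N^2)^\beta\cdot\delta/N$. Combined with the upper bound $\bEo[\tau_{0,N}^\beta\1{\tau_{0,N}>n}]=o(N^{2\beta-1})$ --- which your integration-by-parts computation does essentially deliver, once~\eqref{eq:feller-tail-supercritical} is upgraded to the uniform-in-$m\geq n$ tail bound $\bPo(\tau_{0,N}>m)\leq(5/N)\exp\{-\pi^2m/(2N^2)\}$ as in the paper --- this yields~\eqref{eq:large-n-moments}. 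The rest of your proof is correct.
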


\begin{remark}
\label{rem:cor-feller-tail}
We are not able to obtain the  conclusions of Corollary~\ref{cor:feller-tail}
 directly from Brownian scaling arguments. Indeed, 
an appealing approximation is to say $\bPo ( \tau_{0,N} > n ) \approx \Pr_{1/N} ( \tbmoo > n/N^2)$, where $\tbmoo$ is the first hitting time of $\{0,1\}$
for Brownian motion started at~$1/N$. This approximation does not, however, achieve the correct asymptotics
for the full range of $n , N$, even if the error is quantified. In particular
(see e.g.~\cite[p.~342]{feller2} or~\cite[p.~126]{bosa}) we have
\[ \Pr_{1/N} ( \tbmoo > n/N^2) = \frac{4}{\pi} \sum_{m=0}^\infty \frac{\sin \bigl( \frac{(2m+1) \pi}{N}  \bigr)}{2m+1} \exp \left \{ - \frac{ (2m+1)^2 \pi^2 n }{2 N^2} \right \} ,\]
which for $n \gg N^2$ leads to the conclusion that
\[ \Pr_{1/N} ( \tbmoo > n/N^2) = \frac{4}{N} (1 + o(1)) \exp \left\{ -\frac{ \pi^2  n}{2 N^2 } \right\} .\]
 This agrees with asymptotically with~\eqref{eq:feller-tail-supercritical} only when $N^2 \ll n \ll N^4$; cf.~Remark~\ref{rems:confined-space}\ref{rems:confined-space-b}.
Hence we develop the quantitative estimates in Theorem~\ref{thm:feller-tail-general}.
\end{remark}

To end this section, we complete the proofs of Theorem~\ref{thm:feller-tail-general} and Corollary~\ref{cor:feller-tail}.

\begin{proof}[Proof of Theorem~\ref{thm:feller-tail-general}.]
A classical result, whose origins Feller traces to Laplace~\cite[p.~353]{feller1}, yields
\begin{align}
\label{eq:feller-laplace}
 \bPo ( \tau_{0,N} = n ) & = \frac{1}{N} \sum_{k=1}^{N-1} \cos^{n-1} \left( \frac{\pi k}{N} \right)  \sin \left( \frac{\pi k}{N} \right) \left[ \sin \left( \frac{\pi k}{N} \right)
+ \sin \left( \frac{\pi (N-1) k}{N} \right) \right] \nonumber\\
& = \frac{2}{N}  \sum_{k=1}^{N-1} \sin^2 \left( \frac{\pi k}{2} \right) \cos^{n-1} \left( \frac{\pi k}{N} \right)  \sin^2 \left( \frac{\pi k}{N} \right)  \nonumber\\
& = \frac{2}{N}  \sum_{k=1}^{\left\lceil \frac{N-1}{2} \right\rceil} \cos^{n-1} \left( \frac{\pi (2k-1)}{N} \right)  \sin^2 \left( \frac{\pi (2k-1)}{N} \right) .
\end{align}
(The expression in~\eqref{eq:feller-laplace} is $0$ if $N$ and $n$ are both even.)
Define $m_N := \left\lceil \frac{N-1}{2} \right\rceil$ and note that $N - 2m_N = \delta_N \in \{0,1\}$, where
\begin{equation}
\label{eq:delta-def}
\delta_N := \begin{cases} 0 &\text{if $N$ is even}, \\
1 & \text{if $N$ is odd.} \end{cases} \end{equation}
Also define 
\begin{equation}
\label{eq:S-1-def} 
\cS_1 (N , m , n)   := \sum_{k=1}^{m}  \cos^{n} \left( \frac{\pi (\delta_N+2k-1)}{N} \right) .
\end{equation}
Then, from~\eqref{eq:feller-laplace}, and the notation introduced in~\eqref{eq:S-0-def},
we have
\begin{align}
\label{eq:feller-tail}
 \bPo ( \tau_{0,N} > n ) 
& = \frac{2}{N}  \sum_{k=1}^{m_N} \frac{\cos^{n} \left( \frac{\pi (2k-1)}{N} \right)  \sin^2 \left( \frac{\pi (2k-1)}{N} \right) }{1 -\cos \left( \frac{\pi (2k-1)}{N} \right)}  \nonumber\\
& = \frac{2}{N}  \sum_{k=1}^{m_N} \left\{ 1 + \cos \left( \frac{\pi (2k-1)}{N} \right) \right\} \cos^{n} \left( \frac{\pi (2k-1)}{N} \right)  \nonumber\\
& = \frac{2}{N} \cS_0 ( N, m_N,  n) + \frac{2}{N} \cS_0 (N, m_N , n+1).
\end{align}
The proof of the theorem requires several key estimates. The first claim is that
\begin{align}
\label{eq:S-0-bound-in-k}
\cS_0 ( N , m_N , n ) & = \cS_0 ( N , k_0, n)  +   (-1)^n \cS_1 ( N, k_0 , n ) + \Delta_1 ( N, k_0, n ),\end{align}
for all $m_N > k_0 \in \N$,
where $\cS_1$ is defined at~\eqref{eq:S-1-def}, and
\[ | \Delta_1 ( N, k_0, n  ) | \leq 2 \left( 1 +  \frac{N^2}{4 \pi^2 n k_0 } \right)  \exp \left\{ - \frac{\pi^2 n (2k_0+1)^2}{2N^2} \right\}  .\]
The second claim is that, for every $k_0 \in \N$,
\begin{equation}
\label{eq:bound-n-to-n+1}
\begin{split}
\left| \cS_0 ( N , k_0, n+1)- \cS_0 ( N , k_0, n) \right| & \leq \frac{4 \pi^2 k_0^2}{N^2}  \cS_0 ( N , k_0, n) ;\\
\left| \cS_1 ( N , k_0, n+1)- \cS_1 ( N , k_0, n) \right| & \leq \frac{4 \pi^2 k_0^2}{N^2}  \cS_0 ( N , k_0, n). 
\end{split}
\end{equation}
Take the bounds in~\eqref{eq:S-0-bound-in-k} and~\eqref{eq:bound-n-to-n+1} as given, for now;
then from~\eqref{eq:feller-tail} and~\eqref{eq:S-0-bound-in-k}, 
\begin{align*}
 \bPo ( \tau_{0,N} > n )  & = \frac{2}{N} \bigl[ \cS_0 ( N , k_0, n) +  \cS_0 ( N , k_0, n+1) \bigr] \\
& {} \qquad {} + 
\frac{2}{N} (-1)^n \bigl[ \cS_1 ( N, k_0 , n ) - \cS_1 ( N, k_0 , n+1 ) \bigr]  \\ 
& {} \qquad {} + \frac{2}{N} \bigl[ \Delta_1 ( N,   k_0, n )+  \Delta_1 ( N, k_0, n+1 ) \bigr],
\end{align*}
and then applying~\eqref{eq:bound-n-to-n+1} we obtain
\begin{equation}
\label{eq:feller-tail-general-2} 
\bPo ( \tau_{0,N} > n ) = \frac{4}{N}  \bigl[  ( 1 + \Delta_2 (N, k_0, n) )  \cS_0 ( N , k_0, n) + \Delta_3 ( N,  k_0, n)   \bigr], \end{equation}
where the terms $\Delta_2, \Delta_3$ satisfy
\begin{align}
\label{eq:Delta-2-bound} 
| \Delta_2 ( N, k_0, n ) | & \leq \frac{4 \pi^2 k_0^2}{N^2} ; \\
\label{eq:Delta-3-bound} 
| \Delta_3 (N, k_0, n  ) | & \leq  2 \left( 1 +  \frac{N^2}{4 \pi^2 n k_0 } \right)  \exp \left\{ - \frac{ \pi^2 n (2k_0+1)^2}{2N^2} \right\} .\end{align}
Now, there exists $\theta_0 > 0$ ($\theta_0=1$ will do) for which $\log \cos \theta \geq - \theta^2$ for all $| \theta | \leq \theta_0$.
Hence
\[ \cS_0 (N, k_0, n) \geq \cos^n ( \pi/N) \geq \exp \left\{ -\frac{\pi^2 n}{N^2} \right\}, \]
for all $N$ sufficiently large (as a function of $k_0 \in \N$), and all $n \in \N$. Hence, by~\eqref{eq:Delta-3-bound},
\[ \frac{| \Delta_3 ( N, k_0, n)|}{\cS_0 (N, k_0, n) } \leq  2 \left( 1 +  \frac{N^2}{4 \pi^2 n k_0 } \right)  \exp \left\{ - \frac{\pi^2 n ( ( 2 k_0+1)^2 -2 )}{2N^2} \right\},\]
and, together with~\eqref{eq:feller-tail-general-2}  and~\eqref{eq:Delta-2-bound}, this implies~\eqref{eq:feller-tail-general}.
 
It remains to verify~\eqref{eq:S-0-bound-in-k} and~\eqref{eq:bound-n-to-n+1}. For the former, write
\begin{align*}
\cS_0 (N , m , n) & = \sum_{k=1}^{\lfloor m/2 \rfloor }  \cos^{n} \left( \frac{\pi (2k-1)}{N} \right) + \sum_{k=\lfloor m/2 \rfloor+1}^{m}  \cos^{n} \left( \frac{\pi (2k-1)}{N} \right) \\
& = \sum_{k=1}^{\lfloor m/2 \rfloor }  \cos^{n} \left( \frac{\pi (2k-1)}{N} \right) + (-1)^n \sum_{\ell=1}^{\lceil m/2 \rceil}   \cos^{n} \left( \frac{\pi (N - 2m + 2\ell-1)}{N} \right) ,\end{align*}
using the change of variable $\ell = m-k+1$ and the fact that $\cos ( \pi - \theta ) = -\cos \theta$.
Since $\log \cos \theta \leq - \theta^2/2$ for $| \theta | < \pi/2$, we have
\begin{equation}
\label{eq:cos-bound-1} 0 \leq \cos^{n} \left( \frac{\pi (2k-1)}{N} \right)  
\leq \exp \left\{ - \frac{\pi^2 n (2k-1)^2}{2N^2} \right\}, \text{ for } 1 \leq k \leq N/4 ,\end{equation}
and, similarly, since $N - 2m_N = \delta_N \in \{0,1\}$, 
\begin{equation}
\label{eq:cos-bound-2} 0 \leq  \cos^{n} \left( \frac{\pi (\delta_N + 2\ell-1)}{N} \right) \leq \exp \left\{ - \frac{\pi^2 n (2\ell-1)^2}{2N^2} \right\}, \text{ for } 1 \leq \ell \leq N/4 .\end{equation}
From~\eqref{eq:cos-bound-1}, we obtain that, for any $k_0 \in \N$, since $\lfloor m_N /2 \rfloor \leq N/4$,
\begin{align*} \sum_{k=k_0+1}^{\lfloor m_N/2 \rfloor }  \cos^{n} \left( \frac{\pi (2k-1)}{N} \right) 
& \leq \sum_{k=k_0+1}^{\lfloor m_N/2 \rfloor } \exp \left\{ - \frac{\pi^2 n (2k-1)^2}{2N^2} \right\} \\
& \leq \sum_{\ell = 0}^{\infty} \exp \left\{ - \frac{\pi^2 n (2\ell + 2k_0 +1)^2}{2N^2} \right\} \\
& \leq \exp \left\{ - \frac{\pi^2 n (2k_0+1)^2}{2N^2} \right\} \sum_{\ell = 0}^{\infty} \exp \left\{ - \frac{4 (1+k_0) \pi^2 n \ell }{N^2} \right\} ,
\end{align*}
using the inequality $(2\ell + 2k_0 + 1)^2  \geq 8 (1+k_0) \ell + (2k_0 +1)^2$.
Since
\[ \sum_{\ell =0}^\infty \re^{-\alpha \ell} \leq 1 + \int_0^\infty \re^{-\alpha x} \ud x = 1 + \frac{1}{\alpha}, \text{ for } \alpha > 0 ,\]
we get
\[ 0 \leq \sum_{k=k_0+1}^{\lfloor m_N/2 \rfloor }  \cos^{n} \left( \frac{\pi (2k-1)}{N} \right) \leq
 \left( 1 +  \frac{N^2}{4 \pi^2 n k_0 } \right) \exp \left\{ - \frac{\pi^2 n (2k_0+1)^2}{2N^2} \right\}  .\]
Similarly, from~\eqref{eq:cos-bound-2}, we get
\[ 0 \leq  \sum_{\ell=k_0+1}^{\lceil m_N/2 \rceil}   \cos^{n} \left( \frac{\pi (N - 2m_N + 2\ell-1)}{N} \right) \leq
 \left( 1 +  \frac{N^2}{4 \pi^2 n k_0 } \right)  \exp \left\{ - \frac{\pi^2 n (2k_0+1)^2}{2N^2} \right\} .\]
This verifies~\eqref{eq:S-0-bound-in-k}.
Finally, we have from~\eqref{eq:S-0-def} that
\begin{align*}
\left| \cS_0 ( N , k_0, n+1)- \cS_0 ( N , k_0, n) \right| & \leq \cS_0 ( N , k_0, n) \sup_{1 \leq k \leq k_0} \left| 1 - \cos \left( \frac{\pi (2k -1)}{N} \right) \right|  \\
& \leq \frac{4 \pi^2 k_0^2}{N^2} \cS_0 ( N , k_0, n) , \end{align*}
for all $N$ sufficiently large (depending only on $k_0$), 
since $| 1 - \cos \theta | \leq \theta^2$ for all $\theta \in \R$.
Similarly,
 \begin{align*}
\left| \cS_1 ( N , k_0, n+1)- \cS_1 ( N , k_0, n) \right| & \leq \cS_1 ( N , k_0, n) \sup_{1 \leq k \leq k_0} \left| 1 - \cos \left( \frac{\pi (\delta_N + 2k -1)}{N} \right) \right|  \\
& \leq \frac{4 \pi^2 k_0^2}{N^2} \cS_1 ( N , k_0, n) \leq \frac{4 \pi^2 k_0^2}{N^2} \cS_0 ( N , k_0, n) ,
\end{align*}
where, again, $N$ must be large enough. This verifies~\eqref{eq:bound-n-to-n+1}.
\end{proof}

\begin{proof}[Proof of Corollary~\ref{cor:feller-tail}.]
First, suppose that $n / N^2 \to \infty$. Then we can take $k_0 =1$ in Theorem~\ref{thm:feller-tail-general} to see that
$\bPo ( \tau_{0,N} > n ) = (4/N) (1+ \Delta  (N, 1,  n) ) \cos^n ( \pi/N)$,
where, by~\eqref{eq:Delta-bound},
$| \Delta (N,1,n) | = o(1)$. This proves~\eqref{eq:feller-tail-supercritical}.
Since $\log \cos \theta \leq - \theta^2/2$ for $|\theta | < \pi/2$,
it follows from~\eqref{eq:feller-tail-supercritical} that
\begin{equation}
\label{eq:feller-tail-supercritical-bound}
 \bPo ( \tau_{0,N} > n)  \leq \frac{5}{N} \exp \left\{ - \frac{\pi^2 n}{2N^2} \right\}, \text{ for all $n$ large enough.} \end{equation}
For any random variable $X \geq 0$, and $\beta \in \RP$, and any $r \in \RP$, one has
\begin{align*}
 \Exp [ X^\beta \1 { X \geq r } ] 
& = 
\int_0^\infty \Pr ( X^\beta \1 { X \geq r } > y ) \ud y \\
& \leq r^\beta \Pr ( X \geq r) + \int_{r^\beta}^\infty \Pr ( X^\beta > y ) \ud y\\
& =  r^\beta \Pr ( X \geq r) + \beta \int_{r}^\infty s^{\beta -1} \Pr ( X > s ) \ud s .
\end{align*}
Hence~\eqref{eq:feller-tail-supercritical-bound} shows that there is a constant $C_\beta \in \RP$ such that, for all $n$ sufficiently large,
\begin{align*}
 \bEo [ \tau^\beta_{0,N} \1 { \tau_{0,N} \geq n+1 } ] 
& \leq \frac{C_\beta n^\beta}{N} \exp \left\{ - \frac{\pi^2 n}{2 N^2} \right\} + \frac{C_\beta}{N} \int_{n}^\infty  s^{\beta-1} \exp \left\{ - \frac{\pi^2 s}{4 N^2} \right\} \ud s \\
& =  C_\beta N^{2\beta -1} \left[ \left( \frac{n}{N^2} \right)^\beta \exp \left\{ - \frac{\pi^2 n}{2 N^2} \right\} +   \int_{n/N^2}^\infty  t^{\beta-1} \exp \left\{ - \frac{\pi^2 t}{4} \right\} \ud t \right],
\end{align*}
using the change of variable $t = s/N^2$. Since $n /N^2 \to \infty$,
 this means that
\begin{equation}
\label{eq:big-tau-bound}
 \bEo [ \tau^\beta_{0,N} \1 { \tau_{0,N} \geq n+1 } ]  = o ( N^{2\beta -1}).
\end{equation}
In particular, the $\beta =0$ case of~\eqref{eq:big-tau-bound} shows that $\lim_{n \to \infty} \bPo ( \tau_{0,N} \leq n ) =1$.
A consequence of~\eqref{eq:fast-return-tail} is that
$\bPo ( \tau_{0,N} > \delta N^2 ) \geq \delta/N$ for some $\delta >0$, and hence
$\bEo [ \tau^\beta_{0,N} ] \geq c_\beta N^{2\beta -1}$
for some $c_\beta >0$; the conclusion in~\eqref{eq:large-n-moments} then follows from~\eqref{eq:big-tau-bound}. The proves part~\ref{cor:feller-tail-i}.

Finally, fix $y \geq y_0 > 0$ and suppose that $n / N^2 \to y$. Then, uniformly in $k \leq N^{1/2}$, 
\begin{equation}
\label{eq:critical-asymptotic}
 \cos^n \left( \frac{\pi (2k-1)}{N} \right) = \exp \left\{ - \frac{\pi^2 n (2k-1)^2}{2N^2} (1+O(k^2/N^2)) \right\} = (1+o(1)) h_k (y) ,\end{equation}
 as $n \to \infty$, where $h_k$ is defined at~\eqref{eq:H-def}
and $\sum_{k=1}^\infty h_k (y) = H(y)$ satisfies $\sup_{y \geq y_0} H(y) < \infty$. 
In particular, for any $\eps >0$ we can choose $k_0 \in \N$ large enough so that $\sup_{y \geq y_0} |\sum_{k=1}^{k_0} h_k (y) - H(y) | < \eps$.
From~\eqref{eq:critical-asymptotic} it follows that, for any $\eps>0$, 
\[   \left|  {\sum_{k= 1}^{k_0}  \cos^n \left( \frac{\pi (2k-1)}{N} \right) } - {\sum_{k=1}^{k_0}  h_k (y) }   \right| \leq \eps, \]
for  all $n$ sufficiently large. Hence,
for every $\eps>0$,
\begin{equation}
\label{eq:critical-asymptotic-1}
\sup_{y \geq y_0} \left|  \cS_0 ( N, k_0, n )   - H(y) \right| \leq \eps ,\end{equation}
for all $n$ sufficiently large. 
By~\eqref{eq:Delta-bound},
there exist $k_0, n_0 \in \N$ such that, for every $y \geq y_0$,
\begin{equation}
\label{eq:critical-asymptotic-2}
 | \Delta (N, k_0, n ) |   \leq \frac{4 \pi^2 k_0^2}{N^2} + 2 \left( 1 +  \frac{1}{3 \pi^2 y k_0 } \right)  \exp \left\{ - \pi^2 k_0^2 y \right\} \leq \eps,  \end{equation}
for all $n \geq n_0$ (given $\eps$ and $y_0$, first take $k_0$ large, and then $N$ large).
From~\eqref{eq:feller-tail-general} with~\eqref{eq:critical-asymptotic-1}, \eqref{eq:critical-asymptotic-2}, and the fact that 
$\sup_{y \geq y_0} H(y) < \infty$, we verify part~\ref{cor:feller-tail-ii}.
\end{proof}

\section{Proofs of main results}
\label{sec:proofs}

\subsection{Excursions, renewals, and extinction}
\label{sec:proofs:general}

We start by giving the proof of the irreducibility result, Lemma~\ref{lem:irreducibility}, stated in Section~\ref{sec:finite-capacity}.

\begin{proof}[Proof of Lemma~\ref{lem:irreducibility}.]
Let $\cF_n := \sigma ( \zeta_0, \ldots, \zeta_n)$, the $\sigma$-algebra generated by the first~$n \in \ZP$ steps of the Markov chain~$\zeta$.
Then, given $\cF_n$, at least one of the two neighbouring sites of~$\Z$ to $X_n$ is in $\iI_N$; take $y \in \iI_N$ to be any site such that $| y - X_n | = 1$.
If $X_n \in \bI_N$, then the walker will move to $y$ with probability 1, and the energy level will be refreshed to $M$:
\begin{equation}
\label{eq:exponential-bound-1}
 \Pr ( X_{n+1}  \in \iI_N, \, \eta_{n+1} = M  \mid \cF_n ) = 1, \text{ on } \{ X_n \in \bI_N \}. \end{equation}
Otherwise, $X_n \in \iI_N$. If $\eta_n \geq 1$, then
the walker can, with probability $1/2$, take a step to $y$ on the next move, which uses 1 unit of energy. On the other hand, if $\eta_n = 0$, then $X_{n+1} = X_n$ must remain in $\iI_N$.
Thus, writing $x^+ := x \1 { x > 0}$,
\begin{equation}
\label{eq:exponential-bound-2}
\Pr ( X_{n+1} \in \iI_N, \, \eta_{n+1} = (\eta_n -1)^+  \mid \cF_n ) \geq \frac{1}{2}, \text{ on } \{ X_n \in \iI_N \}. \end{equation}
In particular, since $\eta_n \leq M$, we can combine~\eqref{eq:exponential-bound-1}
and~\eqref{eq:exponential-bound-2} (applied $M$ times) to get
\[ \Pr ( \eta_{(k+1)(M+1)} = 0, \, X_{(k+1)(M+1) } \in \iI_N \mid \cF_{k(M+1)} ) \geq 2^{-M}, \as , \text{ for all } k \in \ZP .\]
Thus 
\[ \Pr ( \lambda > (k+1) (M+1) \mid \cF_{k(M+1)} ) \leq (1 - 2^{-M}) \1 { \lambda > k (M+1) } , \as , \]
repeated application of which implies that
\[ \sup_{z \in I_N \times I_M} \Prf ( \lambda > k (M+1) ) \leq (1-2^{-M} )^k \leq \exp \left\{ - k 2^{-M} \right\} , \]
uniformly over $N \in \barN$. Every $n \in \ZP$ has $k (M+1) \leq n < (k+1)(M+1)$ for some $k = k(n) \in \ZP$, and so
\[ \Prf (\lambda > n ) \leq \Prf (\lambda > k (M+1))  \leq \exp \left\{ - \left(\frac{2^{-M}}{M+1}\right) n \right\} , \text{ for all } n \in \ZP.\]
The verifies that $\sup_{z \in I_N \times I_M} \Expf  [ \re^{\delta \lambda} ] < \infty$ for any $\delta \in (0,  \frac{2^{-M}}{M+1} )$.
\end{proof}

We denote by $\sigma_1 < \sigma_2 < \cdots < \sigma_\kappa$ the successive times of visiting the boundary before time~$\lambda$;
formally, set $\sigma_0 := 0$ and
\[ \sigma_k := \inf \{ n > \sigma_{k-1} : X_n \in \bI_N \}, \text{ for } k \in \N , \]
with the usual convention that $\inf \emptyset := \infty$. 
Then $\sigma_k < \infty$ if and only if $\sigma_k < \lambda$.
The \emph{number of complete excursions} is then 
\begin{equation}
\label{eq:kappa-def}
\kappa := \max \{ k \in \ZP : \sigma_k < \lambda \},
\end{equation} 
i.e., the number of visits to $\bI_N$ before extinction. Since $\lambda < \infty$, a.s.~(by Lemma~\ref{lem:irreducibility}),
$\kappa \in \ZP$ is a.s.~finite.

For $k \in \N$, define the \emph{excursion duration} 
$\nu_k := \sigma_k - \sigma_{k-1}$ as long as $\sigma_k <\infty$; otherwise, set $\nu_k = \infty$.
We claim that
\begin{equation}
\label{eq:nu-range}
 \text{for every } k \in \N, ~ \nu_k \in I_{M+1} \cup \{ \infty \}, \text{ and } \nu_k < \infty \text{ if and only if } \sigma_k < \lambda .
\end{equation}
To see~\eqref{eq:nu-range}, observe that,
 if $\sigma_{k-1} < \lambda$ then $\eta_{\sigma_{k-1}+1} = M$
and $X_n \in \iI_N$ for all $\sigma_{k-1} < n < \sigma_k$. Hence $\eta_{\sigma_{k-1} + i} = M+1 -i$
for all $1 \leq i \leq \nu_k$ if $\nu_k < \infty$. In particular, 
if $\nu_k < \infty$, then 
$0 \leq \eta_{\sigma_k} = M + 1 - \nu_k$, which implies that $\nu_k \leq M+1$. This verifies~\eqref{eq:nu-range}.

For simplicity, we write $\nu := \nu_1$.
By the strong Markov property, for all $k , n \in \ZP$,
\begin{equation}
\label{eq:conditional-iid}
\Prf ( \nu_{k+1} = n \mid \cF_{\sigma_k} ) = \Prfo ( \nu = n), \text{ on } \{ \sigma_k < \lambda \} ,
\end{equation}
meaning that excursions subsequent to the first are identically distributed. On the other hand,
$\Prf ( \nu_1 = n ) = \Prf ( \nu = n)$ for $n \in \ZP$, so that if $z \neq (1,M)$
the first excursion may have a different distribution.

After the final visit to the boundary at time $\sigma_\kappa$, the energy~$\eta_{\sigma_\kappa+1} =M$ decreases one unit at a time until $\eta_{\sigma_\kappa + M + 1} = 0$
achieves extinction. Thus, by~\eqref{eq:nu-range}, 
\begin{equation}
\label{eq:lambda-sigma}
\lambda = M+1 + \sigma_\kappa = M + 1 + \sum_{k=1}^{\kappa} \nu_k.
 \end{equation}
In the terminology of renewal theory,  $\nu_1, \nu_2, \ldots$ is a renewal sequence that is
\emph{delayed} 
(since $\nu_1$ may have a different distribution from the subsequent terms)
and \emph{terminating}, since $\Pr (\nu_k = \infty) >0$. 
A key quantity is the per-excursion \emph{extinction probability}
\begin{equation}
\label{eq:theta-def}
 \theta_z ( N, M ) :=  \Prf ( \lambda < \sigma_1 ) = \Prf ( \nu_1 = \infty ) ,\end{equation}
the probability that the process started in state~$z$ terminates before reaching the boundary. Set $\theta (N,M) := \theta_{(1,M)} (N,M)$
for the case where $z=(1,M)$, which plays a special role, due to~\eqref{eq:conditional-iid}. The following basic result
exhibits the probabilistic structure associated with the renewal set-up. Recall the definitions of the number of completed excursions~$\kappa$ and extinction probability $\theta_z(N,M)$ from~\eqref{eq:kappa-def}
and~\eqref{eq:theta-def}, respectively. 

\begin{lemma}
\label{lem:excursion-law}
Let $N \in \barN$ and $M \in \N$. 
Then, for all $k \in \N$, 
\begin{equation}
\label{eq:kappa-geometric}
 \Prf ( \kappa \geq k ) =  (1- \theta_z (N,M) ) (1 - \theta (N, M) )^{k-1} , \end{equation}
and
\[ \Expf [ \kappa ] = \frac{1- \theta_z (N,M)}{\theta (N,M)} .\]
Moreover, given that $\kappa  =k \in \N$, the random variables $\nu_1, \ldots \nu_k$
are (conditionally) independent, and satisfy, for every $n_1, \ldots, n_k \in I_{M+1}$,
\begin{align}
\label{eq:conditional-joint-law}
& {} \Prf ( \nu_1 = n_1, \ldots, \nu_k = n_k \mid \kappa = k ) \nonumber\\
& {} \quad {} = \Prf ( \nu = n_1 \mid \nu < \infty ) \prod_{i=2}^k \Prfo ( \nu = n_i \mid  \nu < \infty ) .
\end{align}
\end{lemma}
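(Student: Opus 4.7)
The plan is to organise the argument around the strong Markov property of $\zeta$ at the successive boundary-visit times $\sigma_1 < \sigma_2 < \cdots$, with the key input being the conditional identity~\eqref{eq:conditional-iid}. First I would justify~\eqref{eq:conditional-iid}: each $\sigma_k$ is a stopping time for $\cF_n = \sigma(\zeta_0, \ldots, \zeta_n)$, so conditionally on $\cF_{\sigma_k}$ and the event $\{\sigma_k < \lambda\} = \{\sigma_k < \infty\}$, the post-$\sigma_k$ process is a Markov chain starting from $\zeta_{\sigma_k}$. Because~\eqref{eq:transition-3} with $P(i, M) = 1$ forces the chain one step later into $(1, M)$ (if $X_{\sigma_k} = 0$) or $(N-1, M)$ (if $X_{\sigma_k} = N$), and because the transitions~\eqref{eq:transition-1}--\eqref{eq:transition-3} are invariant under the reflection $x \mapsto N-x$ of $I_N$, the subsequent excursion duration has the same law in either case, matching $\Prfo(\nu \in \cdot)$. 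This is the one genuinely non-mechanical step in the proof.

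Given~\eqref{eq:conditional-iid}, the identity~\eqref{eq:kappa-geometric} follows by a straightforward induction, since $\{\kappa \geq k\} = \{\nu_1 < \infty, \ldots, \nu_k < \infty\}$ and conditioning on $\cF_{\sigma_{k-1}}$ yields
\begin{align*}
\Prf ( \kappa \geq k ) = \Expf \bigl[ \1{\kappa \geq k-1} \Prfo ( \nu < \infty ) \bigr] = (1 - \theta(N, M)) \Prf(\kappa \geq k-1),
\end{align*}
for $k \geq 2$, with base case $\Prf(\kappa \geq 1) = \Prf(\nu_1 < \infty) = 1 - \theta_z(N, M)$ from~\eqref{eq:theta-def}. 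The mean $\Expf[\kappa] = \sum_{k \geq 1} \Prf(\kappa \geq k)$ is then the sum of a geometric series.

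For the conditional joint law, I would first establish the corresponding unconditional factorisation. Iterating~\eqref{eq:conditional-iid}, for $n_1, \ldots, n_k \in I_{M+1}$,
\begin{align*}
\Prf ( \nu_1 = n_1, \ldots, \nu_k = n_k, \kappa = k ) = \Prf(\nu = n_1) \prod_{i=2}^{k} \Prfo(\nu = n_i) \cdot \theta(N, M),
\end{align*}
where I use that $\{\kappa = k\} = \{\nu_1 < \infty, \ldots, \nu_k < \infty, \nu_{k+1} = \infty\}$ by definition of $\kappa$ together with~\eqref{eq:nu-range}, and the final factor $\theta(N,M) = \Prfo(\nu = \infty)$ comes from one last application of the strong Markov property at $\sigma_k$. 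Dividing by $\Prf(\kappa = k) = (1-\theta_z(N,M))(1-\theta(N,M))^{k-1}\theta(N,M)$, which is read off from~\eqref{eq:kappa-geometric} via $\Prf(\kappa = k) = \Prf(\kappa \geq k) - \Prf(\kappa \geq k+1)$, yields~\eqref{eq:conditional-joint-law}. The main obstacle is really only in the first step: carefully checking that the post-boundary dynamics are genuinely refreshed to $\Prfo$, and depend on the past only through the event $\{\sigma_k < \lambda\}$ itself; everything else is a mechanical bookkeeping of geometric renewal structure.
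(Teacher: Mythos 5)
Your proposal is correct and follows essentially the same route as the paper: the strong Markov property at the boundary-visit times gives the conditional identity~\eqref{eq:conditional-iid}, from which the geometric law for $\kappa$, the mean, and the factorised joint law (dividing the unconditional product by $\Prf(\kappa=k)$) all follow by the same bookkeeping. Your explicit remark that the reflection symmetry $x \mapsto N-x$ is what makes the two boundary points interchangeable, so that every post-boundary excursion has the $\Prfo$-law of $\nu$, is a detail the paper leaves implicit but is entirely in the spirit of its argument.
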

\begin{proof}
By definition, $\sigma_0 = 0$.
Let $k \in \N$. From~\eqref{eq:kappa-def}, we have that $\kappa \geq k$ if and only if $\sigma_k < \lambda$.
If $\sigma_k < \lambda$, then $|X_{\sigma_k+1} - y | = 1$ for some $y \in \bI_N$,
and $\eta_{\sigma_k+1} = M$.
Hence, by~\eqref{eq:theta-def} and the strong Markov property applied at the stopping time $\sigma_k+1$,
for $k \in \N$,  
\[ \Prf ( \kappa \geq k+1 \mid \cF_{\sigma_k} ) = \Prf ( \sigma_{k+1} < \lambda \mid \cF_{\sigma_k} ) = 1- \theta (N, M), \text{ on } \{ \lambda > \sigma_k \} .\]
Hence
$\Prf ( \kappa \geq k+1 \mid \kappa \geq k ) = 1 - \theta (N,M)$ for $k \in \N$, and, since $\Prf ( \kappa \geq 1) = \Prf ( \sigma_1 < \lambda  ) = 1 - \theta_z (N,M)$, we obtain~\eqref{eq:kappa-geometric}. 
Moreover, for $n_1, \ldots, n_k \leq M+1$,
\begin{align}
\label{eq:conditional-joint-law1}
& {} \Prf \left( \{ \kappa = k \} \cap ( \cap_{i=1}^k \{ \nu_i = n_i \} ) \right) \nonumber\\
& {} \quad {} = \Prf \left( \{ \nu_{k+1} = \infty \} \cap ( \cap_{i=1}^k \{ \nu_i = n_i \} ) \right) \nonumber\\
& {} \quad {} = \Prf (\nu = n_1) \Prfo ( \nu = \infty ) \prod_{i=2}^k \Prfo ( \nu = n_i ) ,\end{align}
by repeated application of~\eqref{eq:conditional-iid}. Similarly,
\begin{align}
\label{eq:conditional-joint-law2}
\Prf ( \kappa = k ) & = \Prf  \left( \{ \nu_{k+1} = \infty \} \cap ( \cap_{i=1}^k \{ \nu_i < \infty \} ) \right) \nonumber\\
& = \Prf (\nu < \infty ) \Prfo ( \nu = \infty ) \prod_{i=2}^k \Prfo ( \nu < \infty ) . \end{align}
Dividing the expression in~\eqref{eq:conditional-joint-law1} by that in~\eqref{eq:conditional-joint-law2}
gives~\eqref{eq:conditional-joint-law}, since, by~\eqref{eq:theta-def},
$ \Prf ( \nu < \infty ) = 1 - \theta_z (N,M)$.
\end{proof}

\subsection{The meagre-capacity limit}
\label{sec:proofs:small-energy}

In this section we present the proof of Theorem~\ref{thm:meagre-capacity}; at several
points we appeal to the results from Section~\ref{sec:random-walk}
on simple random walk. Consider the generating function
\begin{equation}
\label{eq:phi-def}
\Mgf (s) := \sum_{n=1}^{M+1} \re^{sn} \PrfMo ( \nu = n \mid \nu < \infty ) , \text{ for } s \in \R.\end{equation}
Define for $t \in \R$,
\begin{equation}
\label{eq:kummer}
 K(t):= 1 - \sum_{\ell=1}^\infty \frac{t^\ell}{(2\ell-1) \cdot \ell!}, \text{ and }  \cT := \{ t \in \R : K(t) > 0 \}.
\end{equation}
For each $t \in \R$, the series in~\eqref{eq:kummer} converges absolutely, and, indeed, 
$| 1 - K(t) | \leq \re^{|t|}$ for all $t \in \R$.
The series for $K$ defined by~\eqref{eq:kummer} 
compared to equation~(13.1.2) in~\cite{as} identifies $K$ as the Kummer (confluent hypergeometric) function $K(t) = M(-\frac{1}{2}, \frac{1}{2} , t )$;
see Appendix~\ref{sec:kummer} for some of its properties.

The following result gives asymptotics
for $\theta (N_M, M) = \theta_{(1,M)} (N_M, M)$ as defined at~\eqref{eq:theta-def},
and for the generating function~$\Mgf$ as defined at~\eqref{eq:phi-def}.

\begin{proposition}
\label{prop:excursion-characteristics}
Consider the $(N_M,M,z_M)$ model with $M \in \N$ and $N_M \in \barN$ such that
$\lim_{M \to \infty} (M / N_M^2 ) = 0$.
Then 
\begin{equation}
\label{eq:theta-asymptotics}
 \theta (N_M, M ) = \sqrt{\frac{2}{\pi}} (1+o(1)) M^{-1/2}  , \text{ as } M \to \infty . \end{equation}
Moreover,
\begin{equation}
\label{eq:fast-return-mgf}
 \lim_{M \to \infty} \left[ \left( \Mgf (t / M) -1 \right) {M^{1/2}} \right] = \sqrt{ \frac{2}{\pi} } (1 - K(t))  ,\end{equation}
uniformly over $t \in R$, for any compact $R \subset \R$.
\end{proposition}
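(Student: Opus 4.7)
My plan is to first identify the first-excursion duration $\nu$ under $\PrfMo$ with a simple random walk two-sided exit time, and then exploit the tail asymptotic in Lemma~\ref{lem:one-sided-approx}. Starting from $(1,M)$, the walker traces simple random walk $X_0 = 1, X_1, \ldots$ with energy decreasing by one per step; it dies at time $M$ if it has not yet reached $\bI_{N_M}$. Hence on $\{\tau_{0,N_M} \le M\}$ we have $\nu = \tau_{0,N_M}$ (with energy $M - \tau_{0,N_M}$ remaining at the boundary), while $\nu = \infty$ otherwise. This gives the key identities
\[ \theta(N_M, M) = \bPo(\tau_{0,N_M} > M) \quad \text{and} \quad \Mgf(s) = \bEo\bigl[\re^{s \tau_{0,N_M}} \bigmid \tau_{0,N_M} \le M\bigr] .\]
Part~(i) follows at once: since $M/N_M^2 \to 0$ forces $M < \delta_\eps N_M^2$ for all large $M$, Lemma~\ref{lem:one-sided-approx}\ref{lem:one-sided-approx-iii} with $n = M$ yields~\eqref{eq:theta-asymptotics}.

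For part~(ii), set $\tau := \tau_{0,N_M}$; since $\bPo(\tau \le M) \to 1$ by part~(i), expanding the exponential in a power series gives
\[ \Mgf(t/M) - 1 = (1+o(1)) \sum_{\ell \ge 1} \frac{t^\ell}{\ell! \, M^\ell} \bEo[\tau^\ell \1{\tau \le M}] .\]
The layer-cake identity $\bEo[\tau^\ell \1{\tau \le M}] = \int_0^M \ell s^{\ell-1} \bPo(\tau > s) \ud s - M^\ell \bPo(\tau > M)$ combined with the tail asymptotic $\bPo(\tau > s) \sim \sqrt{2/\pi}\, s^{-1/2}$ (Lemma~\ref{lem:one-sided-approx}\ref{lem:one-sided-approx-iii}, uniform on $s \in [n_\eps, M]$) and the elementary computation $\int_0^M \ell s^{\ell - 3/2} \ud s - M^{\ell-1/2} = M^{\ell - 1/2}/(2\ell - 1)$ show that, for each fixed $\ell \ge 1$,
\[ M^{-(\ell - 1/2)} \bEo[\tau^\ell \1{\tau \le M}] \longrightarrow \sqrt{2/\pi}\,/\,(2\ell - 1) , \text{ as } M \to \infty .\]

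The main obstacle is then to pass this term-wise limit through the infinite sum uniformly in $t \in R$ for compact $R \subset \R$. I would establish the $N$-uniform bound $\bPo(\tau_{0,N} > s) \le \bPo(\tau_0 > s) \le C s^{-1/2}$ for all $s \ge 1$, obtained from $\tau_{0,N} \le \tau_0$ together with the classical estimate~\eqref{eq:tau_0-tail} (which is uniform in~$s$, not merely asymptotic). This implies $M^{-(\ell-1/2)} \bEo[\tau^\ell \1{\tau \le M}] \le C'$ for all $\ell \ge 1$ and $M$ large, so that each term in $M^{1/2}(\Mgf(t/M) - 1)$ is dominated by $C' |t|^\ell/\ell!$, a series summable uniformly over $t$ on compact sets. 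Dominated convergence then delivers
\[ M^{1/2}(\Mgf(t/M) - 1) \to \sqrt{2/\pi} \sum_{\ell \ge 1} \frac{t^\ell}{(2\ell-1) \, \ell!} = \sqrt{2/\pi}\,(1 - K(t)) ,\]
by the definition~\eqref{eq:kummer} of $K$, which is the required convergence~\eqref{eq:fast-return-mgf}.
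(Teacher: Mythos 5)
Your proof is correct. Part~(i), and the identification of $\theta(N_M,M)$ and $\Mgf$ with the tail and conditional Laplace transform of $\tau_{0,N_M}$, coincide with the paper's argument (the paper truncates at $M+1$ where you truncate at $M$; the discrepancy is a single probability $\bPo(\tau_{0,N_M}=M+1)=O(M^{-3/2})$ and is immaterial at scale $M^{-1/2}$). For part~(ii) you take a genuinely different route. The paper applies the integration-by-parts formula~\eqref{eq:mgf-split-by-parts} once, with $g(y)=\re^{ty}-1$, to write $\bEo[(\re^{t\tau/M}-1)\1{\tau\le M+1}]$ as $(1-\re^t)\bPo(\tau>M)+t\int_0^1\re^{tu}\bPo(\tau>uM)\,\ud u$, feeds in the uniform tail estimate of Lemma~\ref{lem:one-sided-approx}\ref{lem:one-sided-approx-iii} to produce $\sqrt{2/\pi}\,M^{-1/2}\bigl(1-\re^t+\cI(t)\bigr)$, and then invokes the Kummer identity $K(t)=\re^t-\cI(t)$ of Lemma~\ref{lem:kummer}\ref{lem:kummer-d}. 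You instead expand the exponential, compute each truncated moment $\bEo[\tau^\ell\1{\tau\le M}]\sim\sqrt{2/\pi}\,M^{\ell-1/2}/(2\ell-1)$ by the same integration by parts applied to $g(y)=y^\ell$, and resum, recovering the defining series $\sum_{\ell\ge1}t^\ell/((2\ell-1)\,\ell!)=1-K(t)$ of~\eqref{eq:kummer} directly. What this buys is that the identity~\eqref{eq:K-E} (and the special-function manipulations behind it) is not needed, and the moment asymptotics of the conditioned excursion come out as a by-product; what it costs is the interchange of the limit $M\to\infty$ with the infinite sum, which you handle correctly via the $N$-uniform domination $\bPo(\tau_{0,N}>s)\le\bPo(\tau_0>s)\le Cs^{-1/2}$ and hence $\bEo[\tau^\ell\1{\tau\le M}]\le C'M^{\ell-1/2}$ uniformly in $\ell$ and $N$; this also gives the claimed uniformity in $t$ on compacta. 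Two minor points to write out in a polished version: the passage from integer $n$ in Lemma~\ref{lem:one-sided-approx}\ref{lem:one-sided-approx-iii} to real $s$ in the integrals (harmless, and equally implicit in the paper), and the observation that the $O(\eps)$ relative errors in $\int_0^M\ell s^{\ell-1}\bPo(\tau>s)\,\ud s$ and $M^\ell\bPo(\tau>M)$ do not cancel in the difference but contribute an absolute error $O(\eps)M^{\ell-1/2}$ with constant uniform in $\ell$, which still vanishes upon letting $M\to\infty$ and then $\eps\downarrow0$.
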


In the proof of this result, and later, we will use the following integration by parts formula for restricted expectations,
which is a slight generalization of Theorem~2.12.3 of~\cite[p.~76]{gut}.

\begin{lemma}
For any real-valued random variable $X$ and every $a, b \in \R$ with $a \leq b$, and any monotone and differentiable $g: \R \to \R$,
\begin{align}
\label{eq:mgf-split-by-parts}
 \Exp [ g (X) \1 { a < X \leq b} ] & =  g(a) \Pr (X > a ) - g(b) \Pr (X > b) + \int_a^b g' (y) \Pr (X > y) \ud y . \end{align}
\end{lemma}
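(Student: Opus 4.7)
The plan is to start from the right-hand side and use Fubini's theorem to convert the integral of a tail probability into an expectation of an integral, then do a case analysis on the position of $X$. Specifically, I would write
\[
\int_a^b g'(y)\,\Pr(X > y)\,\ud y \;=\; \Exp\!\left[ \int_a^b g'(y) \1{X > y}\,\ud y \right],
\]
where Fubini applies because $g$ monotone and differentiable on $[a,b]$ forces $g'$ to have constant sign and to be integrable on $[a,b]$ (its integral is the finite difference $g(b)-g(a)$), while $\Pr(X>y)$ is bounded.

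Next, I would evaluate the inner integral pathwise. There are three cases: if $X \leq a$, the integrand is zero; if $a < X \leq b$, the indicator cuts the integral off at $X$, giving $g(X) - g(a)$; and if $X > b$, the indicator is identically~$1$, giving $g(b) - g(a)$. Thus
\[
\int_a^b g'(y) \1{X > y}\,\ud y = (g(X)-g(a))\1{a < X \leq b} + (g(b)-g(a))\1{X > b},
\]
and taking expectations yields
\[
\int_a^b g'(y)\,\Pr(X > y)\,\ud y = \Exp[ g(X) \1{a < X \leq b} ] - g(a)\Pr(a < X \leq b) + (g(b)-g(a))\Pr(X > b).
\]

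Finally, I would rearrange, using $\Pr(a < X \leq b) = \Pr(X>a)-\Pr(X>b)$ to combine the $g(a)$ and $g(b)$ terms, and the claimed identity~\eqref{eq:mgf-split-by-parts} falls out.

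The only potential obstacle is the Fubini step, but it is benign: since $g$ is monotone and differentiable on (an open neighbourhood of) $[a,b]$, its derivative~$g'$ is of one sign, and $\int_a^b |g'(y)| \Pr(X > y)\,\ud y \leq |g(b)-g(a)| < \infty$, so the double integral is absolutely convergent. No further regularity of the law of~$X$ is required; in particular the formula holds whether or not~$X$ is continuous.
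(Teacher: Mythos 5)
Your proof is correct, but it takes a different route from the paper's. The paper treats $\Exp[g(X)\1{a<X\leq b}]$ as a Riemann--Stieltjes integral $\int_a^b g(y)\,\ud F_X(y)$ and invokes the Stieltjes integration-by-parts formula (citing Gut's textbook), then substitutes $F_X(y)=1-\Pr(X>y)$ to land on \eqref{eq:mgf-split-by-parts}; monotonicity of $g$ is used there to make $\ud g$ a (signed) Stieltjes integrator. You instead start from the right-hand side, write $\int_a^b g'(y)\Pr(X>y)\,\ud y=\Exp[\int_a^b g'(y)\1{X>y}\,\ud y]$ by Fubini--Tonelli (legitimate since $g'$ has constant sign and $\int_a^b|g'|=|g(b)-g(a)|<\infty$), and evaluate the inner integral pathwise by the three cases $X\leq a$, $a<X\leq b$, $X>b$; the rearrangement you describe does recover the identity. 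Your argument is more self-contained in that it avoids quoting the Stieltjes integration-by-parts theorem, at the cost of one mild technical point you pass over quickly: the pathwise evaluation $\int_a^X g'(y)\,\ud y=g(X)-g(a)$ is the fundamental theorem of calculus for $g$, which does hold for a monotone, everywhere-differentiable $g$ (its derivative is then Lebesgue integrable and the Newton--Leibniz formula is valid), and is trivially true for the smooth $g(y)=\re^{ty}-1$ used in the paper's applications. Both proofs require nothing of the law of $X$, so the generality is the same.
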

\begin{proof}
Suppose that $g: \R \to \R$ is differentiable.
Write $F_X (y):=\Pr ( X \leq y)$ for the distribution function of $X$. If $g$ is monotone non-decreasing, then
\begin{align*}
 \Exp [ g (X) \1 { a < X \leq b} ] = \int_a^b g (y) \ud F_X (y)  = g(b) F_X(b) - g(a) F_X (a) - \int_a^b F_X (y ) g' (y) \ud y, \end{align*}
where the first equality is e.g.~Theorem~2.7.1 of~\cite[p.~60]{gut} and the second follows
from Theorem~2.9.3 of~\cite[p.~66]{gut}; this yields~\eqref{eq:mgf-split-by-parts}.
If $g$ is monotone non-increasing, then the same result follows by considering $\tilde g (y) := g(b) - g(y)$.
\end{proof}

\begin{proof}[Proof of Proposition~\ref{prop:excursion-characteristics}.]
For ease of notation, write $\tau := \tau_{0,N_M}$ throughout this proof. 
Apply Lemma~\ref{lem:one-sided-approx} with $n = M+1$ to get
\[ \theta (N_M, M ) = \bPo ( \tau > M+1 ) = \sqrt{\frac{2}{\pi}}  M^{-1/2} + O (1/N_M) , \text{ as } M \to \infty . \]
Since $M = o( N_M^2 )$, the $O(1/N_M)$ term here can be neglected
asymptotically; this verifies~\eqref{eq:theta-asymptotics}. 

We apply~\eqref{eq:mgf-split-by-parts} with $X = \tau/M$, $a=0$, $b=1$, and $g (y) = \re^{ty} -1$, ($t \neq 0$) to get
\begin{align}
\label{eq:mgf-by-parts-meagre}
 & \bEo [ (\re^{t \tau / M} - 1) \1 { \tau \leq M } ] \nonumber\\
& {} \qquad {} = (1 - \re^{t}) \bPo ( \tau  > M ) + t \int_0^1 \re^{tu} \bPo ( \tau  > u M ) \ud u . \end{align}
Fix $\eps >0$, and let $n_\eps \in \N$ and $\delta_\eps > 0$ be as in Lemma~\ref{lem:one-sided-approx}\ref{lem:one-sided-approx-iii}.
Since $M / N_M^2 \to 0$, we have $M < \delta_\eps N_M^2$ for all $M$ sufficiently large. Hence from~\eqref{eq:fast-return-tail} we have that
\begin{equation}
\label{eq:uniform-bound-meagre}
 \sup_{n_\eps/M \leq u \leq 1} \left| M^{1/2} u^{1/2} \bPo ( \tau  > u M ) - \sqrt{2/\pi} \right| \leq \eps ,\end{equation}
for all $M$ sufficiently large. It follows from~\eqref{eq:uniform-bound-meagre} that, as $M \to \infty$, 
\begin{align}
\label{eq:mgf-kummer-lower}
 t \int_0^1 \re^{tu} \bPo ( \tau  > u M ) \ud u
& \geq \bigl( \sqrt{2/\pi}  - \eps \bigr) M^{-1/2} t \int_{n_\eps/M}^1 \re^{tu} u^{-1/2} \ud u \nonumber\\
&  =  \bigl( \sqrt{2/\pi}  - \eps \bigr) M^{-1/2} ( \cI( t) + o(1) ) ,\end{align}
using the definition of $\cI$ from~\eqref{eq:E-def};
here the $o(1)$ is uniform for $t \in R$ for a given compact $R \subset \R$.
 Similarly, from~\eqref{eq:uniform-bound-meagre} again, uniformly for $t \in R$ (compact),
\begin{align}
\label{eq:mgf-kummer-upper}
 t \int_0^1 \re^{tu} \bPo ( \tau  > u M ) \ud u
& \leq \bigl( \sqrt{2/\pi}  + \eps \bigr) M^{-1/2} ( \cI( t) + o(1) ). \end{align}
Combining~\eqref{eq:mgf-kummer-lower} and~\eqref{eq:mgf-kummer-upper}, since $\eps>0$ was arbitrary, we obtain
\[ t \int_0^1 \re^{tu} \bPo ( \tau  > u M ) \ud u = \bigl( \sqrt{2/\pi} +o(1) \bigr) M^{-1/2} \cI( t) ,\]
as $M \to \infty$,
uniformly for $t \in R$, $R$ compact. 
 Together with the asymptotics for $\bPo ( \tau  > M+1 ) = \theta (N_M, M )$ from~\eqref{eq:theta-asymptotics},
we thus conclude from~\eqref{eq:mgf-by-parts-meagre} that
\[ \bEo [ (\re^{t \tau  / M} - 1) \1 { \tau  \leq M+1 } ] = \bigl( \sqrt{2/\pi} +o(1) \bigr) M^{-1/2} \left( 1 - \re^t +  \cI( t) \right) , \]
uniformly for $t \in R$, $R$ compact, from which~\eqref{eq:fast-return-mgf} follows, since
$1 - \re^t +  \cI( t) = 1-K(t)$ by~\eqref{eq:K-E}, and, from~\eqref{eq:phi-def},
\[ \Mgf (t / M)   = 1 + \ExpfMo [ \re^{t \nu  / M} - 1 \mid \nu < \infty ] = \frac{\bEo [ (\re^{t \tau  / M} - 1) \1 { \tau  \leq M+1 } ]}{\bPo ( \tau \leq M+1) } ,\]
where $\bPo ( \tau \leq M+1) = 1 - \theta (N_M, M ) \to 1$, by~\eqref{eq:theta-asymptotics}.
\end{proof}

To avoid burdening notation with conditioning, 
let $Y_M$ denote a random variable taking values in $I_{M+1}$ (enriching the underlying probability space if necessary)
such that
\begin{equation}
\label{eq:Y-def}
 \PrfMo ( Y_M = n )  = \PrfMo ( \nu = n \mid \nu < \infty ), \text{ for } n \in I_{M+1} .\end{equation}
Given $Y_n$ as constructed through~\eqref{eq:Y-def}, it is the case that $\Mgf$ as defined at~\eqref{eq:phi-def} 
can be represented via 
$\Mgf (s) = \ExpfMo [ \re^{s Y_M} ]$, $s \in \R$.

The next lemma proves that the limit distribution in Theorem~\ref{thm:meagre-capacity}
has expectation given by the function~$g$ defined at~\eqref{eq:g-def}, and
establishes some of the key properties of~$g$.

\begin{lemma}
\label{lem:g-calculus}
Suppose that $\xi \sim \DM$ and $\tbmo \sim \Stable$ are independent with distributions given by~\eqref{eq:xi-transform} and~\eqref{eq:tbm} respectively. 
Let $a \in \RP$ and $u \in (0,\infty)$. The random variable $\zeta_{a,u} := \min ( u, a \tbmo ) + (1+ \xi) \1 { a \tbmo < u}$ has mean given by
$\Exp \zeta_{a,u} = g(a,u)$, where $g$ is defined at~\eqref{eq:g-def}. Moreover, for every $u\in (0,1]$,
the function~$a \mapsto g(a,u)$
 is strictly decreasing and infinitely differentiable on $(0,\infty)$, with~$g(0,u)=2$ and $\lim_{a \to \infty} g(a,u)=u$.
\end{lemma}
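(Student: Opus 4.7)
The plan is to compute $\Exp\zeta_{a,u}$ directly via independence and the known distribution of $\tbmo$, then to establish the analytic properties of $g$ by explicit differentiation and a Mills-ratio asymptotic expansion.

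First, exploiting independence of $\xi$ and $\tbmo$ and using $\Exp\xi=1$ (noted after~\eqref{eq:xi-transform}), I would split
\[
\Exp\zeta_{a,u}=\Exp[\min(u,a\tbmo)]+(1+\Exp\xi)\Pr(a\tbmo<u)=\Exp[\min(u,a\tbmo)]+2\Pr(a\tbmo<u).
\]
The reflection identity~\eqref{eq:reflection} gives $\Pr(a\tbmo<u)=2\bPhi(\sqrt{a/u})$, so it remains to evaluate $\Exp[\min(u,a\tbmo)]=u\bigl(1-2\bPhi(\sqrt{a/u})\bigr)+a\,\Exp[\tbmo\1{\tbmo\leq u/a}]$. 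Using the distributional identity $\tbmo\eqd 1/Z^2$ with $Z\sim N(0,1)$ (equivalent to~\eqref{eq:tbm}), a change of variables yields
\[
a\,\Exp\bigl[\tbmo\1{\tbmo\leq u/a}\bigr]=\frac{2a}{\sqrt{2\pi}}\int_{\sqrt{a/u}}^{\infty}\frac{\re^{-z^2/2}}{z^2}\,\ud z,
\]
and integration by parts reduces the integral to $\sqrt{u/a}\,\re^{-a/(2u)}-\sqrt{2\pi}\,\bPhi(\sqrt{a/u})$. Assembling the three pieces produces~\eqref{eq:g-def}.

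Second, for monotonicity, differentiating $g(\cdot,u)$ termwise in $a$ and using $\bPhi'(x)=-\phi(x)$ together with $(\ud/\ud a)\sqrt{a/u}=1/(2\sqrt{au})$, the Gaussian contribution from the $\bPhi$-term and the one from $\sqrt{2au/\pi}\,\re^{-a/(2u)}$ combine through the identity $\sqrt{u/a}-\sqrt{a/u}=(u-a)/\sqrt{au}$ to give the clean expression
\[
\frac{\partial g}{\partial a}(a,u)=-2\bPhi(\sqrt{a/u})-\frac{2(1-u)}{\sqrt{2\pi au}}\,\re^{-a/(2u)}.
\]
For $u\in(0,1]$ both terms are non-positive, with the first strictly negative, yielding strict decrease on $(0,\infty)$. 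Infinite differentiability is immediate from the formula, since each factor in~\eqref{eq:g-def} is $C^{\infty}$ on $(0,\infty)$.

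Finally, for the endpoint values, at $a=0$ one has $\bPhi(0)=1/2$, giving $g(0,u)=u+(4-2u)/2=2$, which matches the direct calculation $\Exp\zeta_{0,u}=\Exp[1+\xi]=2$. As $a\to\infty$, the Mills-ratio expansion $\bPhi(x)=x^{-1}\phi(x)(1+O(x^{-2}))$ evaluated at $x=\sqrt{a/u}$ gives $(4-2u-2a)\bPhi(\sqrt{a/u})=-\sqrt{2au/\pi}\,\re^{-a/(2u)}+O(a^{-1/2}\re^{-a/(2u)})$, so the two exponentially small contributions in~\eqref{eq:g-def} cancel to leading order and $g(a,u)\to u$. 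The main obstacle is the bookkeeping in the derivative computation: the cancellation between the $\bPhi$- and $\re^{-a/(2u)}$-contributions is delicate, and careful tracking is needed to collapse them into a single term whose sign is transparent.
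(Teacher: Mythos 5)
Your proposal is correct and follows essentially the same route as the paper: decompose $\Exp\zeta_{a,u}$ via independence and $\Exp\xi=1$, evaluate $\Exp[\min(u,a\tbmo)]$ by Gaussian substitution in the $(1/2)$-stable density (your $\tbmo\eqd 1/Z^2$ device is equivalent to the paper's change of variables $s^2=1/t$), and differentiate to reach the same expression $g'(a,u)=-2\bPhi(\sqrt{a/u})-\tfrac{2(1-u)}{\sqrt{au}}\phi(\sqrt{a/u})$. Your explicit Mills-ratio verification of $\lim_{a\to\infty}g(a,u)=u$ is a small welcome addition, as the paper asserts that limit without spelling out the cancellation.
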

\begin{proof}
If $a=0 <u$, then $\zeta_0 = 1 +\xi$, a.s., and the result is true because $g(0,u) = 2 = 1 +\Exp \xi$.
Suppose that $a, u \in (0,\infty)$.
From~\eqref{eq:tbm}, we have that
\begin{align*}
\Exp [ u \wedge a \tbmo ]
& = \int_0^{u/a} \frac{a}{\sqrt{2\pi t}} \re^{-1/(2t)} \ud t + u \int_{u/a}^\infty \frac{1}{\sqrt{2\pi t^3}} \re^{-1/(2t)} \ud t \\
& = \sqrt{\frac{au}{2\pi}} \int_{1}^\infty  y^{-3/2}  \re^{-ay/(2u)} \ud y + \frac{2u}{\sqrt{2\pi}} \int_0^{\sqrt{a/u}} \re^{-s^2 /2} \ud s , \end{align*}
using the substitutions $y = u/(at)$ in the first integral and $s^2 = 1/t$ in the second.
Here
\[  \frac{2}{\sqrt{2\pi}} \int_0^{\sqrt{a/u}} \re^{-s^2 /2} \ud s =  2 \bigl[ \Phi ( \sqrt{a/u} ) - \Phi (0) \bigr] = 2 \Phi ( \sqrt{a/u} )  - 1 .\]
Moreover,  an integration by parts followed by the substitution $s^2 = ry$ gives
\begin{align*}
 \int_{1}^\infty  y^{-3/2}  \re^{-r y/2} \ud y & = 2 \re^{-r/2} - r \int_1^\infty y^{-1/2} \re^{-ry/2} \ud y \\
& = 2 \re^{-r/2} - 2 \sqrt{r} \int_{\sqrt{r}}^\infty \re^{-s^2/2} \ud s , \text{ for } r > 0 . \end{align*}
Hence
\[ \Exp [ u \wedge a \tbmo ] = u + \sqrt{\frac{2au}{\pi}} \re^{-a/(2u)} - 2 (a+u) \bPhi ( \sqrt{a/u} ) .\]
Finally, by~\eqref{eq:reflection}, $\Pr (  a \tbmo < u ) = \Pr ( \tbmo < u/a) = 2 \bPhi (\sqrt{a/u} )$,
and then using the independence of $\xi$ and $\tbmo$, and the fact that $\Exp \xi = 1$, gives
\[ \Exp \zeta_{a,u} =u + \sqrt{\frac{2au}{\pi}} \re^{-a/(2u)} - 2 (a+u) \bPhi ( \sqrt{a/u} ) + 4  \bPhi ( \sqrt{a/u} ), \]
which is equal to~$g(a,u)$ as defined at~\eqref{eq:g-def}.

Write $\phi (z) := (2\pi)^{-1/2} \re^{-z^2/2}$ for the standard normal density, so that $\bPhi' ( z) = - \phi(z)$
and $\phi'(z) = - z \phi(z)$. The formula~\eqref{eq:g-def}
can then be expressed as 
\[ g(a) = u + (4-2u-2a) \bPhi (\sqrt{a/u} ) + 2 \sqrt{au} \phi (\sqrt{a/u}), \]
which,
on differentiation, yields, for $u >0$,
\begin{align} 
\label{eq:g-derivative}
g' (a,u) & := \frac{\partial}{\partial a} g (a,u) 
= -\frac{2(1-u)}{\sqrt{au}} \phi (\sqrt{au} ) - 2 \bPhi (\sqrt{a/u} ) .\end{align} 
Thus 
$g'(a,u) < 0$ for all $a \in (0,\infty)$ and all $u \in (0,1]$,
with $\lim_{a \to 0} g'(a,u) = -\infty$ for $u \in (0,1)$ and
$\lim_{a \to 0} g'(a,1) = -1$. In particular, $a \mapsto g(a,u)$ is strictly decreasing.
\end{proof}

\begin{proof}[Proof of Theorem~\ref{thm:meagre-capacity}.]
To simplify notation, set 
$\theta_M := \theta (N_M, M)$.
First suppose that $X_0 = 1$ and $\eta_0 =M$.
Then in the representation $\sigma_\kappa = \sum_{i=1}^\kappa \nu_i$, 
Lemma~\ref{lem:excursion-law} shows that, given $\kappa = k \in \N$, $\nu_1, \ldots, \nu_k$ are i.i.d.~with the law of $Y_M$ as given by~\eqref{eq:Y-def},
and the law of $\kappa$ is $\PrfMo ( \kappa = k )  = (1-\theta_M)^k \theta_M$, for $k \in \ZP$.
In particular, for $| r (1-\theta_M ) | < 1$,
\[ \ExpfMo [ r^{\kappa} ] =\sum_{k=0}^\infty r^k (1-\theta_M)^k \theta_M = \frac{\theta_M}{1-(1-\theta_M)r} .\]
 Hence, by (conditional) independence,
\begin{align}
\label{eq:Xi-gf-s}
 \ExpfMo \left[ {\re^{s \sigma_\kappa}} \right] 
& = \ExpfMo \left[ \Exp \biggl[ \prod_{i=1}^{\kappa} \re^{s \nu_i} \biggmid \kappa \biggr] \right] \nonumber\\
& = \ExpfMo \left[ \left( \Mgf (s) \right)^{\kappa} \right] \nonumber\\
& = \frac{\theta_M}{1-(1-\theta_M) \Mgf (s)} , \text{ if } (1 -\theta_M) \Mgf (s) < 1. 
\end{align}
For $s=t/M$, we have from Proposition~\ref{prop:excursion-characteristics} that, for $t \in \R$,
\begin{equation}
\label{eq:rate-bound} 
( 1 - \theta_M ) \Mgf ( t / M ) = 1 - M^{-1/2} \sqrt{\frac{2}{\pi}}  K(t) + o (M^{-1/2} ) ,\end{equation}
as $M \to \infty$. In particular, for $t \in \cT$, with $\cT$ defined at~\eqref{eq:kummer}, the asymptotics in~\eqref{eq:rate-bound} show that $( 1 - \theta_M ) \Mgf ( t / M ) < 1$
for all $M$ sufficiently large. Hence from~\eqref{eq:Xi-gf-s} applied at  $s = t/M$, $t \in \cT$, 
with~\eqref{eq:rate-bound}  and another application of~\eqref{eq:theta-asymptotics}, we get
\begin{align}
\label{eq:Xi-gf-t}
\lim_{M \to \infty} \ExpfMo \left[ {\re^{t \sigma_\kappa /M}} \right] 
& = \lim_{M \to \infty} \left[ M^{1/2} \sqrt{\frac{\pi}{2}}  \frac{\theta_M}{K(t) +o(1)} \right]
= \frac{1}{K(t)}, \text{ for } t \in \cT .  
\end{align}
Since $\lambda = M +1 + \sigma_\kappa$, by~\eqref{eq:lambda-sigma}, it follows from~\eqref{eq:Xi-gf-t}, 
and the fact that convergence of moment generating functions in a neighbourhood of $0$ implies convergence in distribution, and convergence of all moments (see e.g.~\cite[p.~242]{gut}), 
that
$\lambda /M \tod 1+\xi$ where $\Exp [ \re^{t \xi} ] = 1 / K(t)$ for $t \in \cT$, and $M^{-1} \ExpfMo [ \sigma_\kappa ] \to \Exp \xi = 1$.
The form for $\Exp [ \re^{t \xi}]$ given in~\eqref{eq:xi-transform} is obtained using the relation~\eqref{eq:K-E}. This establishes both~\eqref{eq:lifetime-meagre-limit}
and~\eqref{eq:expectation-limit} 
in the case $X_0 = x_M \equiv 1$ and $\eta_0 = y_M \equiv M$, where~\eqref{eq:meagre-capacity} is satisfied for $a=0$ and $u=1$.

More generally, suppose that $(X_0,\eta_0) = ( x_M,y_M)$ satisfying~\eqref{eq:meagre-capacity}. Then 
\begin{equation}
\label{eq:lambda-general-start}
 \lambda \eqd M + 1 + (\nu_1 + Z ) \1{ \nu_1 < \infty } ,\end{equation}
by the strong Markov property,
where the $Z , \nu_1$ on the right are independent, and $Z$ has the distribution of $\sigma_\kappa$ under $\PrfMo$.
Here $\sigma_\kappa / M \tod \xi$ and, by Lemma~\ref{lem:first-excursion},
\[ \lim_{M \to \infty} \PrfM ( \nu_1 < \infty ) = \lim_{M \to \infty} M^{-1} \bP_{x_M} ( \tau_{0,N_M} \leq y_M + 1 ) = \Pr ( a \tbmo \leq u ) , \]
and $M^{-1} \nu_1 \1{ \nu_1 < \infty }$ converges in distribution to $a \tbmo \1 { a \tbmo \leq u }$. This completes the proof of~\eqref{eq:lifetime-meagre-limit}.
Taking expectations in~\eqref{eq:lambda-general-start}, and using the stated independence,
we get
\begin{align*}
\ExpfM \lambda & = M + 1 + \ExpfM [ \nu  \1{ \nu < \infty } ] +\PrfM ( \nu < \infty)  \ExpfMo [ \sigma_\kappa ]  , \end{align*}
where $\PrfM ( \nu < \infty) = \theta_{z_M} (N_M, M) = \Pr ( a \tbmo \leq u) + o(1)$, by the $\beta =0$ case of~\eqref{eq:limit-moments-truncated}
and hypothesis~\eqref{eq:meagre-capacity}. It follows that 
\begin{align*}
M^{-1} \ExpfM \lambda & = 1 +  M^{-1} \ExpfM [ \nu  \1{ \nu < \infty } ] +  \Pr ( a \tbmo \leq u ) + o(1),\end{align*}
using the fact that $\lim_{M \to \infty} M^{-1} \ExpfMo [ \sigma_\kappa ] = 1$, as established via~\eqref{eq:Xi-gf-t} above. 
Moreover, using~\eqref{eq:meagre-capacity}, we have 
from Lemma~\ref{lem:first-excursion} that
\begin{align*} \lim_{M \to \infty} M^{-1} \ExpfM [ \nu  \1{ \nu  < \infty} ] & = \lim_{M \to \infty} M^{-1} \bE_{x_M} [ \tau_{0,N_M} \1 { \tau_{0,N_M} \leq y_M+1 } ] \\
& = \Exp [ a \tbmo \1 { a \tbmo \leq u} ].\end{align*}
Thus we conclude 
that, as $M \to \infty$, 
\[ \lim_{M \to \infty} M^{-1} \ExpfM \lambda = 1  + a \Exp [ \tbmo \1 { a \tbmo \leq u} ] + \Pr ( a \tbmo \leq u )  = \Exp \zeta_{a,u}, \]
where $\zeta_{a,u}$ is as defined in Lemma~\ref{lem:g-calculus}. This establishes~\eqref{eq:expectation-limit}, and completes the proof of the theorem.
\end{proof}

\subsection{The confined-space limit}
\label{sec:proofs:large-energy}

In this section we present the proof of Theorem~\ref{thm:confined-space}. 
As in the previous section, we start with an asymptotic
estimate on $\theta (N_M, M) := \theta_{(1,M)} (N_M, M)$ as defined at~\eqref{eq:theta-def}.

\begin{proposition}
\label{prop:confined-space-excursions}
Suppose that~\eqref{eq:confined-space} holds.
Then, as $M \to \infty$, 
\begin{equation}
\label{eq:feller-tail-supercritical-theta} \theta ( N_M, M) = \frac{4}{N_M} (1 + o(1)) \cos^M \left( \frac{\pi}{N_M} \right)  . \end{equation}
Moreover, as $M \to \infty$,
\begin{equation}
\label{eq:weak-conditioning-moments}
\ExpfMo [ \nu \mid \nu  < \infty ] \sim N_M, \text{ and }
\VarfMo [ \nu \mid \nu  < \infty ] \sim \frac{N_M^3}{3} .
\end{equation}
\end{proposition}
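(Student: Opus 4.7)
The plan is to reduce all three claims to asymptotic facts about the two-sided hitting time $\tau_{0,N_M}$ of the underlying simple symmetric random walk, for which we invoke Corollary~\ref{cor:feller-tail}\ref{cor:feller-tail-i} (whose hypothesis $n/N^2 \to \infty$ holds with $n = M$ by the confined-space condition~\eqref{eq:confined-space}) together with the moment formulae in Lemma~\ref{lem:mean-tau}.

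Under $\PrfMo$, starting from $z=(1,M)$, the chain performs interior simple-random-walk steps, consuming one unit of energy per step, until it either reaches $\bI_{N_M}$ (in which case $\sigma_1 = \tau_{0,N_M}$ and the excursion is complete with $\nu_1 = \tau_{0,N_M}$) or exhausts its energy budget (in which case extinction occurs and $\nu_1 = \infty$). Hence $\theta(N_M, M) = \bPo( \tau_{0,N_M} > M )$, and the conditional excursion law is
\[
\PrfMo(\nu = n \mid \nu < \infty) = \bPo(\tau_{0,N_M} = n \mid \tau_{0,N_M} \leq M),
\]
up to a one-step convention in the energy threshold that is inessential to the asymptotics (cf.\ the use of $n = M+1$ in the proof of Proposition~\ref{prop:excursion-characteristics}; any such off-by-one is absorbed in the $1+o(1)$ factors below, since $\cos(\pi/N_M) = 1 + O(N_M^{-2})$). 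For~\eqref{eq:feller-tail-supercritical-theta}, I would apply~\eqref{eq:feller-tail-supercritical} directly with $n = M$ and $N = N_M$; the hypothesis $M/N_M^2 \to \infty$ gives $\theta(N_M,M) = (4/N_M)(1+o(1))\cos^M(\pi/N_M)$ at once.

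For the conditional moment asymptotics~\eqref{eq:weak-conditioning-moments}, I would combine Lemma~\ref{lem:mean-tau}, which yields the unconditional quantities $\bEo \tau_{0,N_M} = N_M - 1$ and $\bVar_1 \tau_{0,N_M} = N_M(N_M-1)(N_M-2)/3$, with the conditional-moment convergence~\eqref{eq:large-n-moments}, applied separately for $\beta = 1$ and $\beta = 2$. This gives
\[
\ExpfMo[\nu \mid \nu < \infty] \sim \bEo \tau_{0,N_M} \sim N_M,
\]
together with $\bEo[\tau_{0,N_M}^2 \mid \tau_{0,N_M} \leq M] \sim \bEo \tau_{0,N_M}^2 = \bVar_1 \tau_{0,N_M} + (\bEo \tau_{0,N_M})^2 \sim N_M^3/3$, where the variance term dominates the squared mean. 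Subtracting the square of a first moment of order $N_M^2 = o(N_M^3)$ then yields $\VarfMo[\nu \mid \nu < \infty] \sim N_M^3/3$.

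The one step requiring care is that~\eqref{eq:large-n-moments} is stated as a limit in $n$ with $N$ held fixed, whereas here $N = N_M \to \infty$ and $n = M \to \infty$ jointly with $M/N_M^2 \to \infty$. Inspection of its proof, however, shows that the underlying bound~\eqref{eq:big-tau-bound} depends only on the ratio $n/N^2 \to \infty$, while the matching lower bound $\bEo \tau^\beta_{0,N} \gtrsim N^{2\beta-1}$ derives from~\eqref{eq:fast-return-tail} (also uniform in $N$). Both survive the joint limit, so~\eqref{eq:large-n-moments} extends to the regime we need; explicitly noting this validity is the only additional work required beyond reading off the above consequences.
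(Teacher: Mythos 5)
Your proposal is correct and follows essentially the same route as the paper: identify $\theta(N_M,M)$ with the tail $\bPo(\tau_{0,N_M}>M+1)$ and apply Corollary~\ref{cor:feller-tail}\ref{cor:feller-tail-i}, then obtain the conditional moments from~\eqref{eq:large-n-moments} with $\beta=1,2$ combined with Lemma~\ref{lem:mean-tau}. Your additional remarks (the off-by-one in the energy threshold, and the validity of~\eqref{eq:large-n-moments} in the joint limit $N=N_M\to\infty$) are correct observations that the paper leaves implicit.
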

\begin{proof}
This follows from Corollary~\ref{cor:feller-tail}\ref{cor:feller-tail-i}. Indeed~\eqref{eq:feller-tail-supercritical-theta} is~\eqref{eq:feller-tail-supercritical}  
applied to $\theta (N_M, M ) = \bPo ( \tau_{0,N_M} > M+1 )$. Similarly~\eqref{eq:weak-conditioning-moments}
is a consequence of~\eqref{eq:large-n-moments}
together with the facts that $\bEo \tau_{0,N} = N-1$
and $\bVar \tau_{0,N} = N (N-1)(N-2)/3$, given in Lemma~\ref{lem:mean-tau}.
\end{proof}

We will use the following exponential convergence result for triangular arrays.

\begin{lemma}
\label{lem:exponential-limit}
Let $K_M \in \ZP$ satisfy $\Pr ( K_M = k) = (1-p_M)^{k} p_M$ for $k \in \ZP$, where $p_M \in (0,1)$, and $\lim_{M \to \infty} p_M = 0$.
Suppose also that $Y_M,Y_{M,1}, Y_{M,2}, \ldots$ are i.i.d., $\RP$-valued, and independent of~$K_M$,
with $\Exp [ Y_M^2 ] = \sigma_M^2 < \infty$ and $\Exp Y_M = \mu_M >0$.
Let $Z_M := \sum_{i=1}^{K_M} Y_{M,i}$. Assuming that
\begin{equation}
\label{eq:exponential-parameter-condition}
 \lim_{M \to \infty} \frac{\sigma_M^2 p_M}{\mu_M^2} = 0,\end{equation}
it is the case that, as $M \to \infty$,
\[ \frac{p_M Z_M}{\mu_M} \tod \cExp .\]
\end{lemma}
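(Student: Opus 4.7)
The plan is to compute the Laplace transform of $p_M Z_M / \mu_M$ and show it converges to $1/(1+s)$, which is the Laplace transform of $\cExp$; the convergence of Laplace transforms on a neighbourhood of the origin (and positivity of the random variables) then gives $\tod \cExp$. For $t \geq 0$, let $\Mgf_M(t) := \Exp [ \re^{-t Y_M} ]$. Using independence of $K_M$ from the $Y_{M,i}$ and conditioning on $K_M$, together with the geometric distribution of $K_M$ on $\ZP$, I would write
\begin{equation*}
\Exp \bigl[ \re^{-t Z_M} \bigr] = \sum_{k=0}^\infty p_M (1-p_M)^k \Mgf_M (t)^k = \frac{p_M}{1 - (1-p_M) \Mgf_M(t)} ,
\end{equation*}
valid whenever $(1-p_M) \Mgf_M(t) < 1$, which will hold for $M$ large since $\Mgf_M(t) \leq 1$.

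The key step is the expansion of $\Mgf_M(s p_M/\mu_M)$ for fixed $s > 0$. Because $Y_M \geq 0$, the elementary bounds $1 - x \leq \re^{-x} \leq 1 - x + x^2/2$ (for $x \geq 0$) give
\begin{equation*}
\Mgf_M (t) = 1 - t \mu_M + \eps_M (t), \text{ with } 0 \leq \eps_M (t) \leq \tfrac{1}{2} t^2 \sigma_M^2 .
\end{equation*}
Setting $t = s p_M / \mu_M$ produces $\Mgf_M(t) = 1 - s p_M + \eps_M$ with $0 \leq \eps_M \leq (s^2/2) p_M^2 \sigma_M^2 / \mu_M^2$. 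Substituting into the denominator of the Laplace transform,
\begin{equation*}
1 - (1-p_M) \Mgf_M (t) = p_M \bigl( 1 + (1-p_M) s \bigr) - (1-p_M) \eps_M ,
\end{equation*}
so that
\begin{equation*}
\Exp \bigl[ \re^{- s p_M Z_M / \mu_M } \bigr] = \frac{1}{1 + (1-p_M) s - (1-p_M) \eps_M / p_M} .
\end{equation*}

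The hypothesis $\sigma_M^2 p_M / \mu_M^2 \to 0$ together with the bound on $\eps_M$ gives $\eps_M / p_M \leq (s^2/2) p_M \sigma_M^2 / \mu_M^2 \to 0$, and of course $p_M \to 0$. Hence the right-hand side converges to $1/(1+s)$ for every $s \geq 0$, which is $\Exp [ \re^{-s \cExp} ]$. By (say) Theorem~8.1.6 of~\cite{gut} on convergence of Laplace transforms of nonnegative random variables, this gives $p_M Z_M / \mu_M \tod \cExp$. The only delicate point is really the uniform control of the $O((t \sigma_M)^2)$ remainder in the expansion of $\Mgf_M$; this is made transparent by the one-sided elementary inequality above, which bypasses any need to assume more than a second moment on $Y_M$.
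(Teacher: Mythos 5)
Your argument is correct, and it is structurally the same as the paper's: both compute the transform of the compound geometric sum $Z_M$ in closed form via $\Exp[\phi(t)^{K_M}] = p_M/(1-(1-p_M)\phi(t))$, expand the per-summand transform to second order at the argument $sp_M/\mu_M$, and use hypothesis~\eqref{eq:exponential-parameter-condition} to make the second-order remainder negligible relative to $p_M$. The genuine difference is the choice of transform and how the remainder is controlled. The paper works with characteristic functions and cites the standard quantitative bound (equation~(3.3.3) of~\cite{durrett}) to get $|\Exp[\re^{itY_M}] - 1 - it\mu_M| \leq \tfrac12 t^2 \sigma_M^2$, concluding via L\'evy's continuity theorem; you work with Laplace transforms and exploit $Y_M \geq 0$ through the elementary pointwise inequalities $1-x \leq \re^{-x} \leq 1-x+x^2/2$, which yields the same remainder bound with no external input and with a one-signed error term. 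Your route is slightly more elementary and self-contained, at the cost of relying on the $\RP$-valuedness of $Y_M$ (which the lemma does assume); the paper's characteristic-function version is the one that would survive if the summands could be signed. Your algebra is correct throughout (note that $(1-p_M)\Exp[\re^{-tY_M}] \leq 1-p_M < 1$ holds for every $M$, not just large $M$, so the geometric series identity needs no caveat), and the concluding appeal to the continuity theorem for Laplace transforms of distributions on $\RP$ is legitimate; just double-check the precise theorem number you cite in~\cite{gut}, or cite the Laplace continuity theorem in~\cite{feller2} instead.
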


Lemma~\ref{lem:exponential-limit} can be deduced from e.g. Theorem~3.2.4 of~\cite[p.~85]{kal},
by verifying that the condition~\eqref{eq:exponential-parameter-condition} implies
the `uniformly weighted family' condition from~\cite[p.~44]{kal}. For convenience, we include a direct proof here.

\begin{proof}[Proof of Lemma~\ref{lem:exponential-limit}.]
Let $r_M := 1/p_M \in (0,\infty)$. 
For $s \in (0,\infty)$, we have 
\[ \Exp [ s^{K_M} ] = \frac{p_M}{1-(1-p_M) s} .\]
Write $\psi_M (t) := \Exp [ \re^{ i t Y_M} ]$, the characteristic function of $Y_M$.  
Then, for $t \in \R$,
\begin{align*}
\Exp [ \re^{ i tZ_M} ]
& = \Exp \left[ \Exp \biggl[ \exp \biggl\{ \sum_{j=1}^{K_M} i t Y_{M,j} \biggr\} \biggmid K_M \biggr] \right]  
 = \Exp \left[ (\psi_M (t) )^{K_M} \right] 
 = \frac{p_M}{1-(1-p_M) \psi_M (t) } .\end{align*}
Set $a_M := r_M \mu_M$. 
Thus, for $t \in \R$,
\begin{equation}
\label{eq:char-fn}
 \Exp [ \re^{i t Z_M/a_M} ]  
= \frac{1}{1 - (r_M-1) (\psi_M (t/a_M) -1) } .\end{equation}
Fix $t_0 \in (0,\infty)$.
The $n=1$ case of (3.3.3) in~\cite[p.~135]{durrett},
together with the facts that
$\Exp [ Y_M^2 ] = \sigma_M^2 < \infty$ and $\Exp Y_M = \mu_M$,
yields
\[ r_M \sup_{t \in [-t_0, t_0]} \left|   \psi_M ( t / a_M ) - 1 -  \frac{ i t}{r_M} \right| \leq | t_0 | \frac{\sigma_M^2}{\mu_M^2 r_M} ,\]
which tends to zero as $M \to \infty$, by~\eqref{eq:exponential-parameter-condition}.
Since $r_M \to \infty$, it follows that
$1 - (r_M-1) (  \psi_M (t/a_M)  - 1) = 1 -  i t + o(1)$, uniformly in $| t | \leq t_0$, and so,
by~\eqref{eq:char-fn}
\[ \lim_{M \to \infty} \Exp [ \re^{i t Z_M/a_M} ] = \frac{1}{1 - i t} , \]
for all $t$ in an open interval containing~$0$, and since $\Exp [ \re^{i t \cExp} ] = (1-it)^{-1}$
is the characteristic function of the unit-mean exponential distribution,
this completes the proof.
\end{proof}

\begin{proof}[Proof of Theorem~\ref{thm:confined-space}.]
To simplify notation, we write
$\theta_M := \theta (N_M, M)$.
First suppose that $X_0 = 1$.
Then in the representation $\sigma_\kappa = \sum_{i=1}^\kappa \nu_i$, 
Lemma~\ref{lem:excursion-law} shows that, given $\kappa = k \in \N$, $\nu_1, \ldots, \nu_k$ are i.i.d.~with the law of $Y_M$ as given by~\eqref{eq:Y-def},
and the law of $\kappa$ is $\PrfMo ( \kappa = k )  = (1-\theta_M)^k \theta_M$, for $k \in \ZP$.
Thus Lemma~\ref{lem:exponential-limit} applies to show that $\sigma_\kappa \to \cExp$ in distribution,
provided that~\eqref{eq:exponential-parameter-condition}
holds, where $p_M = \theta_M$ satisfies~\eqref{eq:feller-tail-supercritical-theta}, and, by~\eqref{eq:weak-conditioning-moments},
$\mu_M = \ExpfMo [ \nu \mid \nu  < \infty ] \sim N_M$ and 
$\sigma^2_M = \VarfMo [ \nu \mid \nu  < \infty ] \sim N_M^3/3$. Hence the quantity in~\eqref{eq:exponential-parameter-condition}
satisfies
\[ \frac{\sigma_M^2 p_M}{\mu_M^2 } = \frac{4}{3} (1+o(1)) \cos^M \left( \frac{\pi}{N_M} \right) \leq\frac{4}{3} (1+o(1)) \exp \left\{ - \frac{\pi^2 M}{2 N_M^2} \right\}
 ,\]
which tends to~$0$ provided that~\eqref{eq:confined-space} holds. Lemma~\ref{lem:exponential-limit}
then establishes~\eqref{eq:lifetime-confined-space} in the case where $\zeta_0 = (1,M)$. 
For general $z_M$ we have (by Lemma~\ref{lem:mean-tau}) that $\ExpfM [ \nu \mid \nu < \infty ] = O ( N_M^2)$,
and, since, by~\eqref{eq:confined-space}, $M /N_M^2 \to \infty$ and $y_M \geq \eps M$
for some $\eps >0$ and all $M$ large enough, 
it follows from the $a=0$, $y = \eps$ case of~\eqref{eq:weak-limit}
that $\theta_{z_M} (N_M, M) \to 0$ as $M \to \infty$.
Hence the first excursion does not change the limit behaviour.
\end{proof}

\subsection{The critical case}
\label{sec:proofs:critical}

Recall the definition of
 $H$ from~\eqref{eq:H-def}.

\begin{proposition}
\label{prop:critical-excursions}
Suppose that~\eqref{eq:critical} holds. 
Then, 
\begin{equation}
\label{eq:critical-theta} 
\theta ( N_M, M) = (4/N_M) (1+o(1)) H (\rho), \text{ as } M \to \infty. \end{equation}
Moreover, for any $s_0 \in (0,\infty)$, as $M \to \infty$, uniformly for $s \in (0,s_0]$,
\begin{equation}
\label{eq:critical-nu-mgf}
\ExpfMo [ \re^{s \nu / N_M^2} \mid \nu < \infty ]  = 1 + \frac{4 s}{N_M} (1+o(1))  \int_0^\rho \re^{s y}   \bigl( H (y) - H(\rho) \bigr) \ud y.
\end{equation}
\end{proposition}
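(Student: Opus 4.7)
The first assertion, $\theta(N_M,M) = (4/N_M)(1+o(1))H(\rho)$, falls out directly from Corollary~\ref{cor:feller-tail}\ref{cor:feller-tail-ii}: since $\theta(N_M,M) = \bPo(\tau_{0,N_M} > M+1)$ (as used in the proof of Proposition~\ref{prop:excursion-characteristics}) and $\rho_M := (M+1)/N_M^2 \to \rho$, choosing any $y_0 \in (0,\rho)$ places $\rho_M \in [y_0,\infty)$ for all large~$M$, so the uniform convergence on $[y_0,\infty)$ combined with continuity of $H$ (immediate from~\eqref{eq:H-def} as a uniformly convergent sum of smooth functions on any interval bounded away from~$0$) gives $(N_M/4)\theta(N_M,M) \to H(\rho)$.

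For~\eqref{eq:critical-nu-mgf}, I will adapt the argument in the proof of Proposition~\ref{prop:excursion-characteristics}. Writing $\tau := \tau_{0,N_M}$ and exploiting the renewal structure of Section~\ref{sec:proofs:general} (the initial state $(1,M)$ coincides with the post-boundary state, so $\nu$ conditional on $\{\nu < \infty\}$ has the law of $\tau$ conditional on $\{\tau \leq M+1\}$, up to a multiplicative factor $1+O(1/N_M^2)$ absorbed into the error), one has
\[
\ExpfMo[\re^{s\nu/N_M^2}\mid\nu<\infty] - 1 = \frac{\bEo[(\re^{s\tau/N_M^2}-1)\1{\tau\leq M+1}]}{\bPo(\tau\leq M+1)} + o(1/N_M).
\]
Applying the integration-by-parts identity~\eqref{eq:mgf-split-by-parts} with $X=\tau/N_M^2$, $a=0$, $b=\rho_M$, $g(y)=\re^{sy}-1$ yields
\[
\bEo[(\re^{s\tau/N_M^2}-1)\1{\tau\leq M+1}] = -(\re^{s\rho_M}-1)\bPo(\tau>M+1) + s\int_0^{\rho_M}\re^{sy}\bPo(\tau>yN_M^2)\,\ud y.
\]
Part~(i) and $\rho_M\to\rho$ show the first term contributes $-(4/N_M)(\re^{s\rho}-1)H(\rho)(1+o(1))$ uniformly in $s\in(0,s_0]$; combined with the trivial identity $(\re^{s\rho}-1)H(\rho) = s\int_0^\rho\re^{sy}H(\rho)\,\ud y$, this delivers~\eqref{eq:critical-nu-mgf}, provided that
\[
\frac{N_M}{4}\int_0^{\rho_M}\re^{sy}\bPo(\tau>yN_M^2)\,\ud y \longrightarrow \int_0^\rho\re^{sy}H(y)\,\ud y
\]
uniformly in $s\in(0,s_0]$, after which division by $\bPo(\tau\leq M+1) = 1-\theta(N_M,M) = 1+O(1/N_M)$ completes the argument.

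The main obstacle is this integral limit: Corollary~\ref{cor:feller-tail}\ref{cor:feller-tail-ii} supplies uniform convergence only on $[y_0,\infty)$ for $y_0>0$, whereas the integration extends to $y=0$, where $H(y)\sim(8\pi y)^{-1/2}$ (Lemma~\ref{lem:H}) is singular. My plan is to use dominated convergence: pointwise convergence of $(N_M/4)\re^{sy}\bPo(\tau>yN_M^2)$ to $\re^{sy}H(y)$ at each fixed $y>0$ is immediate from Corollary~\ref{cor:feller-tail}\ref{cor:feller-tail-ii}, and the classical bound $\bPo(\tau>n) \leq \bPo(\tau_0>n) \leq C/\sqrt{n}$ (from Lemma~\ref{lem:one-sided-approx}\ref{lem:one-sided-approx-i} together with~\eqref{eq:tau_0-tail}) furnishes the dominating function $(N_M/4)\re^{sy}\bPo(\tau>yN_M^2) \leq C\re^{s_0\rho}/(4\sqrt{y})$ on $\{y\geq 1/N_M^2\}$, while the sliver $y\in[0,1/N_M^2]$ contributes at most $(N_M/4)\re^{s_0\rho}/N_M^2 = O(1/N_M)$ to the integral and is absorbed into the error. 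Since $y \mapsto C\re^{s_0\rho}/\sqrt{y}$ is integrable on $[0,\rho]$, dominated convergence gives the limit pointwise in~$s$; uniformity in $s\in(0,s_0]$ then follows from the $s$-independent bound $|\text{difference}| \leq \re^{s_0\rho}\int_0^\rho|(N_M/4)\bPo(\tau>yN_M^2) - H(y)|\,\ud y$, which vanishes by a second application of dominated convergence to the $L^1$ difference (dominated by twice the same envelope). Finally, the discrepancy between integrating to $\rho_M$ versus $\rho$ is at most $\re^{s_0\rho_M}\bPo(\tau > (\rho\wedge\rho_M)N_M^2)|\rho_M - \rho| = o(1/N_M)$ since $|\rho_M - \rho| = o(1)$.
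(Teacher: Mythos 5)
Your proposal is correct and follows essentially the same route as the paper: both reduce to $\tau_{0,N_M}$ under $\bPo$, apply the integration-by-parts identity~\eqref{eq:mgf-split-by-parts} to produce the boundary term $-(\re^{s\rho_M}-1)\bPo(\tau>M+1)$ plus $s\int_0^{\rho_M}\re^{sy}\bPo(\tau>yN_M^2)\,\ud y$, and invoke Corollary~\ref{cor:feller-tail}\ref{cor:feller-tail-ii} together with the $O(N^{-1}y^{-1/2})$ tail bound to pass to the limit. The only difference is cosmetic: the paper splits the expectation at $\eps N_M^2$ and sends $\eps\downarrow 0$ by hand, whereas you perform a single integration by parts over $[0,\rho_M]$ and handle the integrable singularity of $H$ at the origin via dominated convergence with the envelope $Cy^{-1/2}$ --- the two devices are interchangeable here.
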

\begin{proof}
We have from Corollary~\ref{cor:feller-tail}\ref{cor:feller-tail-ii} that
$\bPo ( \tau_{0,N}/N^2 > y ) = (4/N) (1+o(1)) H (y)$, as $N \to \infty$,
uniformly in $y \geq y_0 >0$,  where $H$ is defined at~\eqref{eq:H-def}.
In particular, under condition~\eqref{eq:critical}, it follows from continuity of~$H$ that
 $\theta (N_M,M) = \bPo ( \tau_{0,N_M}/N_M^2 > (M +1)/N_M^2 ) = (1+o(1)) \bPo ( \tau_{0,N_M}/N_M^2 > \rho )$ satisfies~\eqref{eq:critical-theta}.

Let $\eps \in (0,\rho)$. Note that, for every $s \in \RP$,
\begin{align}
\label{eq:mgf-split}
 \ExpfMo \left[ \left( \re^{s \nu / N_M^2} -1 \right) \1 { \nu < \infty } \right]
&  = 
\bEo \left[ \left( \re^{s \tau_{0,N_M} / N_M^2} -1 \right) \1 { \eps N_M^2 \leq \tau_{0,N_M} \leq  M+1 } \right]
\nonumber\\ 
& {} \quad{}  +  \bEo \left[ \left( \re^{s \tau_{0,N_M} / N_M^2} -1 \right) \1 { \tau_{0,N_M} < \eps N_M^2 } \right].\end{align}
To estimate the second term on the right-hand side of~\eqref{eq:mgf-split}, we apply~\eqref{eq:mgf-split-by-parts} with~$X = \tau_{0,N_M} / N_M^2$, 
$g(y) = \re^{sy} - 1$,
$a = 0$, and $b = \eps$ to obtain, for every $s \in (0,\infty)$,
\[ \bEo \left[ \left| \re^{s \tau_{0,N_M} / N_M^2} -1 \right| \1 { \tau_{0,N_M} < \eps N_M^2 } \right]
\leq s \int_0^{\eps} \re^{sy} \bPo ( \tau_{0,N_M} > y N_M^2 ) \ud y .\]
Here, it follows from~\eqref{eq:one-sided-approx-2} that there exists $C < \infty$ such that, for all $N \in \N$,
\begin{equation}
\label{eq:uniform-tail-bound}
 \bPo ( \tau_{0,N} \geq y N^2 ) \leq \frac{C}{N} (1+y^{-1/2}) , \text{ for all } y >0.\end{equation}
Thus, for $0 \leq s \leq s_0 < \infty$ and all $\eps \in (0,1)$,
\begin{equation}
\label{eq:mgf-bound-1}
 \bEo \left[ \left| \re^{s \tau_{0,N_M} / N_M^2} -1 \right| \1 { \tau_{0,N_M} < \eps N_M^2 } \right]
\leq \frac{C s_0 \re^{s_0}}{N_M} \int_0^{\eps} (1+ y^{-1/2}) \ud y \leq \frac{C(s_0) \eps^{1/2}}{N_M} ,\end{equation}
where $C(s_0) < \infty$ depends on $s_0$ only. 
To estimate the first term on the right-hand side of~\eqref{eq:mgf-split}, we apply~\eqref{eq:mgf-split-by-parts} with~$X = \tau_{0,N_M} / N_M^2$, 
$g(y) = \re^{sy} - 1$, $a = \eps$, and $b = \rho_M := (M+1)/N_M^2 = \rho + o(1)$ to obtain
\begin{align}
\label{eq:mgf-bound-2}
 & {} \bEo \left[  \left( \re^{s \tau_{0,N_M} / N_M^2} -1 \right) \1 { \eps N_M^2 \leq \tau_{0,N_M} \leq  M+1 }  \right] 
= (\re^{s\eps}-1) \bPo ( \tau_{0,N_M} \geq \eps N_M^2 ) \nonumber\\
& \quad {} - (\re^{s\rho_M}-1) \bPo ( \tau_{0,N_M} > \rho_M N_M^2 )
 + s \int_\eps^{\rho_M} \re^{sy} \bPo ( \tau_{0,N_M} > y N_M^2 ) \ud y ; \end{align}
note that $\rho_M > \eps$ for all~$M$ sufficiently large, since $\eps < \rho$.
Since there is some $C < \infty$ for which $\re^{s\eps}-1 \leq C s_0 \eps$
for all $s \leq s_0$ and all $\eps \in (0,1)$, we have from~\eqref{eq:uniform-tail-bound} that 
the first term on the right-hand side of~\eqref{eq:mgf-bound-2} satisfies
\[ 0 \leq (\re^{s\eps}-1)\bPo ( \tau_{0,N_M} \geq \eps N_M^2 ) \leq \frac{C(s_0) \eps^{1/2}}{N_M}, \]
where, again, $C(s_0) < \infty$ depends on $s_0$ only. Corollary~\ref{cor:feller-tail}\ref{cor:feller-tail-ii} implies that, for any fixed $\eps \in (0,1)$,
\[ \int_\eps^{\rho_M} \re^{sy}  \bPo ( \tau_{0,N_M} > y N_M^2 ) \ud y
= \frac{4}{N_M} (1+o(1)) \int_\eps^\rho \re^{sy} H (y) \ud y ,\]
and
\[  (\re^{s\rho_M}-1) \bPo ( \tau_{0,N_M} > \rho_M N_M^2 ) = \frac{4}{N_M} (1+o(1)) (\re^{s\rho}-1) H(\rho) .\]
Hence, combining~\eqref{eq:mgf-split} with~\eqref{eq:mgf-bound-1} and~\eqref{eq:mgf-bound-2}, and taking $\eps \downarrow 0$, we obtain
\[ \lim_{M \to \infty} \sup_{0<s \leq s_0} \left| \frac{N_M}{4}   \ExpfMo \left[ \left( \re^{s \nu / N_M^2} -1 \right) \1 { \nu < \infty } \right]  - s \int_0^\rho \re^{sy} \bigl( H (y) - H(\rho) \bigr) \ud y \right| = 0 .\]
Finally, we observe that
\[ \ExpfMo [ \re^{s \nu / N_M^2} - 1 \mid \nu < \infty ]  = \frac{ \ExpfMo \left[ \left( \re^{s \nu / N_M^2} -1 \right) \1 { \nu < \infty } \right]}{\PrfMo ( \nu < \infty )} ,\]
and $\PrfMo ( \nu < \infty ) = 1- \theta (N_M,M) \to 1$, by~\eqref{eq:theta-def} and~\eqref{eq:critical-theta}. Now~\eqref{eq:critical-nu-mgf} follows.
\end{proof}

\begin{lemma}
\label{lem:H}
The function  $H$ from~\eqref{eq:H-def} satisfies the following.
\begin{enumerate}[label=(\roman*)]
\item
\label{lem:H-i} As $y \downarrow 0$, $ 2 H(y) \sqrt{2 \pi y} \to 1$.
\item
\label{lem:H-ii}
 As $y \to \infty$, $H(y) = (1+o(1)) \re^{-\pi^2 y /2}$.
\end{enumerate}
\end{lemma}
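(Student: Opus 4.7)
Part \ref{lem:H-ii} is the straightforward direction. The plan is to isolate the leading summand: for $k \geq 2$, one has $h_k(y)/h_1(y) = \exp\{-\pi^2((2k-1)^2-1)y/2\} \leq \exp\{-4\pi^2 y\}$, and a crude geometric bound gives $\sum_{k \geq 2} h_k(y) \leq h_1(y) \cdot O(e^{-4\pi^2 y})$, whence $H(y) = (1+o(1)) e^{-\pi^2 y/2}$ as $y \to \infty$.

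Part \ref{lem:H-i} is the substantive case, and the plan is to reduce $H$ to a Jacobi theta sum via an odd/even parity decomposition. Set $\Theta(y) := \sum_{n=1}^\infty e^{-\pi^2 n^2 y/2}$. Splitting by the parity of $n$ and observing that the even-$n$ contribution, reindexed via $n = 2m$, reproduces $\Theta(4y)$, yields the identity $\Theta(y) = H(y) + \Theta(4y)$. Hence it suffices to obtain small-$y$ asymptotics for $\Theta$ and take the difference.

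The key analytic input is the classical Jacobi theta functional equation (Poisson summation applied to a Gaussian): for $t > 0$,
\[ \sum_{n \in \Z} e^{-\pi t n^2} = t^{-1/2} \sum_{n \in \Z} e^{-\pi n^2 / t}. \]
Applying this with $t = \pi y / 2$, the left-hand side is $1 + 2\Theta(y)$, while on the right-hand side the $n = 0$ term equals $\sqrt{2/(\pi y)}$ and the $n \neq 0$ terms are exponentially small, of order $e^{-2/y}/\sqrt{y}$. Solving for $\Theta$ gives $\Theta(y) = (2\pi y)^{-1/2} - 1/2 + O(e^{-2/y}/\sqrt{y})$ as $y \downarrow 0$. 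Substituting into $H(y) = \Theta(y) - \Theta(4y)$, the constant $-1/2$ terms cancel and we obtain
\[ H(y) = \frac{1}{\sqrt{2\pi y}} - \frac{1}{\sqrt{8\pi y}} + O \bigl( e^{-1/(2y)}/\sqrt{y} \bigr) = \frac{1}{2\sqrt{2\pi y}} (1 + o(1)), \]
which is equivalent to the claim $2 H(y) \sqrt{2\pi y} \to 1$. The only real subtlety is the invocation of the Jacobi identity; once this is in place, the parity trick reduces everything to elementary bookkeeping on rapidly decaying Gaussian tails, so no further obstacles arise.
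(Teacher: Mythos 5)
Your proof is correct, and for part \ref{lem:H-i} it takes a genuinely different route from the paper's. The paper handles part \ref{lem:H-i} by pure integral comparison: since $k \mapsto h_k(y)$ is decreasing, $H(y)$ is sandwiched between $\int_1^\infty \re^{-\pi^2(2t-1)^2 y/2}\,\ud t$ and that integral plus $1$, and the substitution $z=(2t-1)\pi\sqrt{y}$ turns the integral into a Gaussian tail equal to $(1+o(1))/(2\sqrt{2\pi y})$; the additive error $1$ is $o(y^{-1/2})$, so the claim follows with no external input. Your argument instead passes through the parity identity $H(y)=\Theta(y)-\Theta(4y)$ and the Jacobi theta functional equation (Poisson summation). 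Both are sound; your reindexings and the cancellation of the $-\tfrac12$ constants check out, and the error bookkeeping ($O(\re^{-1/(2y)}/\sqrt{y})$ coming from the $\Theta(4y)$ term) is right. What your route buys is a much sharper remainder — an exponentially small error rather than the paper's $O(1)$ — which is more than the lemma needs but would matter if one wanted higher-order asymptotics of $H$ near $0$; what the paper's route buys is self-containedness, needing only monotonicity and one Gaussian integral. Your treatment of part \ref{lem:H-ii} (isolate $h_1$ and bound the tail geometrically) is essentially identical to the paper's.
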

\begin{proof}
Write $c := \pi^2 /2$, so that~\eqref{eq:H-def} reads $H(y) = \sum_{k=1}^\infty \re^{-c (2k-1)^2 y}$. 
Then, 
\[ 0 \leq H(y) - \re^{-cy} \leq \re^{-9cy} \sum_{k=0}^\infty \re^{-c [ (2k+3)^2 -9 ] y} \leq \re^{-9cy} \sum_{k=0}^\infty \re^{-16c k y} ,\]
and hence $0 \leq H(y) - \re^{-\pi^2 y /2} \leq  \re^{- 4 \pi^2 y}$ for all $y$ sufficiently large.
This yields part~\ref{lem:H-ii}. On the other hand, since $k \mapsto h_k (y)$ is strictly decreasing for fixed $y >0$,
\[ \int_1^\infty \re^{- c (2 t -1)^2 y} \ud t \leq  H (y)  \leq 1 + \int_1^\infty \re^{- c (2 t -1)^2 y} \ud t ,\]
where, by the change of variable $z = (2t-1) \sqrt{2 cy}$,
\[ \int_1^\infty \re^{- c (2 t -1)^2 y} \ud t = \frac{1}{2 \pi \sqrt{y}} \int_{\sqrt{2cy}}^\infty \re^{- z^2 /2} \ud z = \frac{1+o(1)}{2 \sqrt{2\pi y}} ,
\]
as $y \downarrow 0$, which gives part~\ref{lem:H-i}.
\end{proof}

\begin{proof}[Proof of Theorem~\ref{thm:critical}.]
Let $s > 0$, and recall the definition of $\Mgf (s)= \ExpfMo [ \re^{s \nu} \mid \nu < \infty]$ from~\eqref{eq:phi-def}. Then, as at~\eqref{eq:Xi-gf-s}, 
\[ 
\ExpfMo \bigl[ \re^{s \sigma_\kappa / M} \bigr]
= \frac{\theta_M}{1-(1-\theta_M) \psi_M ( s / M ) } , \text{ provided } (1-\theta_M) \psi_M ( s / M ) < 1, \]
where $\theta_M := \theta (N_M, M)$. Combining~\eqref{eq:critical-theta}  and~\eqref{eq:critical-nu-mgf}, 
with the fact that $M \sim \rho N_M^2$, by~\eqref{eq:critical}, we conclude that
\begin{equation}
\label{eq:critical-mgf}  
\lim_{M \to \infty} \ExpfMo \bigl[ \re^{s \sigma_\kappa / M} \bigr]  = \frac{1}{1 - G (\rho, s)},  \text{ provided } G (\rho, s) < 1,
\end{equation} 
where $G$ is defined at~\eqref{eq:G-def}.
Since $\lambda = M +1 + \sigma_\kappa$, by~\eqref{eq:lambda-sigma}, it follows from~\eqref{eq:critical-mgf}  that~$\lambda / M$ converges in distribution to $1 + \xi_\rho$,
where $\Exp [ \re^{s\xi_\rho} ] = \phi_\rho (s)$ as defined at~\eqref{eq:phi-rho-def}. 

From~\eqref{eq:G-def}, we have that, for fixed $\rho >0$, 
$s \mapsto G(\rho, s)$ is differentiable on $s \in (0,\infty)$, 
with derivative $G' ( \rho, s) := \frac{\ud}{\ud s} G (\rho , s)$ satisfying
\[ G' (\rho, 0^+) := \lim_{s \downarrow 0} G' (\rho , s ) = \frac{1}{H ( \rho)} \int_0^1 \bigl( H ( v \rho ) - H(\rho) \bigr) \ud v
= \mu ( \rho) ,\]
as given by~\eqref{eq:mu-def}, as we see from the change of variable $y =  \rho v$.
Hence $\phi_\rho$ is differentiable for $s \in (0,s_\rho)$, with derivative $\phi'_\rho (s) = \frac{G'(\rho,s)}{(1-G(\rho,s))^2}$, and
\[  \phi'_\rho (0^+) := \lim_{s \downarrow 0} \phi'_\rho (s) =  G' (\rho, 0^+) =  \mu(\rho). \]
The convergence of the moment generating function in~\eqref{eq:critical-mgf} in the region $s \in (0,s_0)$
implies convergence also of the mean, $\lim_{M \to \infty} M^{-1} \ExpfMo  \sigma_\kappa 
= \phi_\rho (0^+) = \mu (\rho)$, and hence $\lim_{M \to \infty} M^{-1} \ExpfMo \lambda 
= 1 + \mu (\rho)$. The asymptotics for~$\mu$ stated in~\eqref{eq:mu-asymptotics}  follow from~\eqref{eq:mu-def} with the asymptotics for $H$ in Lemma~\ref{lem:H},
together with the observation that
\[ \int_0^\infty H(y) \ud y = \sum_{k=1}^\infty \int_0^\infty h_k (y)\ud y = \frac{2}{\pi^2} \sum_{k=1}^\infty \frac{1}{(2k-1)^2} = \frac{1}{4},\]
using Fubini's theorem. This completes the proof.
\end{proof}

\section{Concluding remarks}
\label{sec:conclusion}

To conclude, we identify a number of potentially interesting open problems.
\begin{itemize}
\item Staying in the context of the $M$-capacity models, we expect that at least the non-critical
results of the present paper are rather universal, and should extend
to more general domains, and to more general diffusion dynamics within a suitable class. For example,
we expect that for a large class of energy-constrained random walk models in a domain of diameter $N$ with energy capacity $M$,
the regimes $M \ll N^2$ (meagre capacity) and $M \gg N^2$ (confined space) should lead to
Darling--Mandelbrot and exponential limits, respectively, for the total lifetime.
\item The total lifetime is just one statistic associated with the model: other quantities that it would be of interest to study include the location of the
walker on extinction.
\item The model presented in Section~\ref{sec:general-model}
includes several other natural models, in addition to the particular finite-capacity model
with total replenishment that we study here. For example, one could consider a model of infinite capacity
and a random replenishment distribution. In models with unbounded energy, 
there may be positive probability that the walker survives for ever (i.e., transience).
\end{itemize}

\appendix

\section{Darling--Mandelbrot distribution}
\label{sec:kummer}

Recall the definition of the function $K : \R \to \R$ and the set $\cT$ from~\eqref{eq:kummer}.
Write $M(a,b,t)$ for the Kummer function defined for $a , b \notin -\N$ by the convergent series
\[ M(a,b,t) := \frac{\Gamma(b)}{\Gamma(a)} \sum_{\ell=0}^\infty \frac{\Gamma (a+\ell)}{\Gamma (b+\ell)} \frac{t^\ell}{\ell!}, \text{ for } t \in \R ; \]
see~\cite[Chapter~13]{as} or~\cite[pp.~647--8]{bosa}. Then, by~\eqref{eq:kummer}, $K (t) = M (-\frac{1}{2}, \frac{1}{2}, t)$. This identification is the basis for the following
 facts.

\begin{lemma}
\phantomsection
\label{lem:kummer}
  \begin{enumerate}[label=(\alph*)]
\item
\label{lem:kummer-a}
 As $t \to \infty$, we have $K(-t) \sim \sqrt{\pi t}$ and $K(t) \sim - \re^t/(2t)$.
\item 
\label{lem:kummer-b}
The function $K$ is infinitely differentiable, with $K^{(\ell)} (t) := (\ud^\ell / \ud t^\ell ) K (t) < 0$ for every $\ell \in \N$ and all $t \in \R$,
and $K^{(\ell)} (0) = -1/(2\ell -1)$.
\item 
\label{lem:kummer-c}
We have that $\cT = (-\infty, t_0)$ where $t_0$ is uniquely determined by $\sum_{\ell=1}^\infty \frac{t_0^\ell}{(2\ell -1) \cdot \ell!} =1$; numerically,
$t_0 \approx 0.8540326566$.
\item
\label{lem:kummer-d}
With $\cI$ as defined at~\eqref{eq:E-def}, it is the case that
\begin{equation}
\label{eq:K-E}
 K(t) = \re^t - \cI (  t  ) , \text{ for all } t \in \R. 
\end{equation}
\end{enumerate}
\end{lemma}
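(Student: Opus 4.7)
The plan is to prove parts (d), (b), (a), (c) in that order, since (d) furnishes an integral representation that drives all subsequent arguments. For (d), I would expand $\re^{ut} = \sum_{k \geq 0}(ut)^k/k!$ inside the integrand of $\cI(t)$ and integrate term-by-term using $\int_0^1 u^{k-1/2}\ud u = 2/(2k+1)$; reindexing with $\ell = k+1$ gives $\cI(t) = \sum_{\ell \geq 1} 2\ell \, t^\ell/((2\ell-1)\ell!)$, and subtracting from $\re^t = \sum_{\ell \geq 0} t^\ell/\ell!$ via the identity $1 - 2\ell/(2\ell-1) = -1/(2\ell-1)$ identifies the difference as $K(t)$. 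Absolute convergence of the series uniformly in $u \in [0,1]$ justifies the interchange for every $t \in \R$.

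For part (b), direct differentiation of the defining series of $K$ at $t = 0$ yields $K^{(\ell)}(0) = -1/(2\ell-1)$. To obtain the sign statement on all of $\R$, I would derive the integral representation
\[
K^{(\ell)}(t) = -\int_0^1 u^{2\ell-2}\re^{t u^2}\ud u, \qquad \ell \in \N,\ t \in \R,
\]
by starting from $-K^{(\ell)}(t) = \sum_{k \geq 0} t^k/((2k+2\ell-1)k!)$, inserting $1/(2k+2\ell-1) = \int_0^1 v^{2k+2\ell-2}\ud v$, swapping sum and integral, and summing the resulting exponential series; negativity of $K^{(\ell)}$ on all of $\R$ is then transparent from the positivity of the integrand.

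For part (a), I would evaluate $\cI$ asymptotically using the representation from (d). For $K(-t) \sim \sqrt{\pi t}$ as $t \to \infty$, substituting $v = ut$ in $\cI(-t) = -t\int_0^1 u^{-1/2}\re^{-ut}\ud u$ gives $\cI(-t) = -\sqrt{t}\int_0^t v^{-1/2}\re^{-v}\ud v \to -\sqrt{\pi t}$ via $\int_0^\infty v^{-1/2}\re^{-v}\ud v = \Gamma(1/2)$; hence $K(-t) = \re^{-t} - \cI(-t) \sim \sqrt{\pi t}$. For $K(t) \sim -\re^t/(2t)$ as $t \to +\infty$, the substitution $v = (1-u)t$ gives $\cI(t) = \re^t\int_0^t(1-v/t)^{-1/2}\re^{-v}\ud v$; expanding $(1-v/t)^{-1/2} = 1 + v/(2t) + O(v^2/t^2)$ on the bulk region $v \leq t/2$, absorbing the tail $v \in [t/2, t]$ (where the singularity lies) into an $O(t\re^{-t/2})$ remainder, and using $\int_0^\infty v\re^{-v}\ud v = 1$, yields $\cI(t) = \re^t(1 + 1/(2t) + O(t^{-2}))$, whence $K(t) = \re^t - \cI(t) \sim -\re^t/(2t)$. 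I expect this $t \to +\infty$ step to be the most delicate part of the proof, since the leading $\re^t$ must be cancelled exactly against $\cI(t)$ before the $1/t$ correction can be extracted, requiring controlled handling of the integrable singularity of $u^{-1/2}$ near $u = 0$ (equivalently $v$ near $t$).

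For (c), parts (a) and (b) show that $K$ is strictly decreasing on $\R$ with $\lim_{t \to -\infty}K(t) = +\infty$ and $\lim_{t \to +\infty}K(t) = -\infty$, so by the intermediate value theorem $K$ has a unique real root $t_0$, and $\cT = \{K > 0\} = (-\infty, t_0)$. The equation $K(t_0) = 0$ rearranges to the series condition $\sum_{\ell \geq 1} t_0^\ell/((2\ell-1)\ell!) = 1$ in the statement, and the numerical value $t_0 \approx 0.8540326566$ is a routine root-finding exercise on any sufficiently long truncation of the convergent series defining $K$.
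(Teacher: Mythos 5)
Your proposal is correct, and it takes a genuinely different route from the paper. The paper's proof is essentially a sequence of citations: it identifies $K(t) = M(-\tfrac12,\tfrac12,t)$ as a Kummer function and then reads part~(a) off the standard asymptotics (13.1.4)--(13.1.5) of Abramowitz--Stegun, part~(b) off the derivative formula (13.4.9) together with the integral representation (13.2.1) for positivity, and part~(d) off the contiguous relations (13.4.4) and (13.6.12); part~(c) is then deduced from monotonicity exactly as you do. You instead build everything from scratch: you prove~(d) first by term-by-term integration of the series for $\re^{ut}$ in $\cI(t)$, derive the explicit representation $K^{(\ell)}(t) = -\int_0^1 v^{2\ell-2}\re^{tv^2}\,\ud v$ for~(b) (which is, up to the substitution $u=v^2$, precisely the content of (13.2.1) for these parameters, so you have in effect reproved the relevant special case), and extract the asymptotics in~(a) directly from the integral for $\cI$ by the substitutions $v=ut$ and $v=(1-u)t$ with a bulk/tail split. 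I checked the computations: the reindexing $\cI(t)=\sum_{\ell\ge1}2\ell\,t^\ell/((2\ell-1)\ell!)$ and the cancellation $1-2\ell/(2\ell-1)=-1/(2\ell-1)$ are right, the derivative series $-K^{(\ell)}(t)=\sum_{k\ge0}t^k/((2k+2\ell-1)k!)$ is right, and the $t\to+\infty$ expansion $\cI(t)=\re^t(1+\tfrac1{2t}+O(t^{-2}))$ with the tail contribution $O(t\re^{-t/2})$ is handled correctly, including the integrable singularity at $u=0$. What each approach buys: the paper's is shorter and delegates all analytic work to a standard reference; yours is self-contained, makes the negativity of every $K^{(\ell)}$ transparent from a manifestly positive integrand, and would survive in a context where one does not wish to invoke the confluent hypergeometric literature, at the cost of the somewhat delicate cancellation of the leading $\re^t$ term in part~(a).
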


\begin{remark}
\label{rem:csaki}
The constant $t_0$ in Lemma~\ref{lem:kummer}\ref{lem:kummer-c} also 
appears in relation to asymptotics of~$T_n$, the maximum excursion duration over the first $n$ steps of a simple symmetric random walk 
on $\Z$. It was shown by Cs\'aki, Erd\H os and R\'ev\'esz~\cite[Thm.~1]{cer} that
\[ \liminf_{n \to \infty} \left[ \frac{\log \log n}{n} T_n \right] = t_0, \as  \]
See also~\cite{lh,revesz} for some neighbouring results.
\end{remark}

\begin{proof}[Proof of Lemma~\ref{lem:kummer}.]
The $t \to \pm \infty$ asymptotics can be read off from~(13.1.4) and~(13.1.5) in~\cite{as}.
The derivatives of $K$ are obtained from equation (13.4.9) in~\cite{as} as
\[ \frac{\ud^\ell}{\ud t^\ell} K(t) = \frac{\Gamma \left( \tfrac{1}{2} \right) \Gamma \left( \ell - \tfrac{1}{2} \right) }{\Gamma \left( -\tfrac{1}{2} \right) \Gamma \left( \ell + \tfrac{1}{2} \right) } M \left( \ell - \frac{1}{2} , \ell + \frac{1}{2} , t \right)
= - \frac{1}{2\ell -1 } M \left( \ell - \frac{1}{2} , \ell + \frac{1}{2} , t \right) .\]
Thus $K^{(\ell)} (0) = -1/(2\ell -1)$. 
For $b > a > 0$, 
the integral representation~(13.2.1) in~\cite{as} shows that $M (a,b,t) > 0$ for all $t \in \R$, and hence $(\ud^\ell / \ud t^\ell ) K (t) < 0$ for all $\ell \in \N$, as claimed.
In particular, $K$ is monotone decreasing, with $K(0) = 1$, so there is a unique $t_0 \in \R$ with $K(t_0) = 0$,
one has $t_0 > 0$, 
and $K(t) > 0$ if and only if $t < t_0$. Finally, equations~(13.4.4) and~(13.6.12) in~\cite{as} show that
\[ K (t) = M \left(-\frac{1}{2}, \frac{1}{2} , t \right) = M \left(\frac{1}{2}, \frac{1}{2} , t \right) - 2 t M \left( \frac{1}{2}, \frac{3}{2} , t \right) = \re^t - 2 t M \left( \frac{1}{2}, \frac{3}{2} , t \right) .\]
Moreover, since $\frac{3}{2} > \frac{1}{2} >0$,  equation~(13.2.1) in~\cite{as} shows that
$\cI (t) = 2 t M( \frac{1}{2}, \frac{3}{2} , t )$. This verifies~\eqref{eq:K-E}.
\end{proof}

\begin{lemma}
\label{lem:xi-moments}
The moments $\upsilon_k := \Exp[ \xi^k ]$ $(k\in\N)$ of the random variable~$\xi$ defined by~\eqref{eq:xi-transform} 
are determined uniquely by the recursion
\begin{equation}
\label{eq:xi-moments-recursion}
\upsilon_k = \sum_{j=1}^k \binom{k}{j} \frac{\upsilon_{k-j}}{2j-1} , \text{ for } k \in \N,
\end{equation}
with the initial condition $\upsilon_0 = 1$. In particular, the first few moments are as follows:
\begin{center}
{\normalfont\setlength\extrarowheight{2pt}
\begin{tabular}{c|cccccc}
$k$ & 1 & 2 & 3 & 4 & 5 & 6 \\
\hline
$\upsilon_k$ & $1$ & $\frac{7}{3}$ & $\frac{41}{5}$ & $\frac{4033}{105}$ & $\frac{14167}{63}$ & $\frac{1824719}{1155}$ 
 \end{tabular}}
\end{center}
\end{lemma}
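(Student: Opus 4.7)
The plan is to work directly with the defining identity $K(t)\,\phidm(t) = 1$, coming from~\eqref{eq:xi-transform} and~\eqref{eq:kummer}, and to match coefficients of a power series expansion about~$t=0$. Since Lemma~\ref{lem:kummer}\ref{lem:kummer-c} says $K$ is strictly positive on the open interval $(-\infty,t_0)$ with $t_0>0$, the function $\phidm = 1/K$ is real-analytic in a neighbourhood of the origin. Consequently, $\Exp \re^{t\xi} = \phidm(t) < \infty$ for $|t|$ small, which by a standard result (e.g.~\cite[p.~242]{gut}) implies that $\xi$ has finite moments of all orders and that
\[
\phidm(t) \;=\; \sum_{k=0}^\infty \frac{\upsilon_k}{k!}\, t^k
\]
as convergent power series near $0$, with $\upsilon_0 = \Exp[\xi^0]=1$.

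Next, I would substitute the power series for $K$ from~\eqref{eq:kummer} and for $\phidm$ above into the identity $K(t)\,\phidm(t) = 1$. Rearranging gives
\[
\phidm(t) - 1 \;=\; \biggl(\sum_{\ell=1}^\infty \frac{t^\ell}{(2\ell-1)\cdot \ell!}\biggr) \phidm(t) .
\]
The coefficient of $t^k$ on the left-hand side is $\upsilon_k/k!$ for $k \geq 1$, while on the right-hand side the Cauchy product yields coefficient $\sum_{j=1}^k \frac{1}{(2j-1)\,j!}\cdot \frac{\upsilon_{k-j}}{(k-j)!}$. Equating coefficients and multiplying through by $k!$ delivers the recursion
\[
\upsilon_k \;=\; \sum_{j=1}^k \binom{k}{j}\,\frac{\upsilon_{k-j}}{2j-1}, \qquad k \in \N,
\]
which is~\eqref{eq:xi-moments-recursion}. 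The recursion together with $\upsilon_0=1$ clearly determines $\upsilon_k$ uniquely for every $k \in \N$.

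Finally, the tabulated values follow by direct iteration of~\eqref{eq:xi-moments-recursion}: for example, $\upsilon_1 = \upsilon_0 = 1$; $\upsilon_2 = 2\upsilon_1 + \upsilon_0/3 = 7/3$; $\upsilon_3 = 3\upsilon_2 + \upsilon_1 + \upsilon_0/5 = 41/5$; and so on. There is no substantive obstacle here: the only point requiring care is the justification that $\phidm$ admits a genuine Taylor expansion at the origin (so that coefficient-matching is legitimate), which is provided by Lemma~\ref{lem:kummer}\ref{lem:kummer-c} via $t_0>0$.
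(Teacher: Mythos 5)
Your proposal is correct and follows essentially the same route as the paper: both start from the identity $K(t)\phidm(t)=1$ near $t=0$ and extract the order-$k$ information, the paper by applying the Leibniz rule and evaluating at $t=0$ using $K^{(j)}(0)=-1/(2j-1)$, and you by the equivalent device of matching Cauchy-product coefficients of the two power series. Your explicit justification that $\phidm$ is analytic near $0$ (via $t_0>0$) and that the Taylor coefficients are the moments is a welcome, if routine, addition that the paper leaves implicit.
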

\begin{proof}
From~\eqref{eq:xi-transform} and~\eqref{eq:K-E}, we have that $\phidm (t) = \Exp [ \re^{t\xi} ] = 1 / K(t)$,
where, for $\ell \in \N$, $K^{(\ell)} (0) = - 1/(2\ell -1)$,
as shown in Lemma~\ref{lem:kummer}\ref{lem:kummer-b}. Differentiating $k \in \N$ times the equality $K(t) \phidm (t) = 1$, we obtain
\[ \sum_{j=0}^k \binom{k}{j} K^{(j)} (t) \phidm^{(k-j)} (t) = 0 , \text{ for } k \in \N. \]
Taking $t=0$ and noting that $\nu_k = \phidm^{(k)} (0)$ and $K^{(0)} (0) = K(0) = 1$ gives the result.
\end{proof}

\section*{Acknowledgements}
\addcontentsline{toc}{section}{Acknowledgements}

The authors are grateful for the comments and suggestions of two anonymous referees,
  in particular, their encouragement
to present the heuristics in Section~\ref{sec:heuristics} and their
suggestions for future work, which we incorporated into Section~\ref{sec:conclusion}.
The work of AW was supported by EPSRC grant EP/W00657X/1.


\begin{thebibliography}{99}
\addcontentsline{toc}{section}{References}

\bibitem{aldous}
D.\ Aldous,
\emph{Probability Approximations via the Poisson Clumping Heuristic}.
Springer-Verlag, New York, 1989.

\bibitem{ah}
J.\ And\v el and S.\ Hudecov\'a, 
Variance of the game duration in the gambler's ruin problem. 
\emph{Statist.\ Probab.\ Lett.}\ {\bf 82} (2012) 1750--1754.

\bibitem{ab} 
D.Z.\ Arov and A.A.\ Bobrov, 
The extreme terms of a sample and their role in the sum of independent variables. 
\emph{Theory Probab.\ Appl.}\ {\bf 5} (1960) 377--396. 

\bibitem{as} M.\ Abramowitz and I.A.\ Stegun (Eds.)\  
\emph{Handbook of Mathematical Functions}, National Bureau of Standards,
Applied Mathematics Series, {\bf 55}. U.S.\ Government Printing
Office, Washington D.C., 1965.

\bibitem{bach}
E.\ Bach, 
Moments in the duration of play.
\emph{Statist.\ Probab.\ Lett.}\ {\bf 36} (1997) 1--7.

\bibitem{bs}
A.\ Bacher and A.\ Sportiello,
Complexity of anticipated rejection algorithms and the Darling--Mandelbrot distribution.
\emph{Algorithmica} {\bf 75} (2016) 812--831.

\bibitem{baak}
V.\ Balakrishnan, E.\ Abad, T.\ Abil, and J.J.\ Kozak,
First-passage properties of mortal random walks: Ballistic behavior, effective reduction of dimensionality, and scaling functions for hierarchical graphs.
\emph{Phys.\ Rev.\ E} {\bf 99} (2019) 062110.

\bibitem{bbkr}
O.\ B\'enichou, U.\ Bhat, P.L.\ Krapivsky, and S.\ Redner,
Optimally frugal foraging.
\emph{Phys.\ Rev.\ E} {\bf 97} (2018) 022110.

\bibitem{bcr}
O.\ B\'enichou, M.\ Chupeau, and S.\ Redner,
Role of depletion on the dynamics of a diffusing forager.
\emph{J.\ Phys.\ A: Math.\ Theor.}\ {\bf 49} (2016) 394003.

\bibitem{bl}
J.M.\ Berbert and M.A.\ Lewis,
Superdiffusivity due to resource depletion in random searches.
\emph{Ecological Complexity} {\bf 33} (2018) 41--48.

\bibitem{brfm}
P.D.\ Boersma, G.A.\ Rebstock, E.\ Frere, and S.E.\ Moore,
Following the fish: Penguins and productivity in the South Atlantic.
\emph{Ecological Monographs} {\bf 79} (2009) 59--76.

\bibitem{bosa}
A.N.\ Borodin and P.\ Salminen,
\emph{Handbook of Brownian Motion -- Facts and Formulae.} 2nd ed., Birkh\"auser, Basel, 2002.

\bibitem{cr}
  L.A.\ Caffarelli and J.-M.\ Roquejoffre, A nonlinear oblique
  derivative boundary value problem for the heat equation: Analogy
  with the porous medium equation. \emph{Ann.\ I.\ H.\ Poincar\'e} {\bf
    19} (2002) 41--80.  

\bibitem{cbr}
M.\ Chupeau, O.\ B\'enichou, and S.\ Redner,
Universality classes of foraging with resource renewal.
\emph{Phys.\ Rev.\ E} {\bf 93} (2016) 032403.

\bibitem{codling}
E.A.\ Codling, M.J.\ Plank, and S.\ Benhamou,
Random walk models in biology.
\emph{J.\ R.\ Soc.\ Interface} {\bf 5} (2008) 813--834.

\bibitem{cer}
E.\ Cs\'aki, P.\ Erd\H os, and P.\ R\'ev\'esz,
On the length of the longest excursion.
\emph{Z.\ Wahrsch.\ Verw.\ Gebiete} {\bf 68} (1985) 365--382.

\bibitem{darling}
D.A.\ Darling, 
The influence of the maximum term in the addition of independent random variables. 
\emph{Trans.\ Amer.\ Math.\ Soc.}\ {\bf 73} (1952) 95--107.

\bibitem{durrett}
R.\ Durrett,
\emph{Probability: Theory and Examples}.
5th ed., 
 Cambridge University Press, Cambridge, 2019.

\bibitem{feller1}
W.\ Feller, 
\emph{An Introduction to Probability Theory and Its Applications, Vol.\ I.}
3rd ed., John Wiley \& Sons, Inc., New York, 1968.

\bibitem{feller2}
W.\ Feller, 
\emph{An Introduction to Probability Theory and Its Applications, Vol.\ II.}
2nd ed., John Wiley \& Sons, Inc., New York, 1971.

\bibitem{gphm}
M.J.\ Garlick, J.A.\ Powell, M.B.\ Hooten, and L.R.\ McFarlane,
Homogenization of large-scale movement models in ecology.
\emph{Bull.\ Math.\ Biol.}\ {\bf 73} (2011) 2088--2108.

\bibitem{gb}
L.\ Giuggioli and F.\ Bartumeus,
Animal movement, search strategies, and behavioural ecology: A cross-disciplinary way forward.
\emph{J.\ Animal Ecology} {\bf 79} (2010) 906--909.

\bibitem{grebenkov}
D.S.\ Grebenkov,
Depletion of resources by a population of diffusing species.
\emph{Phys.\ Rev.\ E} {\bf 105} (2022) 054402.

\bibitem{gurarie}
E.\ Gurarie, C.H.\ Fleming, W.F.\ Fagan, K.L.\ Laidre, J.\ Hern\'andez-Pliego, and O.\ Ovaskainen, 
Correlated velocity models as a fundamental unit of animal movement: Synthesis and applications.
\emph{Movement Ecology} {\bf 5} (2017) 13.

\bibitem{gut}
A.\ Gut, \emph{Probability: A Graduate Course}. Springer, New York, 2005.

\bibitem{hjmm}
M.B.\ Hooten, D.S.\ Johnson, B.T.\ McClintock, and J.M.\ Morales,
\emph{Animal Movement: Statistical Models for Telemetry Data}.
CRC Press, 2017.

\bibitem{jpe}
A.\ James, M.J.\ Plank, and A.M.\ Edwards,
Assessing L\'evy walks as models of animal foraging.
\emph{J.\ R.\ Soc.\ Interface} {\bf 8} (2011) 1233--1247.

\bibitem{kal}
V.\ Kalashnikov,
\emph{Geometric Sums: Bounds for Rare Events with Applications}.
Kluwer, Dordrecht, 1997.

\bibitem{lew}
J.S.\ Lew,
On the Darling--Mandelbrot probability density and the zeros of some incomplete Gamma functions.
\emph{Constr.\ Approx.}\ {\bf 10} (1994) 15--30.

\bibitem{lh}
A.\ Lindell and L.\ Holst,
Distributions of the longest excursions in a tied down simple random walk and in a Brownian bridge.
\emph{J.\ Appl.\ Probab.}\ {\bf 44} (2007) 1056--1067.

\bibitem{louchard}
G.\ Louchard,
Asymptotic properties of some underdiagonal walks generation algorithms.
\emph{Theoretical Computer Sci.}\ {\bf 218} (1999) 249--262.

\bibitem{mr} 
F.\ Mehats and J.-M.\ Roquejoffre, 
A nonlinear oblique derivative boundary value problem for the heat equation. Part I: Basic results.  
\emph{Ann.\ I.\ H.\ Poincar\'e} {\bf 16} (1999) 221--253.

\bibitem{mgm}
S.\ Merino, M.\ Grinfeld, and S.\ McKee, 
A degenerate reaction diffusion system modelling an optical biosensor. 
\emph{Z.\ Angew.\ Math.\ Phys.}\ {\bf 49} (1998) 46--85.

\bibitem{pl}
H.H.T.\ Prins and F.\ Langevelde (eds.) 
\emph{Resource Ecology.} 
Springer, 2008.

\bibitem{revesz}
P.\ R\'ev\'esz,
Long excursions and iterated processes. 
pp.~243--249 in \emph{Asymptotic Methods in Probability and Statistics}, ed.~B.~Szyszkowicz. 
North-Holland, Amsterdam, 1998. 

\bibitem{sby}
D.W.\ Stephens, J.S.\ Brown, and R.C.\ Ydenberg.
\emph{Foraging: Behavior and Ecology}.
University of Chicago Press, 2007.

\bibitem{svh}
F.W.\ Steutel and K.\ van Harn,
\emph{Infinite Divisibility of Probability Distributions on the Real Line}.
Marcel Dekker, New York, 2004.

\bibitem{tw}
Y.\ Tao and M.\ Winkler,
Global classical solutions to a doubly haptotactic cross-diffusion system modeling oncolytic virotherapy.
\emph{J.\ Differential Equations} {\bf 268} (2020) 4973--4997.

\bibitem{td}
D.M.\ Tartakovsky and M.\ Dentz,
Diffusion in porous media: Phenomena and mechanisms.
\emph{Transport in Porous Media} {\bf 130} (2019) 105--127.

\bibitem{tilles}
P.F.C.\ Tilles, S.V.\ Petrovskii, and P.L.\ Natti,
A random walk description of individual animal movement accounting for periods of rest.
\emph{R.\ Soc.\ Open Sci.}\ {\bf 3} (2016) 160566.

\bibitem{vlrs}
G.M.\ Viswanathan,  M.G.E.\ da Luz, E.P.\ Raposo, and H.E.\ Stanley.
\emph{The Physics of Foraging.}
Cambridge University Press, Cambridge, 2011.

\bibitem{wato}
Y.A.\ Wato, H.H.T.\ Prins, I.M.A.\ Heitk\"onig, G.M.\ Wahungu, S.M.\ Ngene, S.\ Njumbi, and F.\ van Langevelde,
Movement patterns of African
elephants (\emph{Loxodonta africana}) in a semi-arid savanna suggest that they have information on the location of dispersed water sources.
\emph{Front.\ Ecol.\ Evol.}\ {\bf 6} (2018) 167.

\bibitem{ykr}
J.D.\ Yeakel, C.P.\ Kempes, and S.\ Redner, 
Dynamics of starvation and recovery predict extinction risk and both Damuth's law and Cope's rule.
\emph{Nature Communications} {\bf 9} (2018) 657.

\end{thebibliography}
\end{document}